\newtheorem{tm}{Theorem}[section]
\newtheorem{rk}{Remark}[section]
\newtheorem{ap}{Assumption}[section]
\newtheorem{df}{Definition}[section]
\newtheorem{prop}{Proposition}[section]
\newtheorem{lm}{Lemma}[section]
\newtheorem{cor}{Corollary}[section]
\newtheorem{ex}{Example}[section]
\newtheorem{Con}{Conjecture}[section]
\newcommand{\ee}{\mathbb E}
\newcommand{\ff}{\mathbb F}
\newcommand{\pp}{\mathbb P}
\newcommand{\nn}{\mathbb N}
\newcommand{\rr}{\mathbb R}
\newcommand{\hh}{\mathbb H}
\newcommand{\zz}{\mathbb Z}
\newcommand{\br}{\mathbf r}
\newcommand{\bs}{\mathbf s}
\newcommand{\BB}{\mathcal B}
\newcommand{\CC}{\mathcal C}
\newcommand{\LL}{\mathcal L}
\newcommand{\TT}{\mathcal T}
\newcommand{\OO}{\mathcal O}
\newcommand{\PP}{\mathcal P}
\newcommand{\OOO}{\mathscr O}
\newcommand{\FFF}{\mathscr F}
\newcommand{\<}{\langle}
\renewcommand{\>}{\rangle}
\allowdisplaybreaks \allowdisplaybreaks[4]
\journalname{Stoch PDE: Anal Comp}
\begin{document}


\title{Strong Approximation of Monotone Stochastic Partial Differential Equations Driven by Multiplicative Noise
\thanks{}
}

\titlerunning{Strong Approximation of Monotone SPDEs Driven by Multiplicative Noise}        

\author{
Zhihui Liu \and Zhonghua Qiao 
}


\institute{Zhihui Liu \at
              Department of Mathematics, \\
The Hong Kong University of Science and Technology, \\
Clear Water Bay, Kowloon, Hong Kong \\
              \email{zhliu@ust.hk}
               \\
           \and
           Zhonghua Qiao \at
              Department of Applied Mathematics, \\
The Hong Kong Polytechnic University, \\
Hung Hom, Kowloon, Hong Kong \\
              \email{zhonghua.qiao@polyu.edu.hk}           
\thanks{}
           }

\date{Received: 12 December 2018 / Revised: 22 January 2020 \\ 
https://doi.org/10.1007/s40072-020-00179-2 }

\maketitle

\begin{abstract}
We establish a general theory of optimal strong error estimation for numerical approximations of a second-order parabolic stochastic partial differential equation with monotone drift driven by a multiplicative infinite-dimensional Wiener process.
The equation is spatially discretized by Galerkin methods and temporally discretized by drift-implicit Euler and Milstein schemes. 
By the monotone and Lyapunov assumptions, we use both the variational and semigroup approaches to derive a spatial Sobolev regularity under the $L_\omega^p L_t^\infty \dot H^{1+\gamma}$-norm and a temporal H\"older regularity under the $L_\omega^p L_x^2$-norm for the solution of the proposed equation with an $\dot H^{1+\gamma}$-valued initial datum for $\gamma\in [0,1]$. 
Then we make full use of the monotonicity of the equation and tools from stochastic calculus to derive the sharp strong convergence rates $\OOO(h^{1+\gamma}+\tau^{1/2})$ and 
$\OOO(h^{1+\gamma}+\tau^{(1+\gamma)/2})$ for the Galerkin-based Euler and Milstein schemes, respectively.
\keywords{monotone stochastic partial differential equation
\and stochastic Allen--Cahn equation
\and Galerkin finite element method
\and Euler scheme
\and Milstein scheme
}
\subclass{65M60 \and 60H15 \and 60H35}
\end{abstract}

\maketitle

\linenumbers 


\section{Introduction}
\label{sec1}

There exists a general theory of strong error estimation for numerical approximations of stochastic partial differential equations (SPDEs) with Lipschitz coefficients; see, e.g., \cite{CHL17(SINUM)}, \cite{JR15(FCM)}, \cite{Kru14(SPDE)}, \cite{Kru14(IMA)} and references therein.
For SPDEs with non-Lipschitz coefficients driven by multiplicative noises, there are only a few results on strong approximations.
Since these SPDEs cannot be solved explicitly, one might need to develop efficient numerical techniques to study them. 
Depending on particular physical models, it may be necessary to design numerical schemes for the underlying SPDEs with strong convergence rates, i.e., rates in $L^p(\Omega)$ concerning the sample variables for some $p\ge 1$, see, e.g., \cite{BFRS16(SINUM)} and references quoted therein.

To deal with the nonlinearity of SPDEs, one usually applies the truncation technique, which only produces convergence rate in certain sense such as in probability or pathwise which is weaker than strong sense, see, e.g., \cite{KLL15(JAP)} for the stochastic Allen--Cahn equation and \cite{Liu13(SINUM)} for the stochastic nonlinear Schr\"odinger equation.
A few recent contributions on strong error estimations for numerical approximations of particular types of SPDEs with non-Lipschitz coefficients by using the so-called exponential integrability for both the exact and numerical solutions.
However, such exponential integrability is only shown to hold for several special SPDEs.
In fact, up to our knowledge, only the two-dimensional stochastic Navier--Stokes equation \cite{Dor12(SINUM)}, the one-dimensional stochastic Burgers equation and Cahn--Hilliard--Cook equation \cite{HJ14}, and the one-dimensional stochastic nonlinear Schr\"odinger equation \cite{CHL17(JDE)}, \cite{CHLZ19(JDE)} were proved to possess this type of exponential integrability.

\subsection{Stochastic Allen--Cahn Equation}

Currently, several works are performed to give strong error estimations for semidiscrete or fully discrete schemes of the stochastic Allen--Cahn equation (or stochastic Ginzburg--Landau equation) driven by an additive white or colored noises:
\begin{align} \label{ac}
{\rm d} u=(\Delta u+u-u^3) {\rm d}t
+{\rm d}W(t),
\quad u(0)=u_0.
\end{align}
A major feature of the nonlinear drift term in Eq. \eqref{ac} is that the following monotone (more precisely, one-sided Lipschitz) condition holds: 
\begin{align} \label{ac-mon}
& [(x-x^3)-(y-y^3)] (x-y) \le C (x-y)^2,
\quad x,y\in \rr.
\end{align}
This type of stochastic equation, arising from phase transition in materials science by stochastic perturbation such as impurities of the materials, has been extensively studied in the literature; see, e.g., \cite{BGJK17}, \cite{BJ19(SPA)}, \cite{LQ19(IJNAM)}, \cite{LQ20(IMA)}, \cite{Wan18} for one-dimensional equation with space-time white noise and 
\cite{BCH19(IMA)}, \cite{CH19(SINUM)}, \cite{KLL18(MN)}, \cite{QW19(JSC)} for $d=1,2,3$-dimensional equation with colored noises.
The main ingredient of these approaches in all of the aforementioned papers is the full use of the additive nature of the noise in Eq. \eqref{ac}.

The case of multiplicative noise is more subtle and challenging.
For the $d=1,2,3$-dimensional stochastic Allen--Cahn equation under periodic boundary condition driven by a multiplicative $K\in \nn_+$-dimensional Wiener process 
$\beta=(\beta_1,\cdots, \beta_K)$:
\begin{align} \label{eq-pro}
{\rm d} u=(\Delta u+u-u^3) {\rm d}t
+\sum_{k=1}^K g_k(u) {\rm d}\beta_k(t),
\quad u(0)=u_0,
\end{align}
the author in \cite{Pro14} proved that the drift-implicit Euler--Galerkin finite element scheme \eqref{full0} with $F(u_h^{m+1})=u_h^{m+1}-(u_h^{m+1})^3$ possesses the strong convergence rate
\begin{align*}
& \sup_{0\le m\le M} \|u(t_m)-u_h^m \|_{L_\omega^2 L_x^2} 
=\OOO (h^{\frac13-\epsilon}+\tau^{\frac12-\epsilon} ),
\end{align*}
and sharpened to $h^{1-\epsilon}$ for $d\le 2$ with an infinitesimal factor 
$\epsilon$, under certain smooth and bounded assumptions on $(g_k)$ and the assumption that $u_0\in H^2$.
When $g=g_1 \in \CC_b^2$ (in the case $K=1$), this convergence rate was improved to 
\begin{align} \label{mp}
& \sup_{0\le m\le M} \|u(t_m)-u_h^m \|_{L_\omega^2 L_x^2}  
=\OOO (h+\tau^{\frac12-\epsilon} ),
\end{align}
by \cite{MP18(CMAM)} for a modified scheme of \eqref{full0} where the nonlinear drift term $F(u_h^{m+1})$ was replaced by 
$\frac12({u_h^{m+1}+u_h^m})(1-(u_h^{m+1})^2)$.
We also refer to \cite{GM09(PA)} for a theory of strong approximations of a class of quasilinear, monotone SPDEs driven by finite-dimensional Wiener processes with certain regularity conditions assumed for the solution.

It is an interesting and difficult problem to generalize the above strong error estimations for numerical approximations of Eq. \eqref{eq-pro} driven by an infinite-dimensional Wiener process.
On the other hand, as is well-known that the finite element method possesses an optimal convergence rate for deterministic parabolic PDEs with Lipschitz coefficients, the following conjecture arises naturally.

\begin{Con} \label{Q1}
The drift-implicit Euler--Galerkin scheme \eqref{full0} of the stochastic Allen--Cahn equation driven by a multiplicative infinite-dimensional ${\bf Q}$-Wiener process  (Eq. \eqref{eq-pro} with $K=\infty$) with mild assumptions on the diffusion coefficients satisfies
\begin{align*}
& \sup_{0\le m\le M} \|u(t_m)-u_h^m \|_{L_\omega^2 L_x^2}
=\OO (h^\bs+\tau^\frac12 ),
~~ \text{provided} ~~ u_0\in \dot H^\bs
~~ \text{for}~~ \bs=1, 2.
\end{align*}
\end{Con}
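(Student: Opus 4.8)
The plan is to prove the conjecture — in fact in the sharper interpolated form $\OOO(h^{1+\gamma}+\tau^{1/2})$ for $\gamma\in[0,1]$, so that $\bs=1+\gamma$ bridges the two endpoints $\bs=1,2$ — by combining sharp regularity estimates for the exact solution with an error analysis that exploits the one-sided Lipschitz structure \eqref{ac-mon} directly, thereby bypassing the exponential integrability that is unavailable for \eqref{eq-pro}. First I would establish well-posedness and moment bounds of the variational solution $u$ under the monotone and Lyapunov (coercivity) assumptions through a Galerkin plus Gy\"ongy--Krylov argument; then I would upgrade to the spatial Sobolev regularity $\|u\|_{L_\omega^p L_t^\infty \dot H^{1+\gamma}}<\infty$ by inserting these bounds into the mild (semigroup) representation and using the smoothing property $\|(-\Delta)^\theta e^{t\Delta}\|\lesssim t^{-\theta}$ of the analytic semigroup together with the Nemytskii/embedding estimates for the cubic drift. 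The temporal H\"older regularity $\|u(t)-u(s)\|_{L_\omega^p L_x^2}\lesssim |t-s|^{1/2}$ would then follow by bounding the semigroup, drift, and stochastic-convolution increments separately, the exponent $1/2$ being dictated by the stochastic convolution.

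With these estimates in hand I would split the total error by the triangle inequality into a spatial (Galerkin) part $\|u-u_h\|$ and a temporal (time-stepping) part $\|u_h-u_h^m\|$, where $u_h$ denotes the continuous-in-time Galerkin approximation. For the spatial part I would apply It\^o's formula to $\|u(t)-u_h(t)\|_{L_x^2}^2$, splitting through the $L^2$- or Ritz projection $P_h u$, so that the drift difference $\<F(u)-F(u_h),u-u_h\>$ is bounded from above by $C\|u-u_h\|_{L_x^2}^2$ via \eqref{ac-mon}, while the Laplacian supplies the negative $\|\nabla(u-u_h)\|^2$ needed to absorb cross terms. Combining the projection estimate $\|u-P_h u\|_{L_x^2}\lesssim h^{1+\gamma}\|u\|_{\dot H^{1+\gamma}}$ with the Lipschitz bound on the diffusion coefficient $G$ controlled through the Burkholder--Davis--Gundy inequality, a Gronwall argument then yields the spatial rate $\OOO(h^{1+\gamma})$.

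For the temporal part I would test the scheme \eqref{full0} against the error $e^m=u_h(t_m)-u_h^m$ and derive a discrete energy estimate. The drift-implicit treatment of $F$ is essential here: monotonicity lets the term $\<F(u_h(t_{m+1}))-F(u_h^{m+1}),e^{m+1}\>$ be bounded above by $C\|e^{m+1}\|^2$ with no one-step inverse-Lipschitz blow-up, producing a contribution of the right sign for the discrete Gronwall lemma. The one-step consistency error of the implicit Euler step would be measured with the temporal H\"older regularity of $u_h$, and the martingale increments $\int_{t_m}^{t_{m+1}}(G(u_h)-G(u_h^m))\,\mathrm dW$ controlled by Burkholder--Davis--Gundy, giving the noise-limited rate $\tau^{1/2}$ for the Euler scheme; for the Milstein variant the extra iterated-integral correction cancels the leading stochastic consistency term and, after using the additional regularity of $G$, lifts the rate to $\tau^{(1+\gamma)/2}$.

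The main obstacle I expect lies in the interaction between the infinite-dimensional multiplicative noise and the merely one-sided Lipschitz cubic drift. Since exponential integrability is not at hand, the uniform-in-$h$ and uniform-in-$\tau$ high-order moment bounds on the numerical solutions — which keep the Gronwall constants finite — must instead be extracted purely from the Lyapunov/coercivity structure of the scheme, and one must ensure the discrete energy inequalities close without sacrificing a power of $h$ or $\tau$. A secondary subtlety is attaining the full $\dot H^{1+\gamma}$ spatial regularity up to the endpoint $\gamma=1$ (i.e.\ $\bs=2$): the cubic term $u^3$ caps the achievable regularity unless the initial datum is smooth enough and the Nemytskii map is handled in the correct interpolation space, so this case will require the sharpest combination of semigroup smoothing with the a priori $L_t^\infty\dot H^1$ bound.
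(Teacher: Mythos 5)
Your regularity phase (variational well-posedness with monotone/Lyapunov structure, mild-formulation bootstrap to $\dot H^{1+\gamma}$, temporal H\"older exponent $1/2$ in $L_\omega^p L_x^2$) matches the paper's Theorems \ref{tm-h1}, \ref{tm-hol} and Proposition \ref{prop-h1+}, and your discrete energy argument for the time-stepping stage --- implicit treatment of $F$, the monotone pairing bounded by $C\|e^{m+1}\|^2$, martingale increments via It\^o isometry, discrete Gr\"onwall --- is essentially the second half of the paper's proof of Theorem \ref{main1}. But your overall decomposition is genuinely different: the paper never introduces the semidiscrete solution $u_h(t)$. It instead compares $u(t_m)$ with the auxiliary process $\widetilde u_h^m$ of \eqref{aux1} (the fully discrete recursion driven by exact-solution inputs), controlling $u(t_m)-\widetilde u_h^m$ through the space-time error operator $E_{h,\tau}$ of Lemma \ref{lm-eht}, and then controls $\widetilde u_h^m-u_h^m$ variationally. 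One payoff of the paper's route is that the needed bound \eqref{aux1-h1} on the auxiliary process follows immediately from the exact solution's regularity and the stability \eqref{shm}, whereas your route additionally owes uniform-in-$h$ moment bounds and uniform-in-$h$ temporal H\"older regularity for $u_h$ --- provable, but an extra layer of work.

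The genuine gap is in your spatial stage. Applying It\^o's formula to $\|u(t)-u_h(t)\|^2$ produces the quadratic-variation term $\int_0^t\|G(u(r))-\PP_h G(u_h(r))\|_{\LL_2^0}^2\,{\rm d}r$, which after splitting contains $\|({\rm Id}-\PP_h)G(u(r))\|_{\LL_2^0}^2$; under the paper's Assumption \ref{ap-g} one only has $G(u)\in\LL_2^1$ by \eqref{con-g1}, so this term is $\OOO(h^2)$ and the energy identity caps the spatial rate at $\OOO(h)$ --- the Ritz projection helps the drift and Laplacian terms but not the noise, because no semigroup smoothing is available inside an energy estimate. The paper obtains $h^{1+\gamma}$ for $\gamma\in[0,1)$ in the term $J_3^{m+1}$ of Lemma \ref{lm-u-uhm} precisely by routing the stochastic-convolution error through $E_{h,\tau}$ with \eqref{eht1} at $(\mu,\nu)=(1+\gamma,1)$ and integrating the resulting singularity $r^{-\gamma}$; the time-integrated smoothing is what converts $\LL_2^1$-regularity of $G(u)$ into the rate $h^{1+\gamma}$, and even this fails at the endpoint $\gamma=1$, where the extra growth condition of Assumption \ref{ap-g2} (via \eqref{j32}) is required --- consistent with the conjecture's caveat of ``mild assumptions on the diffusion coefficients,'' but absent from your sketch. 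So as written your plan proves only $\OOO(h+\tau^{1/2})$, i.e., the $\bs=1$ endpoint; to reach $\bs=2$ you must either replace the energy-based spatial comparison by a mild/semigroup comparison as in Lemma \ref{lm-u-uhm}, or impose $\LL_2^{1+\theta}$-regularity on $G$ and track it through the projection error, which is exactly the role Assumption \ref{ap-g2} plays in the paper.
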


The above conjecture imposes the homogeneous Dirichlet boundary condition on the equation; such question can be readily modified to cover other self-adjoint boundary conditions, such as periodic and homogeneous Neumann boundary conditions.

\subsection{Main Motivations}

The proposed Conjecture \ref{Q1} is one of our main motivations for this study.
In the finite-dimensional case, provided that the drift function 
$f$ satisfies one-sided Lipschitz continuity and polynomial growth conditions and the diffusion function $g$ satisfies global Lipschitz condition (see Remark \ref{rk-hms}), it was shown in \cite{HMS02(SINUM)} that the drift-implicit Euler scheme (i.e., the so-called backward Euler scheme) applied to the stochastic ordinary differential equation (SODE)
\begin{align} \label{sode} 
{\rm d}u(t) &=f(u(t)) {\rm d}t+g(u(t)) {\rm d} \beta(t),
\end{align}
is stable and possesses the standard strong order $1/2$. 
A general result for the infinite-dimensional case under the aforementioned conditions is still unknown and remains an open problem.
This problem forms our second motivation for this study.
 
Our main purpose in the present paper is giving a general theory of optimal strong error estimations for numerical approximations of semilinear SPDEs with a monotone drift which grows at most polynomially such that similarity of the one-sided Lipschitz condition \eqref{ac-mon} holds. 
We focus on the following second-order parabolic SPDE:
\begin{align}\label{see-fg}
{\rm d} u(t,x)  
=(\Delta u(t,x)+f(u(t,x))) {\rm d}t
+g(u(t,x)) {\rm d}W(t,x),
\end{align}
on the physical time-space domain $(t,x)\in \rr_+ \times \OOO$, where 
$\OOO\subset \rr^d$ ($d=1,2,3$) is a bounded domain with smooth  boundary. 
Eq. \eqref{see-fg} is subject to the following initial value and homogeneous Dirichlet boundary condition:
\begin{align}\label{dbc}
u(t,x)=0,\quad (t,x)\in \rr_+\times \partial \OOO; \quad
u(0,x)=u_0(x), \quad x\in \OOO.
\end{align}
Here the drift function $f$ is only assumed to be of monotone-type with polynomial growth which includes the case $f(x)=x-x^3$ for $x\in \rr$, $g$ satisfies the usual Lipschitz condition in infinite-dimensional setting (see Assumptions \ref{ap-f}-\ref{ap-g}), and 
$\{W(t): t\ge 0\}$ is an infinite-dimensional $\bf Q$-Wiener process in a stochastic basis $(\Omega, \FFF, \ff, \pp)$.

To study Eq. \eqref{see-fg}--\eqref{dbc}, we consider an equivalent infinite-dimensional stochastic evolution equation (SEE) 
\begin{align}\label{see} 
{\rm d}u(t) &=(Au(t)+F(u(t))) {\rm d}t+G(u(t)) {\rm d}W(t);
\quad u(0)=u_0,
\end{align}
where $A$ is the Dirichlet Laplacian, $F$ and $G$ are Nemytskiii operators associated to $f$ and $g$, respectively; see Section \ref{sec2.2} for details.
We will use different settings for the solutions of Eq. \eqref{see} and show that they are essentially equivalent (see Lemma \ref{lm-sol}).
Our first main purpose is to derive the optimal strong convergence rate of the drift-implicit Euler--Galerkin finite element scheme (see the scheme \eqref{full} in Section \ref{sec5.1})
\begin{align}\label{full0} 
u_h^{m+1}
=u_h^m+\tau A_h u_h^{m+1}
+\tau \PP_h F(u_h^{m+1})
+\PP_h G(u_h^m) (W(t_{m+1})-W(t_m)),  
\end{align}
with the initial value $u_h^0=\PP_h u_0$ applied to Eq. \eqref{see}.

Besides the optimality of spatial Galerkin approximations, the last motivation is to construct a temporal higher-order scheme for Eq. \eqref{see}.
To construct a higher-order scheme for an SODE \eqref{sode}, a popular and impressively effective numerical scheme is the Milstein scheme based on It\^o--Taylor expansion on the diffusion term. 
Recently, such infinite-dimensional analog of Milstein scheme had been investigated by \cite{JR15(FCM)}, \cite{Kru14(SPDE)} for SPDEs with Lipschitz coefficients which fulfill a certain commutativity type condition. 
Our second main purpose is to derive the sharp strong convergence rate of the Milstein--Galerkin finite element scheme (see the scheme \eqref{milstein} in Section \ref{sec5.2})
\begin{align}\label{milstein0} 
&U_h^{m+1} =U_h^m+\tau A_h U_h^{m+1}
+\tau \PP_h F(U_h^{m+1})
+\PP_h G(U_h^m) (W(t_{m+1})-W(t_m)) \nonumber \\
&\qquad \qquad +\PP_h DG(U_h^m) G(U_h^m) 
\Big[\int_{t_m}^{t_{m+1}} (W(r)-W(t_m)) {\rm d}W(r) \Big], 
\end{align}
with the initial value $u_h^0=\PP_h u_0$ applied to Eq. \eqref{see}.

\subsection{Main Ideas and Results}

The main aim in the present paper is to derive the optimal strong convergence rates of the Galerkin-based Euler or Milstein schemes \eqref{full0} and \eqref{milstein0}, respectively, and thus give a positive answer to Conjecture \ref{Q1} for a more general Eq. \eqref{see-fg}--\eqref{dbc} with merely monotone drift which has polynomial growth and more general initial datum which belongs to $\dot H^{1+\gamma}$ for any $\gamma\in [0,1]$.
To answer Conjecture \ref{Q1}, there are two major difficulties.
The first one is the sharp trajectory Sobolev as well as H\"older regularity for the solution of Eq. \eqref{see}.
Another one is to deal with the non-Lipschitz drift term as well as the multiplicative noise term. 

To overcome the first difficulty, we combine semigroup framework and factorization method with variational framework and monotone condition.
Precisely, we utilize the well-posedness theory for monotone SEEs from \cite{Liu13(JDE)} to conclude the $\dot H^1$-well-posedness and moments' estimation of Eq. \eqref{see} under Assumptions \ref{ap-f}-\ref{ap-g} (see Theorem \ref{tm-h1}).
Such variational method could not handle the $\dot H^2$-wellposedness of Eq. \eqref{see} since the second derivative of $f$ has no upper bound under Assumption \ref{ap-f}; see Remark \ref{rk-h2} for more details.
To get over this obstacle, we use a bootstrapping argument and factorization method to lift the $\dot H^{1+\gamma}$-regularity for any 
$\gamma\in [0,1]$ (see Proposition \ref{prop-h1+} and Theorem \ref{tm-h2}).

To overcome another difficulty for the drift-implicit Euler--Galerkin finite element scheme \eqref{full0}, we introduce an auxiliary process \eqref{aux1} and combine the factorization method with the theory of error estimation for Euler--Galerkin scheme (see Lemma \ref{lm-u-uhm}).
For the Milstein--Galerkin finite element scheme \eqref{milstein0}, we apply the It\^o--Taylor expansion to lift the temporal convergence rate between an analogous auxiliary process \eqref{aux2} and the exact solution of Eq. \eqref{see} (see Lemma \ref{lm-u-Uhm}).
By making full use of the variational method and the one-sided Lipschitz condition \eqref{con-f}, we reduce the convergence rate between the aforementioned two auxiliary processes and the Galerkin-based fully discrete Euler or Milstein schemes to the convergence rate between the auxiliary processes and the exact solution of Eq. \eqref{see}.

Our main results are the following strong convergence rates for both the drift-implicit Euler--Galerkin finite element scheme \eqref{full0} and the Milstein--Galerkin finite element scheme \eqref{milstein0} of Eq. \eqref{see}. 

\begin{tm} \label{main1}
Let $\gamma\in [0,1)$, $u_0\in \dot H^{1+\gamma}$, and Assumptions \ref{ap-f}-\ref{ap-g} hold.
Let $u$ and $u_h^m$ be the solutions of Eq. \eqref{see} and \eqref{full0}, respectively.
Then  
\begin{align}  \label{uhm} 
\sup_{m\in \zz_M} \|u(t_m)-u_h^m \|_{L_\omega^2 L_x^2} 
&=\OO (h^{1+\gamma}+\tau^\frac12 ).
\end{align}
Assume furthermore that $u_0\in \dot H^2$ and Assumption \ref{ap-g2} holds, then \eqref{uhm} is valid with $\gamma=1$.
\end{tm}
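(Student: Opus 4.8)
The plan is to interpolate between the exact solution and the scheme by the auxiliary process $v_h^m$ from \eqref{aux1}, and to bound the two resulting pieces by complementary techniques:
\[
\|u(t_m)-u_h^m\|_{L_\omega^2 L_x^2}
\le \|u(t_m)-v_h^m\|_{L_\omega^2 L_x^2}
+\|v_h^m-u_h^m\|_{L_\omega^2 L_x^2}.
\]
Here $v_h^m$ is the drift-implicit discrete process whose drift and diffusion inputs are evaluated at the exact solution $u$ at the grid nodes rather than at the numerical iterate; it admits a discrete mild (semigroup) representation, which is exactly what makes the factorization method applicable to it.

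First I would estimate $\|u(t_m)-v_h^m\|_{L_\omega^2 L_x^2}$ by Lemma \ref{lm-u-uhm}, obtaining the full rate $\OO(h^{1+\gamma}+\tau^{1/2})$. The spatial part $h^{1+\gamma}$ is produced by the deterministic finite element error bound for the discrete analytic semigroup generated by $A_h$, fed by the spatial Sobolev regularity $u\in L_\omega^p L_t^\infty \dot H^{1+\gamma}$ from Proposition \ref{prop-h1+} and Theorem \ref{tm-h2}. The temporal part $\tau^{1/2}$ comes from the $1/2$-order temporal H\"older regularity of $u$ in $L_\omega^p L_x^2$, which governs the error of freezing the coefficients over each subinterval $[t_m,t_{m+1}]$; the factorization method is precisely the device that converts the smoothing of the stochastic convolution into these trajectory estimates.

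The stability piece $\|v_h^m-u_h^m\|_{L_\omega^2 L_x^2}$ is where the monotone structure is used. Setting $e^m:=v_h^m-u_h^m$, subtracting \eqref{full0} from the recursion defining $v_h^m$, testing against $e^{m+1}$, and invoking the coercivity of $-A_h$ gives
\[
\|e^{m+1}\|_{L_x^2}^2
\le \langle e^m,e^{m+1}\rangle
+\tau\langle F(u(t_{m+1}))-F(u_h^{m+1}),e^{m+1}\rangle
+\langle (G(u(t_m))-G(u_h^m))\Delta W_m,e^{m+1}\rangle,
\]
with $\Delta W_m:=W(t_{m+1})-W(t_m)$. The decisive manipulation is to split the drift difference as
\[
F(u(t_{m+1}))-F(u_h^{m+1})
=\big(F(v_h^{m+1})-F(u_h^{m+1})\big)
+\big(F(u(t_{m+1}))-F(v_h^{m+1})\big).
\]
Because the drift is treated implicitly, the first bracket tested against $e^{m+1}=v_h^{m+1}-u_h^{m+1}$ is controlled by the one-sided Lipschitz condition \eqref{con-f} by $C\tau\|e^{m+1}\|_{L_x^2}^2$, with no penalty from the polynomial growth of $F$. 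The second bracket involves only $u(t_{m+1})-v_h^{m+1}$, the auxiliary-to-exact distance already controlled above. In the noise term one splits $G(u(t_m))-G(u_h^m)$ through $u(t_m)-u_h^m=(u(t_m)-v_h^m)+e^m$, extracts the martingale by writing $\FFF_{t_m}$- against $\FFF_{t_{m+1}}$-measurable factors, takes expectations so that the martingale part cancels, and applies a discrete It\^o isometry together with the Lipschitz continuity of $G$; the $e^m$-contribution feeds the Gronwall loop while the remainder reduces to the auxiliary-to-exact distance. A discrete Gronwall inequality then yields $\sup_m\|e^m\|_{L_\omega^2 L_x^2}=\OO(h^{1+\gamma}+\tau^{1/2})$, completing the estimate.

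The main obstacle is exactly the interaction of the non-Lipschitz drift with the $L_x^2$-error analysis. Since $F$ only satisfies the monotone condition \eqref{con-f} and grows polynomially, the residual $F(u(t_{m+1}))-F(v_h^{m+1})$ cannot be bounded by a global Lipschitz constant; it must be estimated through local Lipschitz continuity weighted by high moments of $u$ and $v_h^{m+1}$, which is why the moment bounds of Theorem \ref{tm-h1} are indispensable, and why the implicit treatment of the drift (so that monotonicity enters with a favorable sign) is essential rather than cosmetic. Finally, the endpoint $\gamma=1$ is separated out because lifting the spatial regularity to $\dot H^2$ fails under the bare Assumption \ref{ap-f}, the second derivative of $f$ being unbounded; the extra structural hypothesis on $g$ in Assumption \ref{ap-g2} is exactly what closes the $\dot H^2$-regularity estimate and thereby extends \eqref{uhm} to $\gamma=1$.
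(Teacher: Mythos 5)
Your proposal follows the paper's route step for step: the same auxiliary process \eqref{aux1} (your $v_h^m$ is the paper's $\widetilde u_h^m$), the same appeal to Lemma \ref{lm-u-uhm} for the auxiliary-to-exact distance, the same implicit-drift splitting $F(u(t_{m+1}))-F(u_h^{m+1})=(F(v_h^{m+1})-F(u_h^{m+1}))+(F(u(t_{m+1}))-F(v_h^{m+1}))$ with the one-sided Lipschitz condition \eqref{con-f} applied to the first bracket, the same martingale/It\^o-isometry treatment of the noise increment, and the same discrete Gr\"onwall closure; the $\gamma=1$ discussion also matches the paper's use of Assumption \ref{ap-g2}.

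There is, however, one step that fails as written. In your displayed energy inequality you have already discarded the coercivity term $\tau\|\nabla e^{m+1}\|^2$ (``invoking the coercivity of $-A_h$'' only to drop it as nonnegative), and you then claim the residual $\tau\langle F(u(t_{m+1}))-F(v_h^{m+1}),e^{m+1}\rangle$ can be bounded by local Lipschitz continuity ``weighted by high moments of $u$ and $v_h^{m+1}$.'' This cannot be closed in $L_x^2$ alone: since $f$ has polynomial growth of order $q-1$, a pointwise bound gives $\|F(u)-F(v)\|\le C(1+\|u\|_{L_x^\infty}^{q-2}+\|v\|_{L_x^\infty}^{q-2})\|u-v\|$, and the required $L_x^\infty$ moments are not available under the theorem's hypotheses (take $\gamma=0$, $d=3$: $\dot H^1\not\hookrightarrow L_x^\infty$, and the auxiliary process is only bounded in $L_\omega^p\dot H^1$ by \eqref{aux1-h1}; any H\"older splitting that keeps $\|u-v\|_{L_x^2}$ forces the weight into $L_x^\infty$). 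The $\dot H^1$-moment weights you invoke enter only through the embedding \eqref{emb} when the pairing is taken in the $\dot H^{-1}$--$\dot H^1$ duality: the paper's dual estimate \eqref{f-1} yields
\begin{align*}
\tau\, {_{-1}}\langle F(u(t_{m+1}))-F(\widetilde u_h^{m+1}),\widetilde e_h^{m+1}\rangle_1
\le C\tau \bigl(1+\|u(t_{m+1})\|_1^{q-2}+\|\widetilde u_h^{m+1}\|_1^{q-2}\bigr)
\|u(t_{m+1})-\widetilde u_h^{m+1}\|\,\|\nabla \widetilde e_h^{m+1}\|,
\end{align*}
and the factor $\|\nabla \widetilde e_h^{m+1}\|$ is then absorbed by Young's inequality into \emph{half} of the coercivity term $\tau\|\nabla \widetilde e_h^{m+1}\|^2$, as in \eqref{k0-} --- exactly the term your inequality threw away. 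The repair is immediate (retain the gradient term in the energy balance and estimate as in the paper), after which the rest of your argument goes through verbatim.
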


\begin{tm} \label{main2}
Let $\gamma\in [0,1)$, $u_0\in \dot H^{1+\gamma}$, and Assumptions \ref{ap-f}-\ref{ap-g} and \ref{ap-f+}-\ref{ap-g+} hold.
Let $u$ and $U_h^m$ be the solutions of Eq. \eqref{see} and \eqref{milstein0}, respectively.
Then 
\begin{align}  \label{Uhm} 
\sup_{m\in \zz_M} \|u(t_m)-U_h^m \|_{L_\omega^2 L_x^2} 
&=\OO (h^{1+\gamma}+\tau^\frac{1+\gamma}2 ).
\end{align}
Assume furthermore that $u_0\in \dot H^2$ and Assumption \ref{ap-g2} holds, then \eqref{Uhm} is valid with $\gamma=1$.
\end{tm}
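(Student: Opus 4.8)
The plan is to imitate the two-step reduction behind Theorem \ref{main1}, substituting the Milstein auxiliary process \eqref{aux2} and its error bound Lemma \ref{lm-u-Uhm} for their Euler counterparts, with the extra temporal order coming from the It\^o--Taylor correction already built into \eqref{milstein0}. Let $\widehat{U}$ be the (spatially discrete, time-continuous) auxiliary process \eqref{aux2}, and write $\widehat{U}^m$ for its value at $t_m$. The triangle inequality splits the error as
\begin{align*}
\sup_{m\in \zz_M}\|u(t_m)-U_h^m\|_{L_\omega^2 L_x^2}
\le \sup_{m\in \zz_M}\|u(t_m)-\widehat{U}^m\|_{L_\omega^2 L_x^2}
+\sup_{m\in \zz_M}\|\widehat{U}^m-U_h^m\|_{L_\omega^2 L_x^2}.
\end{align*}
The first summand is bounded by $h^{1+\gamma}+\tau^{(1+\gamma)/2}$ in Lemma \ref{lm-u-Uhm}: the spatial factor $h^{1+\gamma}$ rests on the $\dot H^{1+\gamma}$-regularity of Theorems \ref{tm-h1}, \ref{tm-h2} and Proposition \ref{prop-h1+} (via the factorization method), while the temporal factor $\tau^{(1+\gamma)/2}$ comes from an It\^o--Taylor expansion of $G(u(r))$ around $G(u(t_m))$, so that the first-order term $DG(u(t_m))(u(r)-u(t_m))$ reproduces, up to the diffusion increment, the Milstein term $\PP_h DG(U_h^m) G(U_h^m)\int_{t_m}^{t_{m+1}}(W(r)-W(t_m)){\rm d}W(r)$ of \eqref{milstein0}, leaving a remainder of order $\tau^{(1+\gamma)/2}$.

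It then remains to control $\sup_m\|\widehat{U}^m-U_h^m\|_{L_\omega^2 L_x^2}$ by the first summand. Setting $\varepsilon^m:=\widehat{U}^m-U_h^m$, I would form the error recursion by subtracting \eqref{milstein0} from the grid values of \eqref{aux2}; its right-hand side is driven by the drift difference in $F$, the diffusion difference in $G$, the difference of the two Milstein terms, and a forcing term that is precisely the consistency quantity estimated in Lemma \ref{lm-u-Uhm}. Pairing the recursion with $\varepsilon^{m+1}$ in $L_x^2$, the implicit term $\tau A_h\varepsilon^{m+1}$ yields a nonpositive contribution, and the one-sided Lipschitz bound \eqref{con-f} absorbs the drift difference into $C\tau\|\varepsilon^{m+1}\|^2$ \emph{without differentiating $F$ twice}. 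The global-Lipschitz control of $G$ and of $DG\cdot G$ (Assumptions \ref{ap-g}, \ref{ap-f+}--\ref{ap-g+}) turns the diffusion and Milstein differences into $C\tau\|\varepsilon^m\|^2$ together with martingale increments of zero conditional mean; taking expectations and invoking the It\^o isometry to balance the quadratic-variation contribution against $\|\varepsilon^{m+1}-\varepsilon^m\|^2$, a discrete Gronwall inequality gives $\sup_m\|\varepsilon^m\|_{L_\omega^2 L_x^2}\le C\sup_m\|u(t_m)-\widehat{U}^m\|_{L_\omega^2 L_x^2}$, which closes the estimate and yields \eqref{Uhm}.

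The main obstacle is to reconcile the non-Lipschitz drift $F$ with the sharp temporal order $\tau^{(1+\gamma)/2}$ inside Lemma \ref{lm-u-Uhm}. Since $f$ obeys only the one-sided Lipschitz and polynomial-growth conditions of Assumption \ref{ap-f}, I cannot Taylor-expand $F$ in the consistency analysis as in the Lipschitz theory; the drift must be handled solely through the monotonicity \eqref{con-f} at the discrete level and through the high-moment bounds of Theorem \ref{tm-h1} when estimating the polynomial remainders. The genuinely delicate point is to verify that, after the Milstein term has been subtracted, the It\^o--Taylor remainder $G(u(r))-G(u(t_m))-DG(u(t_m))(u(r)-u(t_m))$ is of order $\tau^{(1+\gamma)/2}$ in $L_\omega^2 L_x^2$; this forces one to measure it in a norm in which the temporal H\"older regularity of $u$ with exponent $(1+\gamma)/2$ (from Theorems \ref{tm-h1}, \ref{tm-h2}) can be invoked, and to use the commutativity-type structure of Assumptions \ref{ap-f+}--\ref{ap-g+} so that the iterated stochastic integral collapses to the single computable term in \eqref{milstein0}. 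Finally, the endpoint $\gamma=1$ with $u_0\in\dot H^2$ follows from the same argument once Assumption \ref{ap-g2} furnishes the additional regularity of $G$ needed to extend both the $\dot H^{1+\gamma}$-estimates and the It\^o--Taylor remainder bound to $\gamma=1$.
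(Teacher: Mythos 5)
Your overall architecture coincides with the paper's: introduce the Milstein auxiliary process \eqref{aux2}, invoke Lemma \ref{lm-u-Uhm} for the consistency error, and close the estimate for $\widetilde{U}_h^m-U_h^m$ by a variational recursion in which the implicit Laplacian term is dissipative, the drift is handled through the one-sided Lipschitz condition \eqref{con-f} (after splitting $F(u(t_{m+1}))-F(U_h^{m+1})$ into a consistency part estimated by the dual bound \eqref{f-1} and a monotone part), the differences in $G$ and in $DG\cdot G$ are controlled by \eqref{con-g} and \eqref{con-g-com}, and the martingale/It\^o-isometry balancing against $\|\varepsilon^{m+1}-\varepsilon^m\|^2$ feeds a discrete Gr\"onwall argument. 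All of that matches the paper's proof of Theorem \ref{main2}.

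However, there is a genuine gap in how you propose to obtain the temporal order $\tau^{(1+\gamma)/2}$ inside Lemma \ref{lm-u-Uhm}. You assert that, because $f$ is non-Lipschitz, you ``cannot Taylor-expand $F$ in the consistency analysis'' and that the drift will be handled ``solely through the monotonicity \eqref{con-f} at the discrete level.'' This cannot deliver the claimed rate. Monotonicity only controls the discrete error recursion; the consistency error still contains the drift term $J^{m+1}_{21}=\|\sum_i\int_{t_i}^{t_{i+1}}S(t_{m+1}-r)[F(u(r))-F(u(t_{i+1}))]\,{\rm d}r\|_{L_\omega^pL_x^2}$, and if you estimate it only through the H\"older regularity of $u$ — which by Corollary \ref{cor-hol} has exponent at most $1/2$ in every $\dot H^\beta$-norm, $\beta\in[0,1]$ — you are capped at $\tau^{1/2}$, which would degrade \eqref{Uhm} to $\OOO(h^{1+\gamma}+\tau^{1/2})$ for all $\gamma>0$. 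The paper's resolution is precisely a second-order Taylor expansion of $F$: this is what Assumption \ref{ap-f+} is for — it is the polynomial growth condition \eqref{con-f''} on $f''$, not part of the commutativity structure of the noise, as your references to ``Assumptions \ref{ap-f+}--\ref{ap-g+}'' for the diffusion suggest you believe (the commutativity-type condition is Assumption \ref{ap-g+} alone). In the proof of Corollary \ref{cor-u-uhm} the increment $u(t_{i+1})-u(r)$ is expanded via the mild formulation, the leading term $DF(u(r))(S(t_{i+1}-r)-{\rm Id})u(r)$ yields $\tau^{(1+\gamma)/2}$ from the $\dot H^{1+\gamma}$-regularity, the remainder $R_F$ is controlled through \eqref{con-f''} together with the negative-norm trick \eqref{l1}, and — crucially — the term $DF(u(r))\int_r^{t_{i+1}}S(t_{i+1}-\sigma)G\,{\rm d}W(\sigma)$, which at face value is only $\OOO(\tau^{1/2})$, is lifted to order $\tau$ by stochastic Fubini and the Burkholder--Davis--Gundy inequalities \eqref{bdg-dis} and \eqref{bdg2}, exploiting its martingale structure. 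Your proposal contains no substitute for this mechanism, so as written the temporal rate in your Lemma \ref{lm-u-Uhm} step does not follow. (A secondary, fixable imprecision: pairing the drift difference with $\varepsilon^{m+1}$ absorbs only $F(\widetilde{U}_h^{m+1})-F(U_h^{m+1})$ by monotonicity; the cross term $F(u(t_{m+1}))-F(\widetilde{U}_h^{m+1})$ requires the dual estimate \eqref{f-1}, half of the dissipative term $\frac\tau2\|\nabla\widehat{e}_h^{m+1}\|^2$, and fourth moments of the consistency error, which is why Lemma \ref{lm-u-Uhm} is proved for all $p\ge1$.)
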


We note that similar convergence results in Theorems \ref{main1}-\ref{main2} hold true where $h$ is replaced by $N^{-1/d}$ for spectral Galerkin-based fully discrete Euler and Milstein schemes (see Theorems \ref{u-uNm} and \ref{u-UNm}).
The obtained spatial convergence rate in Theorems \ref{main1}-\ref{main2} for finite element method is optimal, since it exactly coincides with the optimal exponent of Sobolev regularity of the solution in Theorem \ref{tm-h1}, Proposition \ref{prop-h1+}, and Theorem \ref{tm-h2}.
The rate for the fully discrete scheme \eqref{full0} is also sharp which removes the infinitesimal factor appearing in \eqref{mp}, agreeing with the standard theory of strong error estimation for the drift-implicit Euler scheme in the globally Lipschitz case; see \cite{Kru14(IMA)} for $\gamma\in [0,1)$.
In fact, in the simplest case of the stochastic heat equation \eqref{see-fg} where $f\equiv 0$ and $g={\rm Id}$, \cite[Theorem 1]{MR07(FCM)} gave a lower error bound $\OOO(\tau^{1/2})$ for temporal numerical approximation once one only uses a total of $[\tau^{-1}]$ evaluations of one-dimensional components for the driving $\bf Q$-Wiener process $W$.

Associated with the aforementioned problems, the temporal approximation result can be considered as a generalization of the finite-dimensional case in \cite{HMS02(SINUM)} to the infinite-dimensional case  without adding any smooth or bounded assumptions on the coefficients of Eq. \eqref{see-fg} (see Remark \ref{rk-hms}).
The spatial approximation result can be seen as an improvement of the convergence rates in \cite{MP18(CMAM)} and \cite{Pro14}; see Remark \ref{rk-pro}.
Theorem \ref{main1} gives more than expected in Conjecture \ref{Q1}; indeed, it shows that the drift-implicit Euler--Galerkin finite element scheme \eqref{full0} applied to the general SPDE \eqref{see} which includes the stochastic Allen--Cahn equation possesses the sharp strong convergence rate under general regularity assumptions on the initial datum.
Finally, the convergence rate for the Milsten--Galerkin finite element scheme \eqref{milstein0} in Theorem \ref{main2} can be seen as a generalization of \cite{JR15(FCM)} and \cite{Kru14(SPDE)} for SPDEs with Lipschitz coefficients to SPDEs with non-Lipschitz but monotone coefficients.

The rest of the paper is organized as follows.
We list some preliminaries including frequently used notations, assumptions, stochastic tools, and formulations for solutions of Eq. \eqref{see} in the next section.
In Section \ref{sec3}, we proposed a detailed well-posedness and regularity analysis for Eq. \eqref{see}.
Rigorous strong error estimations for the aforementioned two fully discrete schemes \eqref{full0} and \eqref{milstein0} are performed in Sections \ref{sec4}-\ref{sec5}, respectively.
The proofs of Theorems \ref{main1}-\ref{main2} and similar strong error estimations for spectral Galerkin-based fully discrete Euler and Milstein schemes are given in these two sections.

\section{Preliminaries}
\label{sec2}

In this section, we first define functional spaces, norms, and notations that will be used throughout the paper.
Then we give the main assumptions and different formulations of solutions for Eq. \eqref{see}.

\subsection{Notations}

Let $T\in \rr^*_+=(0,\infty)$ be a fixed terminal time.
For an integer $M\in \nn_+=\{1,2,\cdots\}$, we denote by $\zz_M:=\{0,1,\cdots,M\}$ and $\zz_M^*=\{1,\cdots,M\}$.
Let $(\Omega,\FFF,\pp)$ be a probability space with a normal filtration 
$\ff:=(\FFF_t)_{t\in [0,T]}$ which forms a stochastic basis.

Throughout, we use the notations $(L_x^q:=L^q(\OOO;\rr),\|\cdot\|_{L_x^q})$ for $q\in [2,\infty]$ and $(H^\bs=H^\bs(\OOO;\rr),\|\cdot\|_{H^\bs})$ for 
$\bs\in \nn_+$ to denote the usual Lebesgue and Sobolev spaces, respectively; when $q=2$, $L_x^2$ is denoted by $H$.
We will use ${\rm Id}$ to denote the identity operator on various finite-dimensional or infinite-dimensional Hilbert spaces such as $\rr^{K \times K}$ for some $K\in \nn_+$ and $H$ if there is no confusion. 
For convenience, sometimes we use the temporal, sample path, and spatial mixed norm $\|\cdot\|_{L_\omega^p L_t^r L_x^q}$ in different orders, such as
\begin{align*}
\|X\|_{L_\omega^p L_t^r L_x^q}
:=\Big(\int_\Omega \Big(\int_0^T \Big(\int_0^1 |X(t,x,\omega)|^q {\rm d}x\Big)^\frac rq {\rm d}t\Big)^\frac pr \pp({\rm d}\omega)\Big)^\frac 1p
\end{align*}
for $X\in L_\omega^p L_t^r L_x^q$,
with the usual modification for $r=\infty$ or $q=\infty$.
For an integer $\bs\in \nn_+$, $(\CC_b^\bs, \|\cdot\|_{\CC_b^\bs})$ is used to denote the Banach space of bounded functions together with its derivatives up to order $\bs$.

Denote by $A: {\rm Dom}(A)\subset H\rightarrow H$ the Dirichlet Laplacian on $H$.
Let $\{(\lambda_k, e_k)\}_{k=1}^\infty$ be an eigensystem of $-A$ with $\{\lambda_k\}_{k=1}^\infty$ being an increasing order. 
Let $N\in \nn_+$ and $V_N:={\rm Span}\{e_1,\cdots,e_N\}$.
Then $A$ is the infinitesimal generator of an analytic $C_0$-semigroup $S(\cdot)=e^{A\cdot}$ on $H$, and thus one can define the fractional powers $(-A)^\theta$ for $\theta\in \rr$ of the self-adjoint and positive operator $-A$.
Let $\dot H^\theta$ be the domain of 
$(-A)^{\theta/2}$ equipped with the norm $\|\cdot\|_\theta$ (related inner product is denoted by $\<\cdot, \cdot\>_\theta$):
\begin{align} \label{h-theta}
\|u\|_\theta:=\|(-A)^\frac\theta2 u\|,
\quad u \in \dot H^\theta.
\end{align}
In particular, one has $\dot H^0=H$, $\dot H^1=V:=H_0^1(\OOO;\rr)$ and 
$\dot H^2=V\cap H^2$.
The inner products (and related norms) of $H$ and $V$ are denoted by 
$\|\cdot \|$ (and $\<\cdot, \cdot\>$) and $\|\cdot \|_1$ (and $\<\cdot, \cdot\>_1$), respectively. 
Let $V^*=\dot H^{-1}$.
Then $V^*$ is the dual space of $V$ (with respect to $\<\cdot, \cdot\>$); the dualization between $V$ and $V^*$ is denoted by ${_{-1}}\<\cdot, \cdot\>_1$. 
We will need the following ultracontractive and smoothing properties of the analytic $\CC_0$-semigroup $S$ for any $t\in (0,T]$, $\mu\ge 0$ and $\rho\in [0,1]$ (see, e.g., \cite[Theorem 6.13 in Chapter 2]{Paz83}):
\begin{align}\label{ana}
\|(-A)^\mu S(t)\|_{\LL(H)} \le C t^{-\mu}, \quad 
\|(-A)^{-\rho} (S(t)-{\rm Id}_H) \|_{\LL(H)} \le Ct^\rho, 
\end{align}
where $(\LL(H; \dot H^\theta), \|\cdot\|_{\LL(H; \dot H^\theta)})$ denotes the space of bounded linear operators from $H$ to $\dot H^\theta$ for $\theta\in \rr$ and $\LL(H):=\LL(H; H)$.
Here and what follows we use $C$, $C_1$, $C_2$, $\cdots$, to denote generic constants independent of various discrete parameters that may be different in each appearance.
For simplicity, we use the notation $a \simeq_p b$ for $a, b \in \rr$ if there exists two constants $C_1=C_1(p)$ and $C_2=C_2(p)$ such that 
$C_1 b \le a \le C_2 b$.

Let $U$ be another separable Hilbert space and ${\bf Q}\in \LL(U)$ be a self-adjoint and nonnegative definite operator on $U$.
Denote by $U_0:={\bf Q}^\frac12 U$ and 
$(\LL_2^\theta:=HS(U_0; \dot H^\theta), \|\cdot\|_{\LL_2^\theta})$ the space of Hilbert--Schmidt operators from $U_0$ to $\dot H^\theta$ for $\theta\in \rr_+$.
The spaces $U$, $U_0$, and $\LL_2^\theta$ are equipped with Borel 
$\sigma$-algebras $\BB(U)$, $\BB(U_0)$, and $\BB(\LL_2^\theta)$, respectively.
Let $W:=\{W(t):\ t\in [0,T]\}$ be a $U$-valued ${\bf Q}$-Wiener process in the stochastic basis $(\Omega,\FFF, \ff,\pp)$, i.e., there exists an orthonormal basis $\{g_k\}_{k=1}^\infty$ of $U$ which forms the eigenvectors of $\bf Q$ subject to the eigenvalues $\{\lambda^{\bf Q}_k\}_{k=1}^\infty$ and a sequence of mutually independent Brownian motions $\{\beta_k\}_{k=1}^\infty $ such that (see \cite[Proposition 2.1.10]{LR15})
\begin{align*}
W(t)
=\sum_{k\in \nn_+} {\bf Q}^\frac12 g_k \beta_k(t)
=\sum_{k\in \nn_+} \sqrt{\lambda^{\bf Q}_k} g_k \beta_k(t),
\quad t\in [0,T].
\end{align*}

\subsection{Main Assumptions}
\label{sec2.2}

Our main conditions on the coefficients of Eq. \eqref{see-fg}--\eqref{dbc} or the equivalent Eq. \eqref{see} are the following two assumptions.

\begin{ap} \label{ap-f}
$f:\rr\rightarrow \rr$ is differentiable and there exist constants $L_f\in \rr$, $L'_f\in \rr_+$ and $q\ge 2$ such that 
\begin{align} 
& (f(x)-f(y)) (x-y) \le L_f (x-y)^2,
\quad x,y\in \rr, \label{con-f} \\
& |f'(x)|\le L'_f(1+|x|^{q-2}),\quad x\in \rr. \label{con-f'}
\end{align}
\end{ap}

Throughout, we assume that 
\begin{align} \label{q}
q\in [2,\infty), \quad d=1,2;\quad 
q\in [2,4], \quad d=3,
\end{align}
and thus the following frequently used Sobolev embedding holds (see Remark \ref{rk-q} for the source of this technique restriction):
\begin{align} \label{emb}
\dot H^1\hookrightarrow L_x^{2(q-1)}\hookrightarrow L_x^q,
\quad d=1,2,3.
\end{align}

From the one-sided Lipschitz condition \eqref{con-f} and the differentiability of $f$, we get  
\begin{align} \label{coe}
f(x) x & =(f(x)-f(0)) (x-0)+f(0) x  
\le C(1+x^2),
\quad x\in \rr,
\end{align}
and 
\begin{align} \label{f'1}
f'(x) \le L_f, \quad x\in \rr.
\end{align}
By mean value theorem and the growth condition \eqref{con-f'}, it is clear that $f$ grows as most polynomially of order $(q-1)$, i.e., 
\begin{align} \label{con-f1}
|f(x)| \le C(1+|x|^{q-1}),
\quad x \in \rr.
\end{align}

\begin{ex} \label{ex-f}
A concrete example such that Assumption \ref{ap-f} holds is the odd polynomials with negative leading coefficients.
More precisely, let $q-1\in \nn_+$ be an odd number and 
\begin{align} \label{ex-f0}
f(x)=-a_{q-1} x^{q-1}+\sum_{k=0}^{q-2} a_k x^k,
\quad a_{q-1}\in \rr_+^*,\ a_k\in \rr,\ k\in \zz_{q-2}, \ x\in \rr,
\end{align}
then one can easily check that the conditions \eqref{con-f}-\eqref{con-f'} hold true.
\end{ex}

Denote by $q'$ the conjugation of $q$ (i.e., $1/q'+1/q=1$), $F: L_x^q \rightarrow L_x^{q'}$ the Nemytskii operator associated with $f$, i.e.,
\begin{align*}
F(u)(x):=f(u(x)),\quad u\in L_x^{q'},\ x\in \OOO,
\end{align*}
and $_{L^{q'}_x} \<\cdot, \cdot\>_{L^q_x}$ the dual between $L^{q'}_x$ and $L^q_x$.
Then it follows from Assumption \ref{ap-f} that  
\begin{align} 
F\ \text{has a continuous} & \text{ extension from}\ L^{q'}_x\ \text{to}\ L^q_x, \label{F-con} \\
_{L^{q'}_x}\<F(u)-F(v), u-v\>_{L^q_x}
\le & L_f \|u-v\|_{L^2_x}^2,
\quad u,v\in L^q_x, \quad \text{and} \label{F-mon} \\
_{L^{q'}_x}\<F(u), u\>_{L^q_x}
\le & C(1+\|u\|_{L^2_x}^2), 
\quad u\in L^q_x. \label{F-coe}
\end{align}

\begin{rk} \label{rk-liu}
Compared with the assumption on $f$ in the case of space-time white noise where instead of \eqref{con-f} the authors in \cite{LQ20(IMA)} and \cite{LQ19(IJNAM)} assumed that 
\begin{align} \label{lq}
(f(x)-f(y)) (x-y) \le L_f |x-y|^2-c |x-y|^q, \quad x,y\in \rr,
\end{align}
for some $L_f\in \rr$ and $c\in \rr^*_+$, Assumption \ref{ap-f} weakens this monotone-type condition due to the smoothness of the noise.
\end{rk}

Denote by $G: H\rightarrow \LL_2^0$ the Nemytskii operator associated with $g$:
\begin{align*}
G(u) v(x):=g(u(x)) v(x),\quad u\in H,\ v\in U_0,\ x\in \OOO.
\end{align*}

\begin{ap} \label{ap-g}
The operator $G:H\rightarrow \LL_2^0$ is Lipschitz continuous, i.e., there exists a constant $L_g\in \rr_+$ such that 
\begin{align}  \label{con-g} 
\|G(u)-G(v)\|_{\LL_2^0} \le L_g \|u-v\|,
\quad u,v\in H.
\end{align}
Moreover, $G(\dot H^1_x)\subset \LL_2^1$ and 
\begin{align} \label{con-g1}
\|G(z) \|_{\LL_2^1} \le L_g(1+ \|z\|_1),
\quad z\in \dot H^1_x.  
\end{align}
\end{ap}

To derive the $\dot H^2$-well-posedness and moments' estimation of Eq. \eqref{see} in Section \ref{sec3.2}, we need the following growth condition of $G$ in $\dot H^{1+\theta}_x$ for some positive $\theta$.

\begin{ap} \label{ap-g2}
There exist constants $\theta\in (0,1)$ and $\rho\ge 1$ such that $G(\dot H^{1+\theta}_x)\subset \LL_2^{1+\theta}$ and 
\begin{align} \label{con-g2}
\|G(z) \|_{\LL_2^{1+\theta}}
\le C(1+ \|z\|^\rho_{1+\theta}),
\quad z \in \dot H^{1+\theta}_x.
\end{align}
\end{ap}

\begin{rk} \label{rk-pro}
Under Majee--Prohl's condition on $g$ of Eq. \eqref{eq-pro} with $K=1$ that $g\in \CC_b^2(\rr; \rr)$  in \cite[Assumption A.1]{MP18(CMAM)}, Assumption \ref{ap-g} holds true.
In fact, in this case we have $U=\rr$ and ${\bf Q}=\rm Id$ and thus 
\begin{align*} 
\|G(u)-G(v)\|_{\LL_2^0}
=\|g(u)-g(v)\| & \le C \|u-v\|,
\quad u,v\in L^2_x,  \\
\|G(z) \|_{\LL_2^1} 
\le \sqrt{\|g(z)\|^2+\|g'(z) \nabla z\|^2} & \le C(1+ \|z\|_1),
\quad z\in \dot H^1_x,  
\end{align*}
for some constant $C$ depending only on $\|g\|_{\CC_b^1}$ but independent of $\|g\|_{\CC_b^2}$.
\end{rk}

\begin{rk} \label{rk-wan}
In the additive noise case $G \equiv {\rm Id}$ the conditions \eqref{con-g}-\eqref{con-g1} are equivalent to the assumption $\|(-A)^{1/2} {\bf Q}^{1/2}\|_{HS(H;H)}<\infty$ which was imposed in \cite[Section 2.1]{QW19(JSC)};
a multiplicative example fulfilling Assumption \ref{ap-g} is given in Example \ref{ex-g}.
\end{rk}

\begin{rk} \label{rk-hms}
Provided that $f:\rr^\br\rightarrow \rr^\br$ and $g:\rr^\br\rightarrow \rr^{\br\times K}$ are $\CC^1$ functions such that the following conditions hold for some constants $L_f \in \rr$,  $L_g, L_f'\in \rr_+$, $l\in \nn$ and for any $x,y\in \rr^\br$ (see \cite[Assumptions 3.1 and 4.1]{HMS02(SINUM)}): 
\begin{align} 
(f(x)-f(y))^\top (x-y) & \le L_f |x-y|^2, \label{con-f+} \\
\|g(x)-g(y)\|_{HS(\rr^K; \rr^\br)} & \le L_g |x-y|,  \label{con-g+}\\
|f(0)|<\infty, \ \
|f(x)-f(y)|^2 & \le L'_f(1+|x|^l+|y|^l) |x-y|^2,   \label{con-f'+}
\end{align}
the authors in \cite[Theorem 5.3]{HMS02(SINUM)} proved that the backward Euler scheme 
\begin{align*}
u^{m+1}=u^m+\tau f(u^{m+1})+g(u^m) (\beta(t_{m+1})-\beta(t_m)),\quad m\in \zz_{M-1};
\quad u^0=u_0,
\end{align*}
for the $\br$-dimensional SODE \eqref{sode} driven by a $K$-dimensional Wiener process $\beta$ is convergent with mean-square order $1/2$.

The first two conditions  \eqref{con-f+} and \eqref{con-g+} are exactly \eqref{con-f} and \eqref{con-g}, respectively, in the finite-dimensional case $U=\rr^K$, ${\bf Q}=\rm Id$, and $H=\rr^\br$.
It is clear that once the last condition \eqref{con-f'+} is valid, our condition \eqref{con-f'} holds true with $q=2+l/2\ge 2$.
\end{rk}

\subsection{Formulations of Solutions}

Before formulating the solutions of Eq. \eqref{see} in variational and semigroup frameworks, we need to introduce several types of Burkholder--Davis--Gundy inequalities, which are frequently used in the regularity analysis and strong error estimations to control the appearing stochastic integrals in both continuous and discrete settings.

We begin with a Burkholder--Davis--Gundy inequality for Hilbert space valued continuous martingales; see, \cite[Theorem 1.1]{MR16(EM)}.

\begin{lm}  \label{lm-bdg} 
Let $(\hh, \|\cdot\|_{\hh})$ be a real separable Hilbert space.
Then for any $p>0$ and any $\hh$-valued continuous $\ff$-martingale $\{N(t):\ t\in [0, T]\}$ with $N(0)=0$, 
\begin{align}  \label{bdg1} 
\ee\Big[\sup_{t\in [0,T]} \|N(t)\|_{\hh}^p \Big] 
\simeq_p \ee\Big[\<N\>_T^\frac p2 \Big].
\end{align}
\end{lm}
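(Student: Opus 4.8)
The plan is to deduce the Hilbert-space-valued inequality \eqref{bdg1} from the classical real-valued Burkholder--Davis--Gundy inequality for continuous martingales, the bridge between the two being It\^o's formula for $x\mapsto\|x\|_{\hh}^2$ together with a Young-type absorption. Throughout, $\<N\>$ denotes the scalar quadratic variation of $N$, i.e.\ the unique continuous increasing process with $\<N\>_0=0$ for which $\|N(\cdot)\|_{\hh}^2-\<N\>$ is a continuous real-valued $\ff$-local martingale.

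First I would localize so that every quantity in \eqref{bdg1} is finite. Setting $\tau_n:=\inf\{t\in[0,T]:\|N(t)\|_{\hh}\ge n \text{ or } \<N\>_t\ge n\}\wedge T$ and working with the stopped martingales $N(\cdot\wedge\tau_n)$, the monotone convergences $\sup_t\|N(t\wedge\tau_n)\|_{\hh}^p\uparrow\sup_t\|N(t)\|_{\hh}^p$ and $\<N\>_{\tau_n}\uparrow\<N\>_T$ reduce the statement to the case in which $\sup_t\|N(t)\|_{\hh}$ and $\<N\>_T$ are bounded; in particular $\ee[\sup_t\|N(t)\|_{\hh}^p]<\infty$, which is what makes the absorption below legitimate.

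The analytic core is as follows. It\^o's formula applied to the smooth map $x\mapsto\|x\|_{\hh}^2$ gives, since $N(0)=0$,
\begin{align*}
\|N(t)\|_{\hh}^2=\mu(t)+\<N\>_t,\qquad \mu(t):=2\int_0^t\<N(s),{\rm d}N(s)\>_{\hh},
\end{align*}
where $\mu$ is a continuous real martingale whose quadratic variation obeys $\<\mu\>_t\le 4\big(\sup_{s\le T}\|N(s)\|_{\hh}^2\big)\<N\>_T$ by the Kunita--Watanabe inequality. I first treat $p\ge2$, where $p/2\ge1$. For the bound $\lesssim_p$ I start from $\sup_t\|N(t)\|_{\hh}^2\le\sup_t|\mu(t)|+\<N\>_T$, raise to the power $p/2$, take expectations, and apply the classical one-dimensional Burkholder--Davis--Gundy inequality to $\mu$ with exponent $p/2$:
\begin{align*}
\ee\Big[\sup_t|\mu(t)|^{p/2}\Big]\lesssim_p\ee\big[\<\mu\>_T^{p/4}\big]\lesssim_p\ee\Big[\big(\sup_s\|N(s)\|_{\hh}^{p/2}\big)\<N\>_T^{p/4}\Big].
\end{align*}
A Young inequality $ab\le\varepsilon a^2+C_\varepsilon b^2$ with $a=\sup_s\|N(s)\|_{\hh}^{p/2}$, $b=\<N\>_T^{p/4}$ then absorbs the finite term $\varepsilon\,\ee[\sup_s\|N(s)\|_{\hh}^p]$ into the left side and yields $\ee[\sup_t\|N(t)\|_{\hh}^p]\lesssim_p\ee[\<N\>_T^{p/2}]$. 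The reverse bound $\gtrsim_p$ is entirely symmetric: from $\<N\>_T=\|N(T)\|_{\hh}^2-\mu(T)$ I estimate $\ee[\<N\>_T^{p/2}]\lesssim_p\ee[\|N(T)\|_{\hh}^p]+\ee[|\mu(T)|^{p/2}]$ and dispose of the $\mu$-term by the same absorption, using $\|N(T)\|_{\hh}^p\le\sup_t\|N(t)\|_{\hh}^p$.

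The step I expect to be most delicate is the range $0<p<2$: here $p/2<1$ breaks the Young/absorption scheme, and $x\mapsto\|x\|_{\hh}^p$ is no longer smooth at the origin. I would handle it by a good-$\lambda$ (equivalently, Lenglart domination) argument showing that the pair $\big(\sup_t\|N(t)\|_{\hh},\<N\>_T^{1/2}\big)$ satisfies a good-$\lambda$ inequality, which promotes the $L^p$-comparison from the single exponent $p=2$ to \emph{all} $p>0$ with $p$-dependent constants. Separability of $\hh$ enters precisely here, legitimizing It\^o's formula and the identity $\<N\>=\sum_k\<N_k\>$ for the components $N_k:=\<N,f_k\>_{\hh}$ relative to an orthonormal basis $\{f_k\}$ of $\hh$. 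Combining the two directions over all $p>0$ gives the equivalence \eqref{bdg1}.
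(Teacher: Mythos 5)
Your proposal is correct, but note that the paper contains no proof of this lemma at all: the statement is quoted directly from \cite[Theorem 1.1]{MR16(EM)}, so you are supplying an argument where the paper only cites one. Your route is the classical reduction of the Hilbert-valued inequality to the scalar case, and it is essentially the skeleton of the proofs surveyed in that reference: localization, It\^o's formula for $x\mapsto\|x\|_{\hh}^2$ giving $\|N(t)\|_{\hh}^2=\mu(t)+\<N\>_t$ with $\mu=2\int_0^\cdot\<N(s),{\rm d}N(s)\>_{\hh}$, the Kunita--Watanabe bound $\<\mu\>_T\le 4\sup_{s\le T}\|N(s)\|_{\hh}^2\,\<N\>_T$, and scalar BDG at exponent $p/2$ followed by Young absorption for $p\ge 2$; all of these steps are sound, and the localization correctly legitimizes the absorption in both directions. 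Two remarks on the range $0<p<2$, which you rightly flag as the delicate part. First, your phrasing that a good-$\lambda$ inequality ``promotes the comparison from $p=2$'' is slightly off: a good-$\lambda$ estimate yields the $L^p$ comparison for all $p>0$ directly, rather than by interpolation from a single exponent, although the stopping-time proof of the good-$\lambda$ inequality does transfer verbatim from the real case since it only uses continuity of $\|N(\cdot)\|_{\hh}$ and the $L^2$ identity on stopped intervals. Second, you can make this range fully rigorous in a few lines without good-$\lambda$: after your localization, $\ee[\|N(\tau)\|_{\hh}^2]=\ee[\<N\>_\tau]$ for every stopping time $\tau\le T$, so $\|N(\cdot)\|_{\hh}^2$ is Lenglart-dominated by the continuous increasing (hence predictable) process $\<N\>$, while conversely $\<N\>$ is dominated by the continuous increasing process $\sup_{s\le\cdot}\|N(s)\|_{\hh}^2$; Lenglart's inequality applied at exponent $p/2<1$ then gives both directions of \eqref{bdg1} for $0<p<2$ with constants depending only on $p$. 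With that step written out, your proof is complete and self-contained, which is more than the paper provides.
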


If we take $\hh=H$ and $N(\cdot):=\int_0^\cdot \Phi(r){\rm d}W(r)$ with an $\ff$-adapted process $\Phi \in L_\omega^p L_t^2 \LL_2^0$ in Lemma \ref{lm-bdg}, then we get 
$\<N\>_T=\int_0^T \|\Phi(r)\|^2_{\LL_2^0} {\rm d}r$ 
and the following type of Burkholder--Davis--Gundy inequality for stochastic integral:
\begin{align} \label{bdg2}
\ee\Big[\sup_{t\in [0,T]} \Big\|\int_0^t \Phi(r){\rm d}W(r)\Big\|^p\Big]
\simeq_p \ee\Big[\Big(\int_0^T \|\Phi(r)\|^2_{\LL_2^0} {\rm d}r \Big)^\frac p2 \Big].
\end{align}

We also need the following Burkholder--Davis-Gundy inequality for a sequence of $H$-valued discrete martingales, see \cite[Lemma 4.1]{HJ11(FCM)} for the scalar case.

\begin{lm}  
Let $p\ge 2$ and $\{Z_m\}_{m\in \zz_M}$ be a sequence of $H$-valued random variables with bounded $p$-moments such that 
$\ee [Z_{m+1}| Z_0,\cdots,Z_m]=0$, $m\in \zz_{M-1}$.
Then there exists a constant $C=C(p)$ such that 
\begin{align} \label{bdg-dis}
\Big(\ee\Big[\Big\| \sum_{i=0}^m Z_i \Big\|^p \Big]\Big)^\frac1p
\le C \Big( \sum_{i=0}^m \Big(\ee\Big[ \|Z_i\|^p\Big]\Big)^\frac2p  \Big)^\frac12.
\end{align}
\end{lm}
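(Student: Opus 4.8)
The plan is to recognize the partial sums as a Hilbert-space-valued discrete martingale and then combine the quadratic-variation form of the Burkholder inequality with Minkowski's inequality in $L^{p/2}(\Omega)$. To this end I set $\FFF_n:=\sigma(Z_0,\dots,Z_n)$ and $S_n:=\sum_{i=0}^n Z_i$ for $n\in \zz_M$. The hypothesis $\ee[Z_{n+1}\,|\,Z_0,\dots,Z_n]=0$ says precisely that $(S_n,\FFF_n)_{n\in \zz_M}$ is an $H$-valued martingale with increments $Z_i$, so the task is to control the $p$-th moment of $S_m$ by the $p$-th moments of its increments.

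The key step, and the one I expect to be the main obstacle, is the $H$-valued discrete Burkholder inequality
\begin{align*}
\ee\Big[\Big\|\sum_{i=0}^m Z_i\Big\|^p\Big]
\le C_p\,\ee\Big[\Big(\sum_{i=0}^m \|Z_i\|^2\Big)^{\frac p2}\Big],
\quad p\ge 2 .
\end{align*}
For $p=2$ this reduces to orthogonality: expanding $\|S_m\|^2=\sum_{i,j}\<Z_i,Z_j\>$ and using $\ee\<Z_i,Z_j\>=\ee[\<Z_i,\ee[Z_j\,|\,\FFF_{j-1}]\>]=0$ for $i<j$ gives the identity $\ee\|S_m\|^2=\sum_{i=0}^m\ee\|Z_i\|^2$. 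For general $p\ge 2$ it is the Hilbert-space analogue of the scalar estimate in \cite[Lemma 4.1]{HJ11(FCM)}; I would obtain it by repeating that argument with the absolute value replaced by the $H$-norm $\|\cdot\|$, the point being that the only probabilistic inputs---orthogonality of the increments, the conditional Jensen inequality, and the elementary convexity estimates in Burkholder's square-function argument---use nothing beyond the inner-product structure of $H$ and are insensitive to scalarity. Equivalently, this inequality expresses the fact that every separable Hilbert space has martingale type $2$. It is worth stressing why one cannot simply reduce to the scalar case coordinatewise: writing $H\cong L^2(\mu)$ and applying the scalar bound pointwise forces two applications of Minkowski's integral inequality that point in opposite directions, so the genuinely vector-valued inequality above is really needed.

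Granting this, the conclusion is routine. I put $a_i:=\|Z_i\|^2\ge 0$; since $p\ge 2$, the triangle inequality in $L^{p/2}(\Omega)$ yields
\begin{align*}
\Big(\ee\Big[\Big(\sum_{i=0}^m\|Z_i\|^2\Big)^{\frac p2}\Big]\Big)^{\frac 2p}
=\Big\|\sum_{i=0}^m a_i\Big\|_{L^{p/2}(\Omega)}
\le \sum_{i=0}^m \big\|a_i\big\|_{L^{p/2}(\Omega)}
=\sum_{i=0}^m \big(\ee\|Z_i\|^p\big)^{\frac 2p}.
\end{align*}
Substituting this bound into the Burkholder inequality and taking $p$-th roots gives the assertion with $C=C_p^{1/p}$, which completes the plan.
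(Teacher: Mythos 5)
Your proof is correct and follows exactly the route the paper intends: the paper states this lemma without proof, citing \cite[Lemma 4.1]{HJ11(FCM)} for the scalar case, whose argument is precisely your combination of the discrete Burkholder inequality for the martingale $S_n=\sum_{i=0}^n Z_i$ with the triangle inequality in $L^{p/2}(\Omega)$ applied to $a_i=\|Z_i\|^2$, and your extension to the $H$-valued setting via the Hilbert-space (martingale type $2$) Burkholder inequality is the standard way to fill in what the citation leaves implicit. Your side remark is also accurate: a coordinatewise reduction to the scalar lemma fails because the embedding $L^p(\ell^2)\hookrightarrow \ell^2(L^p)$ for $p\ge 2$ goes the wrong way at the level of the increments, so the genuinely vector-valued inequality is needed.
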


Next, we recall the following definitions of the variational and mild solutions of Eq. \eqref{see}.
The relation of these two types of solutions to general semilinear SPDEs is given in \cite[Appendix G]{LR15}. 
We mainly focus on a solution which is an $\dot H^1$-valued, 
$\ff$-adapted process.

\begin{df} \label{df}
Let $u=\{u(t):\ t\in [0,T]\}$ be an $\dot H^1$-valued, $\ff$-adapted process.
\begin{enumerate}
\item [{(1)}]
$u$ is called a variational solution of Eq. \eqref{see} if  
\begin{align} \label{var}
u(t)=u_0+\int_0^t (A u(r)+F(u(r)) {\rm d}r
+\int_0^t G(u(r)) {\rm d}W(r),
\quad t\in [0,T].
\end{align}
Eq. \eqref{var} takes values in $\dot H^{-1}$ and explained by
\begin{align*}
\<u(t), v\>
=\<u_0, v\>+ \int_0^t {_{-1}}\<A u(r)+F(u(r)), v \>_1 {\rm d}r
+\int_0^t \<v, G(u(r)) {\rm d}W(r)\>,
\end{align*}
for any $t\in [0,T]$ and $v\in \dot H^1$.

\item [{(2)}]
$u$ is called a mild solution of Eq. \eqref{see} if 
\begin{align}\label{mild}
u(t)=S(t)u_0+S*F(u)(t)+S\diamond G(u)(t),
\quad t\in [0,T],
\end{align}
where $S*F(u)(\cdot):=\int_0^\cdot S(\cdot-r) F(u(r)){\rm d}r$ and 
$S\diamond G(u)(\cdot):=\int_0^\cdot S(\cdot-r) G(u(r)){\rm d}W(r)$ denote the deterministic and stochastic convolutions, respectively.
\end{enumerate}
For these two types of formulations, the uniqueness of solutions are both understood in the sense of stochastic equivalence.
\end{df}

All of the appeared equations in this paper are valid in almost surely (a.s.) sense and we omit the a.s. in these equations for simplicity.

The main ingredient to derive the Sobolev and H\"older regularity for the solution $u$ of Eq. \eqref{see} in the next section (see Section \ref{sec3.2} for details) is identifying the variational and mild solutions and combining them with a bootstrap approach.
We have the following equivalence between the variational and mild solutions of Eq. \eqref{see}.

\begin{lm} \label{lm-sol}
Let $p\ge q$, $u_0$ is $\FFF_0$-measurable such that $u_0\in L^p(\Omega; \dot H^1)$, and Assumptions \ref{ap-f}-\ref{ap-g} hold.
Then a variational solution $\{u(t):\ t\in [0,T]\}$ in $L^p(\Omega; \CC([0,T]; \dot H^1))$ of Eq. \eqref{see} is also its mild solution in the sense of Definition \ref{df}.
\end{lm}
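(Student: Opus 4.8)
The plan is to derive the mild formulation \eqref{mild} directly from the variational one \eqref{var} via the stochastic variation-of-constants formula, which I would make rigorous by projecting onto the eigenbasis $\{e_k\}$ of $-A$ and reassembling through a stochastic Fubini argument. Before anything else I would check that the right-hand side of \eqref{mild} defines a meaningful $\ff$-adapted, $H$-valued process under the stated hypotheses. For the deterministic convolution, the growth bound \eqref{con-f1} together with the embedding \eqref{emb} and the extension property \eqref{F-con} place $F(u(r))$ in $L_x^{q'}\hookrightarrow \dot H^{-1}$; the smoothing estimate \eqref{ana} then gives $\|S(t-r)F(u(r))\|\le C(t-r)^{-1/2}\|F(u(r))\|_{-1}$, which is Bochner integrable in $r$ because $u\in L^p(\Omega;\CC([0,T];\dot H^1))$ with $p\ge q$. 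For the stochastic convolution, the contraction property $\|S(t-r)\|_{\LL(H)}\le 1$ and the Lipschitz bound \eqref{con-g} show that $r\mapsto S(t-r)G(u(r))$ lies in $L^2([0,T];\LL_2^0)$ almost surely, so the inequality \eqref{bdg2} guarantees a well-defined stochastic integral with finite $p$-th moment.

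Next I would pair \eqref{var} with a fixed eigenfunction $e_k\in \dot H^1$. Since $-Ae_k=\lambda_k e_k$ and $A$ is self-adjoint, the dualization collapses to ${}_{-1}\<Au(r),e_k\>_1=-\lambda_k\<u(r),e_k\>$, so the coordinate $u_k(t):=\<u(t),e_k\>$ is a real-valued continuous semimartingale solving the scalar linear equation ${\rm d}u_k(t)=(-\lambda_k u_k(t)+\<F(u(t)),e_k\>)\,{\rm d}t+\<e_k,G(u(t))\,{\rm d}W(t)\>$. Applying the one-dimensional It\^o formula to the integrating factor $e^{\lambda_k r}u_k(r)$ yields the coordinate-wise variation-of-constants identity $u_k(t)=e^{-\lambda_k t}\<u_0,e_k\>+\int_0^t e^{-\lambda_k(t-r)}\<F(u(r)),e_k\>\,{\rm d}r+\int_0^t e^{-\lambda_k(t-r)}\<e_k,G(u(r))\,{\rm d}W(r)\>$.

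Finally I would multiply by $e_k$ and sum over $k$, using the spectral representation $S(s)v=\sum_k e^{-\lambda_k s}\<v,e_k\>e_k$. The terms $S(t)u_0$ and the deterministic convolution recombine by dominated convergence in $H$. I expect the main obstacle to be the stochastic term: identifying $\sum_k e_k\int_0^t e^{-\lambda_k(t-r)}\<e_k,G(u(r))\,{\rm d}W(r)\>$ with $\int_0^t S(t-r)G(u(r))\,{\rm d}W(r)$ requires interchanging an infinite sum with a stochastic integral, i.e. a stochastic Fubini theorem. I would justify this by letting $S_N$ denote the truncation of $S$ to ${\rm Span}\{e_1,\cdots,e_N\}$, noting that $S_N(t-r)G(u(r))\to S(t-r)G(u(r))$ in $L^2([0,T]\times\Omega;\LL_2^0)$ by dominated convergence (again using $\|S(t-r)\|_{\LL(H)}\le 1$), and passing to the limit in $L^2(\Omega;H)$ through the It\^o isometry underlying \eqref{bdg2}. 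This produces \eqref{mild} in $H$ for each fixed $t$ almost surely; since both sides admit continuous $H$-valued versions, the identity extends to all $t\in[0,T]$ simultaneously, and as the common value $u(t)$ lies in $\dot H^1$ the equality also holds there, completing the proof.
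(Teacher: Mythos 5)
Your argument is correct, but it takes a genuinely different route from the paper's. The paper's proof is essentially two citations to Liu--R\"ockner's appendix: it invokes \cite[Remark G.0.6]{LR15} to pass from the variational formulation \eqref{var} to the analytically weak formulation tested against $v\in \dot H^2$ (with $A^*v$ on the test function), verifies the integrability condition $\ee\big[\int_0^T (\|u(t)\|+\|F(u(t))\|+\|G(u(t))\|^2_{\LL_2^0})\,{\rm d}t\big]<\infty$ using the embedding \eqref{emb}, Young's inequality and \eqref{bdg2} --- the same bound $\|F(u)\|\le C(1+\|u\|_1^{q-1})$ that underlies your well-posedness check, and the only place $p\ge q$ is used --- and then concludes weak $\Rightarrow$ mild by \cite[Proposition G.0.5(i)]{LR15}. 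You instead reprove the weak-to-mild implication from scratch in the special case at hand, exploiting that $-A$ is self-adjoint with an orthonormal eigenbasis: the collapse ${_{-1}}\<Au,e_k\>_1=-\lambda_k\<u,e_k\>$ is justified since $e_k$ is smooth and $u\in\dot H^1$, the scalar It\^o computation with integrating factor $e^{\lambda_k r}$ is standard, and the reassembly via the truncated semigroup $S_N$, dominated convergence, and the It\^o isometry is sound (the finite-sum interchanges are trivial, so as you effectively observe no genuine stochastic Fubini theorem is needed). Your route buys self-containedness and makes transparent exactly where the hypotheses enter; the paper's route is shorter and, being citation-based, would also cover generators without an eigenbasis. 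One small repair at the end: continuity of the stochastic convolution $S\diamond G(u)$ is not free --- it ordinarily requires the factorization method or a Da Prato--Zabczyk maximal inequality, and with $p\ge q$ possibly equal to $2$ the factorization argument is delicate --- but you do not actually need it. Under the paper's convention that all equations hold in the a.s.\ sense, the identity you establish for each fixed $t$ already realizes Definition \ref{df}(2); alternatively, continuity of $S\diamond G(u)$ follows a posteriori from the identity itself, since $u$, $S(\cdot)u_0$ and the deterministic convolution in \eqref{mild} are all continuous in $H$.
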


\begin{proof}
By \cite[Remark G.0.6]{LR15}, the variational solution $u$ is also a weak solution of Eq. \eqref{see} in the sense that 
\begin{align*}
& \<u(t), v\>
=\<u_0, v\>
+\int_0^t \<u(r), A^* v \> {\rm d}r
+\int_0^t \<F(u(r)), v\> {\rm d}r
+\int_0^t \<G(u(r)) {\rm d}W(r), v\>,
\end{align*}
for any $v\in \dot H^2$ and $t\in [0,T]$.
On the other hand, by the embedding \eqref{emb}, Young inequality, and the Burkholder--Davis--Gundy inequality \eqref{bdg2} we get 
\begin{align*}
& \ee\Big[ \int_0^T \|u(t)\| {\rm d}t
+\int_0^T \|F(u(t))\| {\rm d}t 
+\int_0^T \|G(u(t))\|^2_{\LL_2^0} {\rm d}t \Big] 
\le C \Big(1+\|u\|^q_{L_t^\infty L_\omega^q \dot H^1} \Big),
\end{align*}
which is finite by the assumption $u \in L^p(\Omega; \CC([0,T]; \dot H^1))$ with $p\ge q$.
We conclude by \cite[Proposition G.0.5(i)]{LR15} that this weak solution is also a mild solution.
\end{proof}

\section{Well-posedness and Regularity}
\label{sec3}

Our main aim in this part is to derive the global well-posedness and sharp trajectory Sobolev and H\"older regularity results for Eq. \eqref{see} under Assumptions \ref{ap-f}-\ref{ap-g2}.

\subsection{$\dot H^1$-well-posedness and Moments' Estimation}
\label{sec3.1}

In this part, we will show the well-posedness and moments' estimation of Eq. \eqref{see} in 
$V$, following a known result that Eq. \eqref{see} whose coefficients satisfy certain monotone and Lyapunov conditions has a unique variational solution $\{u(t):\ t\in [0,T]\}$ in the sense of Definition \ref{df} which is essentially bounded a.s. in $V$.

To apply the variational method, let us introduce the Gelfand triple
$\dot H^1=V\hookrightarrow H=L_x^2 \hookrightarrow V^*=\dot H^{-1}$.
By the classical stochastic variational theory, see, e.g., \cite{Liu13(JDE)}, one can prove that the equation
\begin{align} \label{see-LR} 
{\rm d}u(t) &=\mathbf A(u(t)) {\rm d}t+B(u(t)) {\rm d}W(t), 
\quad t\in [0,T];
\quad u(0)=u_0,
\end{align}
whose coefficients satisfy certain monotone and Lyapunov conditions is well-posedness in $V$.
The following result shows that the conditions given in \cite[Theorem 1.1]{Liu13(JDE)} hold for Eq. \eqref{see} under Assumptions \ref{ap-f}-\ref{ap-g}.
We also show that the solution of Eq. \eqref{see} is indeed continuous a.s. and satisfies the moments' estimation \eqref{reg-h1} in $V$.

\begin{tm} \label{tm-h1} 
Let $p\ge q$, $u_0$ is $\FFF_0$-measurable such that $u_0\in L^p(\Omega; \dot H^1)$, and Assumptions \ref{ap-f}-\ref{ap-g} hold.
Then Eq. \eqref{see} exists a unique variational solution $\{u(t):\ t\in [0,T]\}$ in $L^p(\Omega; \CC([0,T]; \dot H^1))$ such that 
\begin{align}\label{reg-h1}
\ee\Big[ \sup_{t\in [0,T]} \|u(t)\|^p_1 \Big] 
+\ee \Big[\int_0^T \|u(t)\|_1^{p-2} \|A u(t) \|^2 {\rm d}t\Big]
 \le C\Big(1+ \ee\Big[ \|u_0\|^p_1\Big] \Big).
\end{align}
\end{tm}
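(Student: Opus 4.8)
The plan is to cast Eq.~\eqref{see} into the variational framework of \cite{Liu13(JDE)} on the Gelfand triple $\dot H^1 \hookrightarrow L_x^2 \hookrightarrow \dot H^{-1}$, with drift $\mathbf A(v) = Av + F(v)$ and diffusion $B(v) = G(v)$, and to verify the structural hypotheses of \cite[Theorem~1.1]{Liu13(JDE)}: hemicontinuity, monotonicity, coercivity, polynomial growth, and a Lyapunov condition. Existence and uniqueness of the variational solution then follow at once, and the moment bound \eqref{reg-h1} is read off from the Lyapunov function. First I would record the two elementary facts that drive everything: ${}_{-1}\langle Av, v\rangle_1 = -\|v\|_1^2$ for the Dirichlet Laplacian, and the continuity \eqref{F-con} of $F$, so that hemicontinuity of $\mathbf A$ is immediate from the linearity of $A$ and a dominated-convergence argument.

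For monotonicity I would combine ${}_{-1}\langle Av, v\rangle_1 = -\|v\|_1^2$ with the monotone estimate \eqref{F-mon} on $F$ and the Lipschitz bound \eqref{con-g} on $G$ to obtain the \emph{global} monotonicity inequality
\begin{align*}
2\,{}_{-1}\langle \mathbf A(u) - \mathbf A(v),\, u-v\rangle_1 + \|G(u)-G(v)\|_{\LL_2^0}^2 \le -2\|u-v\|_1^2 + (2L_f + L_g^2)\|u-v\|^2,
\end{align*}
which is stronger than the local monotonicity required by the framework. Coercivity follows the same way from \eqref{F-coe} and the bound $\|G(v)\|_{\LL_2^0} \le C(1+\|v\|)$ (a consequence of \eqref{con-g}), the negative term $-2\|v\|_1^2$ providing the dissipation. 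The growth/boundedness hypothesis $\|\mathbf A(v)\|_{-1} \le C(1+\|v\|_1^{q-1})$ comes from $\|Av\|_{-1} = \|v\|_1$ together with the polynomial bound \eqref{con-f1}, the conjugacy $(q-1)q'=q$, and the embedding \eqref{emb}.

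The heart of the proof, and the step I expect to be the main obstacle, is verifying the Lyapunov condition with Lyapunov function $\Theta(v) = \|v\|_1^2$ and thereby extracting the $\dot H^1$ estimate \eqref{reg-h1}. The decisive computation, rigorous on the finite-dimensional Galerkin projections $u_N\in V_N$ where It\^o's formula applies classically and $\|Au_N\|$ is finite, reads
\begin{align*}
{\rm d}\|u_N\|_1^2 + 2\|Au_N\|^2\,{\rm d}t = \Big(2\int_\OOO f'(u_N)|\nabla u_N|^2\,{\rm d}x + \|G(u_N)\|_{\LL_2^1}^2\Big){\rm d}t + {\rm d}M_N,
\end{align*}
with $M_N$ a local martingale. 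The crucial point is that the nonlinear contribution is controlled using only $f'(x)\le L_f$ from \eqref{f'1}, giving $\int_\OOO f'(u_N)|\nabla u_N|^2\,{\rm d}x \le L_f\|u_N\|_1^2$; \emph{no bound on $f''$ is required}, which is precisely what lets the argument survive under Assumption~\ref{ap-f} (cf.\ Remark~\ref{rk-h2}). Together with the $\LL_2^1$-growth \eqref{con-g1} of $G$, this yields the Lyapunov inequality $\le C(1+\|u_N\|_1^2)$.

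Finally I would upgrade this to the full $p$-th moment: applying It\^o's formula to $\|u_N\|_1^p$ produces the weighted dissipation $-p\|u_N\|_1^{p-2}\|Au_N\|^2$ matching the second term of \eqref{reg-h1}; I would control the martingale through the Burkholder--Davis--Gundy inequality \eqref{bdg2} and Young's inequality, estimating its quadratic variation by $C\|u_N\|_1^{2p-2}\|G(u_N)\|_{\LL_2^1}^2$ and absorbing a small multiple of $\ee\sup_t\|u_N\|_1^p$, and then close by Gronwall's lemma to obtain \eqref{reg-h1} uniformly in $N$. Passing to the limit via weak and weak-$*$ compactness and lower semicontinuity of the norms, and identifying the limit with the variational solution furnished by \cite[Theorem~1.1]{Liu13(JDE)}, yields both the estimate \eqref{reg-h1} and the continuity $u\in \CC([0,T];\dot H^1)$.
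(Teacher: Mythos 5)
Your verification of the hypotheses of \cite[Theorem 1.1]{Liu13(JDE)} coincides with the paper's: the same Gelfand triple, the same monotonicity estimate combining \eqref{F-mon} and \eqref{con-g}, the same dual argument for $\|\mathbf A(v)\|_{-1}\le C(1+\|v\|_1^{q-1})$, and---the decisive point you correctly isolate---the coercivity/Lyapunov bound \eqref{fn-coe} resting only on $f'\le L_f$ from \eqref{f'1}, with no control of $f''$ (exactly the point of Remark \ref{rk-h2}). Where you diverge is the derivation of \eqref{reg-h1}: you run the $p$-th moment It\^o computation on the Galerkin projections $u_N$ and pass to the limit, whereas the paper applies the infinite-dimensional It\^o formula \cite[Theorem 4.2.5]{LR15} to $v=\nabla u$ \emph{directly on the solution}, which is legitimate because \cite[Theorem 1.1]{Liu13(JDE)} already supplies $u\in L^p(\Omega; L^\infty(0,T;\dot H^1))$ together with $\ee\big[\int_0^T \|u(t)\|_1^{p-2}\|Au(t)\|^2\,{\rm d}t\big]<\infty$, giving the integrability needed to justify It\^o at the limit level. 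Your Galerkin route avoids justifying It\^o in infinite dimensions, at the cost of an identification-of-limits step (which is in any case embedded in Liu's construction), and it delivers the uniform bound correctly via Burkholder--Davis--Gundy---though note that the relevant inequality for your real-valued martingale $M_N$ is the scalar version \eqref{bdg1}, not \eqref{bdg2}, even if the quadratic-variation estimate you state is the right one.

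There is, however, one genuine gap: your final sentence claims that weak and weak-$*$ compactness plus lower semicontinuity of norms yield $u\in\CC([0,T];\dot H^1)$. They do not; those arguments give only $u\in L^\infty(0,T;\dot H^1)$ a.s.\ together with the bound \eqref{reg-h1} for the limit. Pathwise continuity in $\dot H^1$ needs extra input: from $u\in\CC([0,T];H)\cap L^\infty(0,T;\dot H^1)$ one obtains weak continuity in $\dot H^1$, but upgrading to strong continuity requires continuity of $t\mapsto\|u(t)\|_1$, which in turn requires the energy identity at the level of the limit $u$---i.e., precisely the It\^o formula \eqref{ito-h1} applied to the solution itself, from which the paper reads off a.s.\ continuity via the absolute continuity of the Lebesgue and It\^o integrals under \eqref{reg-h1}. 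So your scheme is repairable (carry out the paper's It\^o step on $u$ after the limit passage), but as written the continuity assertion does not follow from compactness and lower semicontinuity alone.
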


\begin{proof}
We first verify the following conditions for Eq. \eqref{see-LR} with $\mathbf A(u)=A u+F(u)$ and $B(u)=G(u)$ for all $u,v,z\in \dot H^1$ and $u_N\in V_N$:
\begin{align}
\rr \ni \lambda \mapsto {_{-1}}\<\mathbf A(u+\lambda v), z\>_1
\  & \text{is continuous}; \label{LR-con} \\
{_{-1}}\<\mathbf A(u)-\mathbf A(v), u-v\>_1+\|B(u)-B(v)\|^2_{\LL_2^0} 
& \le C \|u-v\|^2;  \label{LR-mon} \\
\<\mathbf A(u_N), u_N \>_1
& \le C(1+\|u_N\|^2_1);  \label{LR-coe} \\
\|\mathbf A(u)\|_{-1} 
&\le C(1+\|u\|^{p-1}_1);  \label{LR-gro} \\
\|B(u)\|_{\LL_2^1} & \le C(1+\|u\|_1). \label{LR-g}
\end{align}
The first statement \eqref{LR-con} follows from \eqref{F-con} and the embedding \eqref{emb};
\eqref{LR-mon} and \eqref{LR-g} follow from the monotonicity \eqref{F-mon} and the Lipschitz continuity \eqref{con-g}.
Indeed, 
\begin{align*} 
{_{-1}}\<(A u+F(u))-(A v+F(v), u-v\>_1
+\|G(u)-G(v)\|^2_{\LL_2^0}
\le (L_f+L_g) \|u-v\|^2.
\end{align*}
Similarly, \eqref{LR-coe} follows from \eqref{F-coe} and \eqref{f'1}:
\begin{align} \label{fn-coe} 
\<A u_N+F(u_N), u_N\>_1
\le C (1+\|u_N\|_1^2)-\|A u_N\|^2.
\end{align} 
To show \eqref{LR-gro} with $p\ge q$, we use the dual argument and the embedding \eqref{emb} to get 
\begin{align} \label{-1}
\|F(u)\|_{-1}
& =\sup_{v\in \dot H^1} \frac{{_{-1}}\<F(u), v\>_1}{\|v\|_1}
\le \sup_{v\in \dot H^1} \|F(u)\|_{L_x^{q'}} \frac{\|v\|_{L_x^q}}{\|v\|_1} \nonumber \\
&\le C \big(1+\|u\|^{q-1}_{L_x^q} \big) 
\le C \big(1+\|u\|^{q-1}_1\big),
\end{align}
and thus $\|A u+F(u)\|_{-1} 
\le \|A u\|_{-1}+\|F(u)\|_{-1} 
\le C(1+\|u\|^{q-1}_1)$ for any $u\in \dot H^1$.
Applying \cite[Theorem 1.1]{Liu13(JDE)} and the assumption that $u_0\in L^p(\Omega;\dot H^1)$, we conclude that Eq. \eqref{see} exists a unique variational solution $u$ which belongs to $L^p(\Omega; L^\infty(0,T; \dot H^1))$ such that $\ee \big[\int_0^T \|u(t)\|_1^{p-2} \|A u(t) \|^2 {\rm d}t \big]<\infty$.

It remains to show that the solution $u$ is indeed continuous in $\dot H^1$ a.s. such that \eqref{reg-h1} holds. 
To lighten the notations, here and after we omit the temporal variable when an integral appears. 
Applying It\^o formula (see, e.g., \cite[Theorem 4.2.5]{LR15}) to the functional $F: H \rightarrow \rr$ defined by $F(v):=\frac1p \|v\|^p$ with $v=\nabla u$ leads to
\begin{align} \label{ito-h1}  
\frac1p \|u(t)\|^p_1 
& =\frac1p \|u_0\|^p_1
+\frac{p-2}2 \int_0^t \|u\|_1^{p-4}  \sum_{k \in \nn_+} \<u, G(u) {\bf Q}^\frac12 g_k\>_1^2 {\rm d}r \nonumber  \\ 
&\quad +\int_0^t \|u\|_1^{p-2} \<u, G(u){\rm d}W(r)\>_1 \nonumber  \\
&\quad +\int_0^t \|u\|_1^{p-2} \Big(\<A u+F(u), u\>_1
+\frac 12 \|G(u)\|^2_{\LL_2^1}\Big) {\rm d}r.
\end{align} 
By Assumptions \ref{ap-f}-\ref{ap-g}, we have 
\begin{align*}
&\<A u+F(u), u\>_1 \le C (1+\|u\|_1^2)-\|A u\|^2, \quad u\in\dot H^2,
\end{align*}
and 
\begin{align*}
& \|u\|_1^{p-4}  \sum_{k \in \nn_+} \<u, G(u) {\bf Q}^\frac12 g_k\>_1^2
+ \|u\|_1^{p-2} \|G(u)\|^2_{\LL_2^1} \le C(1+\|u\|_1^p), \quad u\in\dot H^1,
\end{align*}
from which we obtain
\begin{align} \label{eq-u1}
&\|u(t)\|^p_1+p \int_0^t \|u\|_1^{p-2} \|A u\|^2 {\rm d}r  \nonumber  \\
&\le \|u_0\|^p_1
+\int_0^t C(1+\|u\|_1^p)  {\rm d}r
+p \int_0^t \|u\|_1^{p-2} \<u, G(u) {\rm d}W(r)\>_1.
\end{align}

Since $\int_0^\cdot \|u\|_1^{p-2} \<u, G(u) {\rm d}W(r)\>_1$ is a real-valued continuous martingale, by the Burkholder--Davis--Gundy inequality \eqref{bdg1}, Young and H\"older inequalities, and the condition \eqref{con-g1}, we get an estimation of the stochastic integral in \eqref{eq-u1}:
\begin{align} \label{bdg+}  
& \ee\Big[\sup_{t\in [0,T]} \Big|\int_0^t \|u\|_1^{p-2} \<u, G(u) {\rm d}W(r)\>_1 \Big| \Big]  \nonumber     \\
& \le C~ \ee\Big[\Big(\int_0^T \|u\|_1^{2p-2} 
\|G(u)\|^2_{\LL_2^1} {\rm d}t\Big)^\frac12 \Big]  \nonumber  \\
&\le \frac1{2p} \ee\Big[\sup_{t\in [0,T]} \|u(t)\|_1^p \Big]
+C \int_0^T \Big(1+ \ee\Big[\|u(t)\|^p_1\Big] \Big) {\rm d}t.
\end{align}
Now taking $L^1_\omega L_t^\infty$-norm on both sides of \eqref{eq-u1}, we obtain
\begin{align*}
&\ee\Big[ \sup_{t\in [0,T]}  \|u(t)\|^p_1\Big]
+p \ee \Big[\int_0^T \|u(t)\|_1^{p-2} \|\Delta u(t) \|^2 {\rm d}t\Big]  \\
&\le \ee\Big[\|u_0\|^p_1\Big]
+\int_0^T C \Big(1+\ee\Big[\|u(t)\|_1^p\Big]\Big)  {\rm d}t,
\end{align*}
from which we get \eqref{reg-h1} by Gr\"onwall inequality.
It is clear from Eq. \eqref{ito-h1} that $u$ is indeed continuous in $V$ a.s. by the absolute continuity of Lebesgue and It\^o integrals under the estimation \eqref{reg-h1}.
\end{proof}

\begin{rk} \label{rk-h2}
Unlike \eqref{fn-coe} in Theorem \ref{tm-h1}, the following uniform coercive condition does not hold in the $\dot H^2$-setting: 
\begin{align*} 
\<\PP_N F(u_N), u_N\>_2
\le C(1+\|u_N\|_2^2),  \quad u_N \in \dot H^2,
\end{align*}
since the second derivative of $f$ has no upper bound under Assumption \ref{ap-f}.
Thus one could not use the arguments in Theorem \ref{tm-h1} to get the $\dot H^2$-wellposedness of Eq. \eqref{see}.
We refer to \cite[Theorem 2]{NS20(SPDE)} for an $\dot H^2(\OOO')$-well-posedness result for a similar equation as Eq. \eqref{ac} driven by an affine noise for any compact domain $\OOO' \subset \OOO$ and analogous results in weighted Sobolev spaces on the whole spatial domain $\OOO$ using results from \cite{Kry94(PTRF)} along with several $L^p$-estimates.
\end{rk}

\subsection{Sobolev and H\"older Regularity}
\label{sec3.2}

In the present part, we will derive the Sobolev and H\"older regularity for the solution $u$ of Eq. \eqref{see}.
The first type of regularity is an estimation of $u$ under the $L_\omega^p L_t^\infty \dot H^{1+\gamma}$-norm for general $\gamma\in [0,1]$.
Another type of regularity is a temporal H\"older regularity of $u$ under the $L_\omega^p L^2_x$-norm.
These two types of regularity are both useful, in the rest Sections \ref{sec4}-\ref{sec5}, to derive the optimal strong convergence rate of fully discrete numerical approximations for Eq. \eqref{see}. 

The main ingredients are Lemma \ref{lm-sol}, the identification of the variational and mild solutions to Eq. \eqref{see}, in combination with the factorization method.
We first derive a sharp H\"older regularity of $u$ under the $L_\omega^p L_x^2$-norm.  

\begin{tm} \label{tm-hol}
Let $p\ge q$, $u_0\in L^{p(q-1)} (\Omega; \dot H^1)$, and Assumptions \ref{ap-f}-\ref{ap-g} hold.
There exists a constant $C$ such that 
\begin{align} \label{hol-u}
\sup_{0\le s < t \le T}\frac{\|u(t)-u(s)\|_{L_\omega^p L_x^2}}{(t-s)^{1/2}} 
\le C \Big(1+\|u_0\|^{q-1}_{L_\omega^{p(q-1)} \dot H^1} \Big).
\end{align}
\end{tm}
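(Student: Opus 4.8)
The plan is to prove the temporal H\"older regularity \eqref{hol-u} by working with the mild formulation \eqref{mild}, which is legitimate by Theorem \ref{tm-h1} and Lemma \ref{lm-sol}: under the hypotheses $u_0\in L^{p(q-1)}(\Omega;\dot H^1)$ and Assumptions \ref{ap-f}-\ref{ap-g}, the variational solution exists in $L^{p(q-1)}(\Omega;\CC([0,T];\dot H^1))$ and coincides with the mild solution. For $0\le s<t\le T$ I would write the increment $u(t)-u(s)$ using \eqref{mild} and split it into three contributions: the semigroup acting on the initial datum, $(S(t)-S(s))u_0$; the deterministic convolution increment $S*F(u)(t)-S*F(u)(s)$; and the stochastic convolution increment $S\diamond G(u)(t)-S\diamond G(u)(s)$. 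I would estimate each term separately in the $L_\omega^p L_x^2$-norm and show that each is bounded by $C(1+\|u_0\|_{L_\omega^{p(q-1)}\dot H^1}^{q-1})(t-s)^{1/2}$.

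First I would handle the linear term. Writing $S(t)-S(s)=(S(t-s)-\mathrm{Id}_H)S(s)$ and using the smoothing property \eqref{ana} with $\rho=1/2$, namely $\|(-A)^{-1/2}(S(t-s)-\mathrm{Id}_H)\|_{\LL(H)}\le C(t-s)^{1/2}$, together with the fact that $u_0\in\dot H^1$ means $(-A)^{1/2}u_0\in H$, gives $\|(S(t)-S(s))u_0\|\le C(t-s)^{1/2}\|u_0\|_1$, and taking $L_\omega^p$-norms yields the desired bound. For the deterministic convolution I would split the difference into the near-diagonal piece $\int_s^t S(t-r)F(u(r))\,\mathrm{d}r$ and the tail piece $\int_0^s(S(t-r)-S(s-r))F(u(r))\,\mathrm{d}r$; the first is controlled by $\int_s^t\|F(u(r))\|\,\mathrm{d}r$, and the growth bound \eqref{con-f1} with the embedding \eqref{emb} gives $\|F(u(r))\|\le C(1+\|u(r)\|_{L_x^{2(q-1)}}^{q-1})\le C(1+\|u(r)\|_1^{q-1})$, so after taking $L_\omega^p$-norms the regularity \eqref{reg-h1} bounds this by $C(1+\|u_0\|_{L_\omega^{p(q-1)}\dot H^1}^{q-1})(t-s)$; the second is handled by inserting $(-A)^{1/2}(-A)^{-1/2}$, applying \eqref{ana} twice, and integrating the resulting singular kernel.

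The stochastic convolution term is where the main work lies, and I expect it to be the principal obstacle. The standard route through the smoothing property \eqref{ana} applied pathwise is too lossy near the diagonal, so the plan is to invoke the Burkholder--Davis--Gundy inequality \eqref{bdg2} for the It\^o stochastic integrals. Splitting as before into $\int_s^t S(t-r)G(u(r))\,\mathrm{d}W(r)$ and $\int_0^s(S(t-r)-S(s-r))G(u(r))\,\mathrm{d}W(r)$, I would bound the $p$-th moment of each by the $L_\omega^p$-norm of $\big(\int \|S(\cdot)G(u(r))\|_{\LL_2^0}^2\,\mathrm{d}r\big)^{1/2}$. The near-diagonal piece contributes $\int_s^t\|G(u(r))\|_{\LL_2^0}^2\,\mathrm{d}r$, which is of order $(t-s)$ by the Lipschitz bound \eqref{con-g} giving linear growth of $\|G(u)\|_{\LL_2^0}$. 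For the tail piece I would again write $S(t-r)-S(s-r)=(S(t-s)-\mathrm{Id}_H)S(s-r)$ and use \eqref{ana} with $\rho=1/2$ to extract the factor $(t-s)$, leaving $\int_0^s(s-r)^{-1}\|G(u(r))\|_{\LL_2^0}^2\,\mathrm{d}r$; the genuine subtlety is that this integral kernel $(s-r)^{-1}$ is not integrable, so I must instead absorb $(-A)^{1/2}$ onto $G(u(r))$ using \eqref{con-g1}, i.e. bound $\|(-A)^{1/2}S(s-r)G(u(r))\|_{\LL_2^0}\le\|G(u(r))\|_{\LL_2^1}\le L_g(1+\|u(r)\|_1)$ directly, which trades the singular kernel for the integrable weight and furnishes the $(t-s)^{1/2}$ rate. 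Assembling the three bounds and dividing by $(t-s)^{1/2}$ gives \eqref{hol-u}.
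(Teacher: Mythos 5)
Your proposal is correct, but it takes a genuinely different decomposition from the paper's. The paper does not difference the mild formulation from time $0$; it restarts \eqref{mild} at time $s$ (equivalently, applies $S(t-s)$ to the mild identity at time $s$ and subtracts), writing $u(t)-u(s)=(S(t-s)-{\rm Id})u(s)+\int_s^t S(t-r)F(u(r))\,{\rm d}r+\int_s^t S(t-r)G(u(r))\,{\rm d}W(r)$, so that every integral lives on $[s,t]$ and no tail terms over $[0,s]$ arise at all: the first term is handled by \eqref{ana} with $\rho=1/2$ together with $\sup_s\|u(s)\|_{L_\omega^p\dot H^1}$ from \eqref{reg-h1}, and the drift and noise terms need only the uniform boundedness of $S(\cdot)$ on $H$ plus \eqref{bdg2}; in particular, in the paper's argument the $\LL_2^1$-condition \eqref{con-g1} enters only to guarantee linear growth of $\|G(u)\|_{\LL_2^0}$, never for smoothing. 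You instead keep the convolutions from time $0$ and split each into a near-diagonal piece on $[s,t]$ and a tail piece $\int_0^s(S(t-r)-S(s-r))(\cdots)$, which forces you to estimate semigroup differences; your key extra idea --- avoiding the non-integrable kernel $(s-r)^{-1}$ in the stochastic tail by moving $(-A)^{1/2}$ onto $G(u(r))$ and invoking the full strength of \eqref{con-g1} via $\|(-A)^{1/2}S(s-r)G(u(r))\|_{\LL_2^0}\le\|G(u(r))\|_{\LL_2^1}$ --- is exactly what makes your route close at the sharp rate $(t-s)^{1/2}$, and your treatment of the deterministic tail with the integrable kernel $(s-r)^{-1/2}$ is likewise sound. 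The trade-off: the paper's restart decomposition is shorter and uses weaker information on $G$, while yours avoids invoking the restarted mild identity (it works directly from \eqref{mild} as stated) at the cost of the tail estimates and a genuine use of the spatial regularity of the noise coefficient; both yield the same bound with the same dependence on $\|u_0\|_{L_\omega^{p(q-1)}\dot H^1}^{q-1}$.
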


\begin{proof}
By the mild formulation \eqref{mild} and Minkovskii inequality, we get 
\begin{align*}
\|u(t)-u(s)\|_{L_\omega^p L_x^2} \le I+II+III,
\end{align*}
where
\begin{align*}
I   &=\|(S(t-s)-{\rm Id})u(s)\|_{L_\omega^p L_x^2}, \\
II  &=\Big\|\int_s^t S(t-r) F(u(r)) {\rm d}r \Big\|_{L_\omega^p L_x^2}, \\
III &=\Big\|\int_s^t S(t-r) G(u(r)) {\rm d}W(r) \Big\|_{L_\omega^p L_x^2}.
\end{align*}
The smoothing property \eqref{ana} and the estimation \eqref{reg-h1} yield that 
\begin{align} \label{I}
I & \le \|S(t-s)-{\rm Id}\|_{\LL(H; \dot H^{-1})} 
\|u(s)\|_{L_\omega^p \dot H^1}
\le C(t-s)^\frac12 (1+\|u_0\|_{L_\omega^p \dot H^1}).
\end{align}

For the second term $II$, by Minkovskii inequality, the ultracontractive and smoothing properties \eqref{ana}, the embedding \eqref{emb}, and the estimation \eqref{reg-h1}, we have
\begin{align} \label{II}
II &\le \int_s^t \|S(t-r)\|_{\LL(H)}  
\|F(u(r))\|_{L_\omega^p L_x^2} {\rm d}r \nonumber \\
&\le C \|F(u)\|_{L_t^\infty L_\omega^p L_x^2} (t-s)
\le C \big(1+\|u_0\|_{L_\omega^{p(q-1)} \dot H^1}^{q-1} \big) (t-s).
\end{align}
For the last term $III$, by the Burkholder--Davis--Gundy inequality \eqref{bdg2}, the conditions \eqref{con-g}-\eqref{con-g1}, the smoothing property \eqref{ana}, and the estimation \eqref{reg-h1}, we get 
\begin{align} \label{III}
III &\le \Big(\int_s^t \|S(t-r)\|^2_{\LL(H)} \|G(u(r))\|^2_{L_\omega^p \LL_2^0} {\rm d}r \Big)^\frac12 \nonumber \\
&\le C \|G(u)\|_{L_t^\infty L_\omega^p \LL_2^0}  (t-s)^\frac12 
\le C \big(1+\|u_0\|_{L_\omega^p \dot H^1} \big) (t-s)^\frac12.
\end{align}
Combining the above estimations \eqref{I}-\eqref{III}, we conclude \eqref{hol-u}.
\end{proof}

\begin{rk}
One can use the factorization method as in Proposition \ref{prop-h1+} and then apply Lemma \ref{hol-cha} to derive $u \in L_\omega^p \CC^{\delta'} L_x^2$ for a $\delta'>0$.
However, our main interest here is to derive the sharp H\"older exponent $\delta$ of $u$ such that $u \in \CC^\delta L_\omega^p L_x^2$.
In this case, the derived temporal strong convergence rate of numerical approximations for Eq. \eqref{see} is optimal (and without any infinitesimal factor).
\end{rk}

Next, we study the Sobolev regularity for the solution $u$ of Eq. \eqref{see}.
Note that it seems impossible to use Theorem \ref{tm-h1} by variational approach to derive the $\dot H^2$-well-posedness of Eq. \eqref{see}; see Remark \ref{rk-h2}.
For simplicity, in the rest we always assume that the initial datum $u_0$ is a nonrandom function; a similar argument can handle the case of random initial data which possess finite, sufficiently large algebraic order moments.

The main tool is the following factorization formulas for deterministic and stochastic convolutions:
\begin{align}
S*F(u)(t)
& =\frac{\sin(\pi \alpha)}{\pi}\int_0^t (t-r)^{\alpha-1} S(t-r) F_\alpha(r) {\rm d}r, \label{fa} \\
S\diamond G(u)(t)
&=\frac{\sin(\pi \alpha)}{\pi}\int_0^t (t-r)^{\alpha-1} S(t-r) G_\alpha(r) {\rm d}r, \label{wa}
\end{align}
where $\alpha\in (0,1)$ and
\begin{align*}
F_\alpha(t):&=\int_0^t (t-r)^{-\alpha} S(t-r) F(u(r)){\rm d}r, \\
G_\alpha(t):&=\int_0^t (t-r)^{-\alpha} S(t-r) G(u(r)){\rm d}W(r),
\end{align*}
for $t\in [0,T]$, see, e.g., \cite{HHL19(JDE)}, \cite{HL19(JDE)}, and references cited therein.

To this purpose, we need the following characterization, see, e.g., \cite[Proposition 4.1]{HL19(JDE)}, about the convolution operator
$R_\alpha$ defined by 
\begin{align}\label{ra}
R_\alpha F(t):=\int_0^t (t-r)^{\alpha-1} S(t-r) F(r) {\rm d} r,\quad t\in [0,T].
\end{align}

\begin{lm} \label{hol-cha} 
Let $p>1$, $1/p<\alpha<1$ and $\rho,\theta,\delta\ge 0$.
Then $R_\alpha$ defined by \eqref{ra} 
is a bounded linear operator from $L^p(0,T; \dot H^\rho)$ to $\CC^{\alpha-1/p-(\theta-\rho)/2}([0,T]; \dot H^\theta)$ for $\theta>\rho$ and $\alpha>(\theta-\rho)/2+1/p$.
\end{lm}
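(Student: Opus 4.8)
The plan is to reduce every bound to scalar convolution integrals through the analytic smoothing estimates \eqref{ana}, and then to verify boundedness into $L^\infty(0,T;\dot H^\theta)$ and the H\"older continuity separately. Set $\sigma:=(\theta-\rho)/2\ge 0$, so the standing hypothesis reads $\alpha>\sigma+1/p$, and note the single computation used everywhere: since $(-A)^{\theta/2}S(\tau)(-A)^{-\rho/2}=(-A)^{\sigma}S(\tau)$, the first bound in \eqref{ana} yields $\|(-A)^{\theta/2}S(\tau)F(r)\|\le C\tau^{-\sigma}\|F(r)\|_\rho$ for every $F(r)\in\dot H^\rho$ and $\tau>0$. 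Applying $(-A)^{\theta/2}$ under the integral sign in \eqref{ra} then gives $\|R_\alpha F(t)\|_\theta\le C\int_0^t (t-r)^{\alpha-1-\sigma}\|F(r)\|_\rho\,{\rm d}r$, and a H\"older inequality in time with conjugate exponents $p,p'$ bounds this by $C(\int_0^t (t-r)^{(\alpha-1-\sigma)p'}{\rm d}r)^{1/p'}\|F\|_{L^p(0,T;\dot H^\rho)}$. The time integral converges exactly when $(\alpha-1-\sigma)p'>-1$, i.e.\ when $\alpha>\sigma+1/p$, which is the hypothesis; this produces $\sup_{t\in[0,T]}\|R_\alpha F(t)\|_\theta\le C\|F\|_{L^p(0,T;\dot H^\rho)}$.

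For the H\"older estimate I would fix $0\le s<t\le T$ and write $R_\alpha F(t)-R_\alpha F(s)=J_1+J_2$, where $J_1:=\int_s^t (t-r)^{\alpha-1}S(t-r)F(r)\,{\rm d}r$ collects the new mass on $[s,t]$ and $J_2:=\int_0^s[(t-r)^{\alpha-1}S(t-r)-(s-r)^{\alpha-1}S(s-r)]F(r)\,{\rm d}r$ is the kernel difference. The term $J_1$ is treated exactly as the uniform bound but with the integral confined to $[s,t]$; the substitution $\tau=t-r$ turns the resulting time factor into a constant multiple of $(t-s)^{\alpha-1/p-\sigma}$, which is precisely the claimed H\"older exponent. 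For $J_2$ I would split the integrand as
\[
[(t-r)^{\alpha-1}-(s-r)^{\alpha-1}]S(t-r)+(s-r)^{\alpha-1}S(s-r)[S(t-s)-{\rm Id}],
\]
estimating the first piece by $\|(-A)^{\theta/2}S(t-r)(-A)^{-\rho/2}\|_{\LL(H)}\le C(t-r)^{-\sigma}$, and the second by inserting an auxiliary exponent $\kappa\in(0,1]$ so that both parts of \eqref{ana} apply, namely $\|(-A)^{\sigma+\kappa}S(s-r)\|_{\LL(H)}\le C(s-r)^{-\sigma-\kappa}$ and $\|(-A)^{-\kappa}(S(t-s)-{\rm Id})\|_{\LL(H)}\le C(t-s)^{\kappa}$.

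The hard part will be extracting the \emph{sharp} exponent $\alpha-1/p-\sigma$ from $J_2$ rather than a strictly smaller one: taking $\kappa$ equal to the target makes the remaining time integral $\int_0^s(s-r)^{-1}{\rm d}r$ diverge logarithmically, and the mean-value bound for the power weight degenerates near $r=s$. I would resolve this by splitting the $r$-integration into a near-diagonal region $\{s-r\le t-s\}$ and a bulk region $\{s-r>t-s\}$. On the near-diagonal region, crude bounds (discard the subtracted power term, and bound $S(t-s)-{\rm Id}$ by a constant) together with $\|(-A)^\sigma S(s-r)\|_{\LL(H)}\le C(s-r)^{-\sigma}$ and the integrability $\alpha>\sigma+1/p$ already give $(t-s)^{\alpha-1/p-\sigma}$ after H\"older. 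On the bulk region I would use the elementary estimate $|(t-r)^{\alpha-1}-(s-r)^{\alpha-1}|\le C(t-s)(s-r)^{\alpha-2}$ for the first piece, and for the second choose $\kappa>\alpha-\sigma-1/p$ (possible since this target is $<1$) so that the resulting time integral is dominated by its endpoint $s-r=t-s$; both bulk contributions again collapse to $(t-s)^{\alpha-1/p-\sigma}$. Combining $J_1$ and $J_2$ yields $\|R_\alpha F(t)-R_\alpha F(s)\|_\theta\le C(t-s)^{\alpha-1/p-\sigma}\|F\|_{L^p(0,T;\dot H^\rho)}$, which, with the uniform bound above, is exactly the asserted boundedness of $R_\alpha$ into $\CC^{\alpha-1/p-(\theta-\rho)/2}([0,T];\dot H^\theta)$.
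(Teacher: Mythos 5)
Your proof is correct, but note that the paper does not actually prove Lemma \ref{hol-cha}: it quotes the result from \cite[Proposition 4.1]{HL19(JDE)}, so there is no in-paper argument to compare against. What you wrote is essentially the standard factorization-lemma proof in the style of that reference (and of Da Prato--Zabczyk): the uniform bound via the smoothing estimate $\|(-A)^{\sigma}S(\tau)\|_{\LL(H)}\le C\tau^{-\sigma}$ plus H\"older in time, with the integrability condition $(\alpha-1-\sigma)p'>-1$ reproducing exactly the hypothesis $\alpha>\sigma+1/p$; and the increment split into the new-mass term $J_1$, the kernel-difference term, and the semigroup-difference term. Your diagnosis of the delicate point is accurate and your fix is the right one: taking $\kappa$ equal to the target exponent $\beta=\alpha-1/p-\sigma$ is borderline ($(\alpha-1-\sigma-\beta)p'=-1$), and the near-diagonal/bulk split resolves it sharply --- on $\{s-r\le t-s\}$ the crude bounds give $(t-s)^{\beta}$ directly from the same integrability condition, while on the bulk the mean-value bound $|(t-r)^{\alpha-1}-(s-r)^{\alpha-1}|\le C(t-s)(s-r)^{\alpha-2}$ works because $\alpha<1<1+\sigma+1/p$ makes the bulk integral converge at its lower endpoint, and choosing $\kappa\in(\beta,1]$ (possible since $\beta<1-1/p<1$) does the same for the $S(t-s)-{\rm Id}$ piece. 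All exponents collapse to exactly $\beta$ with no logarithmic or $\epsilon$-loss, which is what the lemma asserts. Two cosmetic remarks: the parameter $\delta$ in the lemma statement is vacuous (an artifact of the paper), so you need not account for it; and a one-line justification that $(-A)^{\theta/2}$ may be taken under the Bochner integral (the integrand bound you derive gives absolute integrability in $\dot H^\theta$ for a.e.\ $t$, and closedness of $(-A)^{\theta/2}$ does the rest) would make the first step airtight.
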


Using Lemma \ref{hol-cha} and a bootstrapping argument, we have the following $\dot H^{1+\gamma}$-regularity of the solution to Eq. \eqref{see} for any $\gamma\in (0,1)$.

\begin{prop} \label{prop-h1+}
Let $\gamma\in (0,1)$ and $u_0\in \dot H^{1+\gamma}$.
Under Assumptions \ref{ap-f}-\ref{ap-g}, the solution $u$ of Eq. \eqref{see} belongs to $L^p(\Omega;  \CC([0,T]; \dot H^{1+\gamma}))$ for any $p\ge 1$ such that
\begin{align} \label{reg-h1+}
& \ee\Big[ \sup_{t\in [0,T]} \|u(t)\|^p_{1+\gamma} \Big] 
\le C \Big(1+\|u_0\|^{p(q-1)}_{1+\gamma} \Big).
\end{align}
\end{prop}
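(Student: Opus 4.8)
The plan is to use the mild formulation \eqref{mild} together with the factorization formulas \eqref{fa}--\eqref{wa} and the characterization of $R_\alpha$ in Lemma \ref{hol-cha}, driven by a bootstrapping argument that progressively lifts the spatial regularity from the $\dot H^1$-bound \eqref{reg-h1} of Theorem \ref{tm-h1} up to $\dot H^{1+\gamma}$. Writing $u(t)=S(t)u_0+S*F(u)(t)+S\diamond G(u)(t)$, I would estimate the three pieces separately. The linear term is harmless: by the smoothing property \eqref{ana} and $u_0\in\dot H^{1+\gamma}$, one has $\sup_{t}\|S(t)u_0\|_{1+\gamma}\le\|u_0\|_{1+\gamma}$. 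The real work is in the two convolutions, each of which I rewrite via \eqref{fa}--\eqref{wa} as $R_\alpha$ applied to the auxiliary processes $F_\alpha$ and $G_\alpha$, then invoke Lemma \ref{hol-cha} with $\theta=1+\gamma$ to conclude $S*F(u),\ S\diamond G(u)\in\CC([0,T];\dot H^{1+\gamma})$ with $L^p_\omega$ control.

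The key steps, in order, are as follows. First I would fix an $\alpha\in(0,1)$ large enough that Lemma \ref{hol-cha} applies with the target $\theta=1+\gamma$ and a source exponent $\rho$ to be chosen; the constraint $\alpha>(\theta-\rho)/2+1/p$ forces $\alpha$ close to $1$ and $\rho$ close to $1+\gamma$, which is precisely where bootstrapping enters. Second, for the deterministic convolution I would bound $F_\alpha$ in $L^p(\Omega;L^p(0,T;\dot H^\rho))$: using the growth \eqref{con-f1}, the embedding \eqref{emb}, and the smoothing estimate \eqref{ana} to absorb the singular kernel $(t-r)^{-\alpha}$, I obtain control of $\|F(u(r))\|$ in terms of $\|u(r)\|_1^{q-1}$, hence in terms of the already-established bound \eqref{reg-h1}; the polynomial growth is exactly the source of the $\|u_0\|_{1+\gamma}^{q-1}$ dependence on the right side of \eqref{reg-h1+}. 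Third, for the stochastic convolution I would estimate $G_\alpha$ by the Burkholder--Davis--Gundy inequality \eqref{bdg2} together with the growth condition \eqref{con-g1}, again using \eqref{ana} to handle the singularity, which yields $G_\alpha\in L^p(\Omega;L^p(0,T;\dot H^1))$; then Lemma \ref{hol-cha} lifts this to $\dot H^{1+\gamma}$ provided $\alpha$ exceeds $\gamma/2+1/p$. Finally I would combine the three bounds, take the supremum in $t$, and apply the $L^p_\omega$-integrability of $\sup_t\|u(t)\|_1$ from \eqref{reg-h1} to close the estimate \eqref{reg-h1+}.

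The main obstacle I anticipate is the mismatch between the regularity one can feed into $R_\alpha$ and the regularity one wants out of it: Lemma \ref{hol-cha} only gains $\theta-\rho<2\alpha-2/p<2$ derivatives per application, so a single pass starting from the $\dot H^1$-bound will not in general reach $\dot H^{1+\gamma}$ for $\gamma$ near $1$ while keeping $\alpha<1$. This is why the argument must be iterative. Concretely, I would first run the above scheme with $\rho=1$ to deduce $u\in L^p(\Omega;\CC([0,T];\dot H^{1+\gamma'}))$ for some intermediate $\gamma'<\gamma$; the improved bound then allows me to take $\rho=1+\gamma'$ as the new source exponent in a second application, raising the output to $\dot H^{1+\gamma}$. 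Care is needed to verify that at each stage $F_\alpha$ and $G_\alpha$ genuinely lie in the required source space $L^p(0,T;\dot H^\rho)$ — for $F_\alpha$ this means checking that the Nemytskii operator $F$ maps $\dot H^{1+\gamma'}$ boundedly into a space compatible with the smoothing kernel, and for $G_\alpha$ it relies on Assumption \ref{ap-g}, though note that pushing $\rho$ strictly above $1$ for the stochastic term is exactly what \emph{cannot} be done under \eqref{con-g1} alone (this is the role reserved for Assumption \ref{ap-g2} in the $\gamma=1$ endpoint of Theorem \ref{tm-h2}). Keeping the two convolutions on a coherent bootstrap ladder, and tracking the polynomial exponent $q-1$ through each iteration so that it does not compound, will be the delicate bookkeeping of the proof.
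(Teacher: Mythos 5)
Your toolkit is the paper's (mild formulation \eqref{mild}, factorization \eqref{fa}--\eqref{wa}, Lemma \ref{hol-cha}, the $\dot H^1$ moment bound \eqref{reg-h1}), but the mechanism by which you reach $\gamma$ close to $1$ does not work. The stochastic factorization \eqref{wa} forces $\alpha<1/2$: $G_\alpha(t)$ is a stochastic integral with kernel $(t-r)^{-\alpha}$, and the Burkholder--Davis--Gundy bound \eqref{bdg2} requires $\int_0^t (t-r)^{-2\alpha}\,{\rm d}r<\infty$. So your opening move --- ``the constraint $\alpha>(\theta-\rho)/2+1/p$ forces $\alpha$ close to $1$'' --- is unavailable precisely for the term that needs it. With $\alpha<1/2$ and, as you yourself concede, the source exponent for $G_\alpha$ capped at $\rho=1$ under \eqref{con-g1}, Lemma \ref{hol-cha} yields at most $\theta<1+2\alpha-2/p<2-2/p$, i.e., $\gamma<1-2/p$; and your proposed bootstrap on $\rho$ cannot repair this, because re-running the argument with an improved bound on $u$ does not raise the $\dot H^1$ ceiling on $G_\alpha$ --- the second iteration reproduces exactly the same stochastic estimate as the first. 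The missing idea is the paper's: prove \eqref{reg-h1+} for $\gamma\in[0,1-2/p)$ with $\alpha\in(1/p,1/2)$ fixed, then remove the restriction by taking $p$ large (legitimate because \eqref{reg-h1} holds for every $p$, and the case of small $p$ follows by H\"older's inequality). No iteration in $\rho$ occurs anywhere in the proof.

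The deterministic convolution is also handled more simply than you propose: since the analytic semigroup gains almost two derivatives, the paper does not factorize $S*F(u)$ at all for $\gamma<1$; it estimates directly $\|S*F(u)(t)\|_{1+\gamma}\le C\int_0^t (t-r)^{-\frac{1+\gamma}{2}}\|F(u(r))\|\,{\rm d}r$ via \eqref{ana}, with integrable kernel since $\gamma<1$, and closes with \eqref{con-f'}, \eqref{emb}, and \eqref{reg-h1} --- that single step is the source of the exponent $q-1$ in \eqref{reg-h1+}. Your plan to feed $F_\alpha$ into Lemma \ref{hol-cha} with $\rho$ near $1+\gamma'$ would in any case collide with the fact that the Nemytskii operator does not map into $\dot H^s$ for $s\ge 1/2$ without boundary compatibility (this is why Lemma \ref{lm-fx} restricts to $\beta\in[0,1/2)$; that lemma, together with the $L_x^\infty$ bound of Corollary \ref{cor-infty}, is deployed only at the genuine $\dot H^2$ endpoint in Theorem \ref{tm-h2}, under the extra Assumption \ref{ap-g2}). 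In short: the one-pass structure plus ``take $p$ large'' replaces your bootstrap ladder; as written, your argument stalls at $\gamma<1-2/p$ for fixed $p$ and offers no valid route beyond it.
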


\begin{proof}
Let $p>2$ and $\alpha\in (1/p, 1/2)$. 
For any $\gamma\in [0,1)$, Minkovskii and Young inequalities yield that 
\begin{align*}
\|S*F(u)\|_{L_\omega^p L_t^\infty \dot H^{1+\gamma}}
&\le \Big\|\int_0^t \|(-A)^\frac{1+\gamma}2 S(t-r) F(u(r))\| {\rm d}r \Big\|_{L_\omega^p L_t^\infty} \nonumber  \\
&\le C\big\| t^{-\frac{1+\gamma}2} * \|F(u)\| \big\|_{L_\omega^p L_t^\infty} 
\le C T^\frac{1-\gamma}2 \|F(u)\|_{L_\omega^p L_t^\infty L_x^2}.
\end{align*}
Then we get by the growth condition \eqref{con-f'}, the embedding \eqref{emb}, and the estimation \eqref{reg-h1} that 
\begin{align} \label{est-sf}
\|S*F(u)\|_{L_\omega^p L_t^\infty \dot H^{1+\gamma}}
\le C \big(1+\|u\|^{q-1}_{L_\omega^{p(q-1)} L_t^\infty \dot H^1}\big)
\le C \big(1+\|u_0\|^{q-1}_1 \big).
\end{align}

For the stochastic convolution, we use the factorization formula \eqref{wa}.
By the growth condition \eqref{con-g1}, we have
\begin{align*}
\|G_\alpha\|^p_{L_\omega^p L_t^p \dot H^1}
\le C \Big[\int_0^T \Big(\int_0^t (t-r)^{-2\alpha} {\rm d}r \Big)^\frac p2 {\rm d}t\Big] 
\Big(1+\|u\|_{L_\omega^p L_t^\infty\dot H^1}^p \Big)
\le C \Big(1+\|u_0\|^p_1 \Big).
\end{align*}
Thus Lemma \ref{hol-cha} with $\rho=1$ and $\theta=1+\gamma$ for $\gamma \in [0,1-2/p)$ yields that $S\diamond G(u)\in L^p(\Omega; \CC([0,T]; \dot H^{1+\gamma}))$ and 
\begin{align} \label{est-sg}
\|S\diamond G(u)\|_{L_\omega^p L_t^\infty\dot H^{1+\gamma}}
\le C \|G_\alpha\|_{L_\omega^p L_t^p \dot H^1}
\le C \big(1+\|u_0\|_1 \big).
\end{align}
Combining the estimations \eqref{est-sf}-\eqref{est-sg} with the standard estimation for $S(\cdot) u_0$ that
\begin{align} \label{reg-x0}
\|S(\cdot)u_0\|_{L_\omega^p L_t^\infty\dot H^{1+\gamma}} 
\le C\|u_0\|_{1+\gamma},
\end{align}
we conclude \eqref{reg-h1+} for $\gamma \in [0,1-2/p)$.
Taking $p$ large enough, we prove the result for general $\gamma\in [0,1)$. 
\end{proof}

\begin{cor} \label{cor-infty}
Let $u_0\in \dot H^1\cap L_x^\infty$.
Under Assumptions \ref{ap-f}-\ref{ap-g}, the solution $u$ of Eq. \eqref{see} belongs to $L^p(\Omega;  \CC([0,T]; \dot H^1))
\cap L^p(\Omega;  \CC([0,T]; L_x^\infty))$ for any $p\ge 1$ such that
\begin{align} \label{h1+infty}
& \ee\Big[ \sup_{t\in [0,T]} \|u(t)\|^p_1 \Big] 
+\ee\Big[ \sup_{t\in [0,T]} \|u(t)\|^p_{L_x^\infty} \Big] 
\le C \Big(1+\|u_0\|^{p(q-1)}_1+\|u_0\|^p_{L_x^\infty}\Big).
\end{align}
\end{cor}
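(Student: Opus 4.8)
The $\dot H^1$ half of \eqref{h1+infty} requires nothing new: applying the moment estimate \eqref{reg-h1} of Theorem \ref{tm-h1} with the exponent $p$ replaced by $p(q-1)\ge q$ (recall $u_0$ is taken nonrandom, hence lies in $L^{p(q-1)}(\Omega;\dot H^1)$) already yields both $\ee[\sup_t\|u(t)\|_1^p]\le C(1+\|u_0\|_1^{p(q-1)})$ and the continuity of $u$ in $\dot H^1$. So the entire work is the $L_x^\infty$ bound, and my plan is to read it off the mild formulation \eqref{mild}, $u=S(\cdot)u_0+S*F(u)+S\diamond G(u)$, estimating the three terms separately. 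The guiding principle — which is exactly what circumvents the obstacle described at the end — is to never measure the drift $F(u)$ in $L_x^\infty$ but only in $L_x^2$, and then let the smoothing of the semigroup promote the convolutions to $L_x^\infty$.

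First I would treat the free term: since the Dirichlet heat semigroup $S$ is sub-Markovian, it is a contraction on $L_x^\infty$, so $\|S(t)u_0\|_{L_x^\infty}\le\|u_0\|_{L_x^\infty}$ uniformly in $t$, contributing the $\|u_0\|_{L_x^\infty}^p$ term. For the two convolutions I would invoke the factorization formulas \eqref{fa}--\eqref{wa}, writing $S*F(u)=\tfrac{\sin(\pi\alpha_1)}{\pi}R_{\alpha_1}F_{\alpha_1}$ and $S\diamond G(u)=\tfrac{\sin(\pi\alpha_2)}{\pi}R_{\alpha_2}G_{\alpha_2}$ with the operator $R_\alpha$ of \eqref{ra} and two exponents $\alpha_1,\alpha_2\in(0,1)$ fixed below. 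By the growth bound \eqref{con-f1} and the embedding \eqref{emb} one has $\|F(u(r))\|_{L_x^2}\le C(1+\|u(r)\|_1^{q-1})$, whence, via Young's inequality in time (using $\alpha_1<1$) and \eqref{reg-h1}, $F_{\alpha_1}$ is bounded in $L_\omega^p L_t^p H$ by $C(1+\|u_0\|_1^{q-1})$; likewise \eqref{con-g1} controls $G_{\alpha_2}$ in $L_\omega^p L_t^p\dot H^1$ by $C(1+\|u_0\|_1)$ provided $\alpha_2<1/2$, exactly as in the proof of Proposition \ref{prop-h1+}. The crucial point is to land in a space embedding into $L_x^\infty$: fixing $\theta>d/2$ so that $\dot H^\theta\hookrightarrow L_x^\infty$, Lemma \ref{hol-cha} maps $F_{\alpha_1}\in L_t^p\dot H^0$ into $\CC([0,T];\dot H^\theta)$ once $\alpha_1>\theta/2+1/p$, and $G_{\alpha_2}\in L_t^p\dot H^1$ into $\CC([0,T];\dot H^\theta)$ once $\alpha_2>(\theta-1)/2+1/p$.

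These two requirements are compatible with the constraints $\alpha_1<1$ and $\alpha_2<1/2$ once $p$ is taken large: in the worst case $d=3$ one takes $\theta$ slightly above $3/2$, $\alpha_1\in(3/4+1/p,1)$ and $\alpha_2\in(1/4+1/p,1/2)$, which is nonempty for $p>4$. Taking $L_\omega^p L_t^\infty L_x^\infty$-norms, using the boundedness of $R_{\alpha_i}$ and the embedding $\dot H^\theta\hookrightarrow L_x^\infty$, the deterministic convolution is bounded by $C(1+\|u_0\|_1^{q-1})$ and the stochastic one by $C(1+\|u_0\|_1)$; combined with the free term and raised to the $p$-th power this gives the $L_x^\infty$ estimate of \eqref{h1+infty}, while the Hölder continuity into $\dot H^\theta$ furnished by Lemma \ref{hol-cha} (together with the continuity of $S(\cdot)u_0$) supplies the path continuity in $L_x^\infty$. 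The estimate for an arbitrary $p\ge1$ then follows from the case of large $p$ by H\"older's inequality on the finite measure space $\Omega$.

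The main obstacle is precisely the polynomial drift. A naive $L_x^\infty$ estimate of $S*F(u)$ using only the $L_x^\infty$-contractivity of $S$ would produce the term $\int_0^t\|F(u(r))\|_{L_x^\infty}\,{\rm d}r$, controlled only by $\int_0^t(1+\|u(r)\|_{L_x^\infty}^{q-1})\,{\rm d}r$, a superlinear self-feedback in the unknown $\sup_r\|u(r)\|_{L_x^\infty}$ that no Gr\"onwall argument can close for $q>2$. The strategy breaks this feedback: measuring $F(u)$ in $L_x^2$ and exploiting \eqref{emb} expresses every bound solely through the already-controlled quantity $\|u\|_{L_\omega^{p(q-1)}L_t^\infty\dot H^1}$ of Theorem \ref{tm-h1}, turning a circular nonlinear inequality into a one-directional one. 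The only genuinely dimension-sensitive point is checking that $\alpha_1,\alpha_2$ can simultaneously satisfy $\alpha_1>\theta/2+1/p$ and $\alpha_2>(\theta-1)/2+1/p$ with $\theta>d/2$, which in $d=3$ is what forces $q\le4$ in \eqref{emb} and $p$ large, consistently with the standing restriction \eqref{q}.
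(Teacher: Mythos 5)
Your proposal is correct and follows essentially the same route as the paper: the paper's proof likewise rests on the mild formulation, the $L_x^\infty$-contractivity of $S(\cdot)$ for the free term, and bounds on $S*F(u)$ and $S\diamond G(u)$ in $L_\omega^p L_t^\infty \dot H^{1+\beta}$ (inherited from the proof of Proposition \ref{prop-h1+}, with $F(u)$ measured only in $L_x^2$ via \eqref{con-f1} and \eqref{emb}) followed by the embedding $\dot H^{1+\beta}\hookrightarrow L_x^\infty$ for $\beta\in(1/2,1]$ --- i.e., landing in a Sobolev space of order above $d/2$, exactly your $\dot H^\theta$, combined with \eqref{reg-h1} for the $\dot H^1$ half. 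The only cosmetic deviation is that you run the factorization and Lemma \ref{hol-cha} machinery on the deterministic convolution as well, whereas the paper handles $S*F(u)$ by the direct smoothing/Young bound $\|(-A)^{\frac{1+\beta}2}S(t-r)\|_{\LL(H)}\le C(t-r)^{-\frac{1+\beta}2}$ and reserves factorization for the stochastic convolution alone.
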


\begin{proof}
The proof of Proposition \ref{prop-h1+} and the embedding 
$\dot H^{1+\beta}\hookrightarrow L_x^\infty$ for any $\beta\in (1/2,1]$
imply that  
\begin{align*}
& \|S*F(u)\|_{L_\omega^p L_t^\infty L_x^\infty}
+\|S\diamond G(u)\|_{L_\omega^p L_t^\infty L_x^\infty} \\
&\le \|S*F(u)\|_{L_\omega^p L_t^\infty \dot H^{1+\beta}}
+\|S\diamond G(u)\|_{L_\omega^p L_t^\infty \dot H^{1+\beta}} \\
&\le C \big(1+\|u\|^{q-1}_{L_\omega^{p(q-1)} L_t^\infty \dot H^1}\big)
+C \big(1+\|u\|_{L_\omega^p L_t^\infty\dot H^1}\big)
\le C \big(1+\|u_0\|^{q-1}_1 \big).
\end{align*}
Similarly to \eqref{reg-x0}, we have
$\|S(\cdot)u_0\|_{L_\omega^p L_t^\infty L_x^\infty} 
\le C\|u_0\|_{L_x^\infty}$.
Thus we get 
\begin{align} \label{infty0}
\|u\|_{L_\omega^p L_t^\infty L_x^\infty}
& \le \|S(\cdot)u_0\|_{L_\omega^p L_t^\infty L_x^\infty}
+\|S*F(u)\|_{L_\omega^p L_t^\infty L_x^\infty}
+\|S\diamond G(u)\|_{L_\omega^p L_t^\infty L_x^\infty}  \nonumber  \\
&\le C \big(1+\|u_0\|^{q-1}_1+\|u_0\|_{L_x^\infty} \big).
\end{align}
Combining the estimation \eqref{infty0} with the estimation \eqref{reg-h1}, we derive \eqref{h1+infty}.
\end{proof}

Our next step is to show the $\dot H^2$-regularity for the solution of Eq. \eqref{see}.
To this end, we need the following result.

\begin{lm} \label{lm-fx}
Let $\beta\in [0,1/2)$ and \eqref{con-f'} hold.
For any $u\in \dot H^\beta\cap L_x^\infty$, there exists a constant $C$ such that 
\begin{align}   \label{fx1}
\|F(u)\|_\beta
&\le C \big(1+\|u\|^{q-1}_{L_x^\infty} +\|u\|^{q-1}_\beta \big).
\end{align}
\end{lm}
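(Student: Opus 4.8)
The plan is to exploit the Gagliardo (fractional Sobolev) characterization of the $\dot H^\beta$-norm, which is available precisely because $\beta<1/2$: for such $\beta$ the domain $\dot H^\beta$ of $(-A)^{\beta/2}$ coincides with the fractional Sobolev space $H^\beta(\OOO)$ with equivalent norms and no boundary correction, so that
\begin{align*}
\|F(u)\|_\beta^2
\simeq \|f(u)\|_{L_x^2}^2
+\int_\OOO\int_\OOO \frac{|f(u(x))-f(u(y))|^2}{|x-y|^{d+2\beta}}\,{\rm d}x\,{\rm d}y.
\end{align*}
For $\beta=0$ the statement is immediate from the polynomial growth \eqref{con-f1}, so I would assume $\beta\in(0,1/2)$ and estimate the two pieces on the right separately.

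The $L_x^2$-part is controlled directly by \eqref{con-f1}: since $\OOO$ is bounded and $u\in L_x^\infty$, we have $\|f(u)\|_{L_x^2}\le C(1+\|u\|_{L_x^\infty}^{q-1})$. For the Gagliardo seminorm the key pointwise step is the mean value theorem together with the growth condition \eqref{con-f'}: for all $x,y\in\OOO$,
\begin{align*}
|f(u(x))-f(u(y))|
\le \Big(\sup_\xi |f'(\xi)|\Big)\,|u(x)-u(y)|
\le L'_f\big(1+\|u\|_{L_x^\infty}^{q-2}\big)\,|u(x)-u(y)|,
\end{align*}
the supremum being over $\xi$ between $u(x)$ and $u(y)$. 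Squaring this, dividing by $|x-y|^{d+2\beta}$ and integrating pulls the constant $C(1+\|u\|_{L_x^\infty}^{q-2})$ out of the double integral and leaves exactly the Gagliardo seminorm of $u$, which is $\simeq\|u\|_\beta$. Hence the seminorm of $f(u)$ is bounded by $C(1+\|u\|_{L_x^\infty}^{q-2})\,\|u\|_\beta$.

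Collecting the two bounds gives $\|F(u)\|_\beta\le C(1+\|u\|_{L_x^\infty}^{q-1})+C(1+\|u\|_{L_x^\infty}^{q-2})\,\|u\|_\beta$, and it remains only to recast the mixed term into the stated shape. When $q=2$ the factor $\|u\|_{L_x^\infty}^{q-2}$ is constant and there is nothing to do; when $q>2$ I would apply Young's inequality with the conjugate exponents $(q-1)/(q-2)$ and $q-1$, which converts $\|u\|_{L_x^\infty}^{q-2}\|u\|_\beta$ into $C(\|u\|_{L_x^\infty}^{q-1}+\|u\|_\beta^{q-1})$ and yields \eqref{fx1}. The one genuinely delicate point is the first display: the equivalence of $\|\cdot\|_\beta$ with the $L_x^2$-norm plus the Gagliardo seminorm breaks down for $\beta\ge1/2$, since the fractional power space of the Dirichlet Laplacian then no longer agrees with $H^\beta(\OOO)$; thus the hypothesis $\beta\in[0,1/2)$ is used in an essential way, while everything else is a routine consequence of the polynomial growth of $f$ and $f'$.
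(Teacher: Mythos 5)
Your proof is correct and is essentially the paper's own argument: identify $\dot H^\beta$ with the Sobolev--Slobodeckij space $W^{\beta,2}$ for $\beta\in(0,1/2)$ (the case $\beta=0$ being trivial), control the Gagliardo seminorm of $f(u)$ by the mean value theorem together with the growth condition \eqref{con-f'} and the $L_x^\infty$-bound on $u$, and absorb the mixed term $\|u\|_{L_x^\infty}^{q-2}\|u\|_\beta$ via Young's inequality. The only cosmetic deviation is in the $L_x^2$-piece, where the paper first bounds $\|F(u)\|$ by $1+\|u\|_{L_x^{2(q-1)}}^{q-1}$ before passing to the $L_x^\infty$-norm, while you bound it by $1+\|u\|_{L_x^\infty}^{q-1}$ directly.
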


\begin{proof}
We assume that $\beta \in (0,1/2)$, while the inequality \eqref{fx1} for 
$\beta=0$ is trivial.
Recall that for $\beta \in (0,1/2)$ the space $\dot H^\beta$ coincides with the Sobolev--Slobodeckij space $W^{\beta,2}$ whose norm is defined by
\begin{align*}
\|u\|^2_{W^{\beta,2}}
&:= \|u\|^2+\int_{\OOO}\int_{\OOO} 
\frac{|u(x)-u(y)|^2}{|x-y|^{d+2 \beta}} {\rm d}x{\rm d}y,
\quad u \in W^{\beta,2}.
\end{align*}
Then by mean value theorem, the growth condition \eqref{con-f'}, the embedding \eqref{emb}, and Young inequality, we get 
\begin{align*}
\|F(u)\|^2_\beta
&\le C\Big(\|F(u)\|^2+\int_{\OOO}\int_{\OOO} 
\frac{|f(u(x))-f(u(y))|^2}{|x-y|^{d+2 \beta}} {\rm d}x{\rm d}y \Big) \\
&\le C \Big(1+\|u\|^{2(q-1)}_{L_x^{2(q-1)}} \Big) 
+C \Big(1+\|u\|^{2(q-2)}_{L_x^\infty}\Big)
\int_{\OOO}\int_{\OOO} 
\frac{|u(x)-u(y)|^2}{|x-y|^{d+2 \beta}} {\rm d}x{\rm d}y  \\
&\le C \big(1+\|u\|^{2(q-1)}_{L_x^\infty} +\|u\|^{2(q-1)}_\beta \big).
\end{align*}
This completes the proof of \eqref{fx1}.
\end{proof}

\begin{tm} \label{tm-h2}
Let $u_0\in \dot H^2$ and Assumptions \ref{ap-f}-\ref{ap-g2} hold.
Then the solution $u$ of Eq. \eqref{see} is in $L^p(\Omega; \CC([0,T]; \dot H^2))$ for any $p\ge 1$ such that 
\begin{align}\label{reg-h2}
\ee\Big[ \sup_{t\in [0,T]} \|u(t)\|^p_2 \Big]
\le C \Big(1+\|u_0\|^{p \rho(q-1)}_2+\|u_0\|^{p(q-1)^2}_2 \Big).
\end{align}
\end{tm}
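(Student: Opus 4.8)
The plan is to lift the $\dot H^{1+\gamma}$-regularity of Proposition~\ref{prop-h1+} up to the endpoint $\gamma=1$ by the same factorization/bootstrap machinery, now supplementing Assumption~\ref{ap-g} with the extra growth condition \eqref{con-g2} of Assumption~\ref{ap-g2} on the diffusion term. The three summands in the mild formulation \eqref{mild} are treated separately. For the initial term one has the standard bound $\|S(\cdot)u_0\|_{L_\omega^p L_t^\infty \dot H^2}\le C\|u_0\|_2$ exactly as in \eqref{reg-x0}. The real content is in the two convolutions, and the obstruction (recorded in Remark~\ref{rk-h2}) is that the variational energy argument of Theorem~\ref{tm-h1} fails at the $\dot H^2$ level because $f''$ is unbounded; so the whole argument must stay in the semigroup/factorization framework rather than reverting to It\^o's formula in $\dot H^2$.

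First I would handle the deterministic convolution $S*F(u)$. The key new input is Lemma~\ref{lm-fx}, which estimates $\|F(u)\|_\beta$ in terms of $\|u\|_{L_x^\infty}$ and $\|u\|_\beta$ for $\beta\in[0,1/2)$. Fixing such a $\beta$, I would write $\|(-A)S(t-r)F(u(r))\|=\|(-A)^{1-\beta/2}S(t-r)(-A)^{\beta/2}F(u(r))\|$ and use the smoothing bound \eqref{ana}, i.e. $\|(-A)^{1-\beta/2}S(t-r)\|_{\LL(H)}\le C(t-r)^{-(1-\beta/2)}$, so that the singularity $(t-r)^{-(1-\beta/2)}$ is integrable precisely because $\beta>0$. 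After a Minkowski/Young convolution estimate this bounds $\|S*F(u)\|_{L_\omega^p L_t^\infty \dot H^2}$ by $C\,T^{\beta/2}\|F(u)\|_{L_\omega^p L_t^\infty \dot H^\beta}$, and then Lemma~\ref{lm-fx} together with the $L_x^\infty$ and $\dot H^{1+\beta}$ control from Corollary~\ref{cor-infty} and Proposition~\ref{prop-h1+} (note $\dot H^\beta\hookleftarrow\dot H^{1+\beta}$ so $\|u\|_\beta\le\|u\|_{1+\beta}$) converts this into a polynomial bound in $\|u_0\|_2$; this is where the factor $(q-1)^2$ in the exponent of \eqref{reg-h2} is generated, since $F$ costs one power $q-1$ and the $\dot H^{1+\gamma}$-estimate already carried a $q-1$. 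One must check $u_0\in\dot H^2\hookrightarrow L_x^\infty$ (valid for $d\le3$), so Corollary~\ref{cor-infty} applies.

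Next I would treat the stochastic convolution via the factorization formula \eqref{wa}, $S\diamond G(u)=\tfrac{\sin\pi\alpha}{\pi}R_\alpha G_\alpha$ with $R_\alpha$ as in \eqref{ra}. Invoking Lemma~\ref{hol-cha} with $\rho=1+\theta$ and target $\theta_{\mathrm{target}}=2$ reduces matters to showing $G_\alpha\in L^p(\Omega;L^p(0,T;\dot H^{1+\theta}))$, which by \eqref{con-g2} reduces to $\|G(u)\|_{\LL_2^{1+\theta}}\le C(1+\|u\|_{1+\theta}^\rho)$ integrated against the kernel $(t-r)^{-\alpha}$. Applying the smoothing property to the inner integral defining $G_\alpha$ and using the already-established finiteness of $\ee[\sup_t\|u(t)\|_{1+\theta}^{p\rho}]$ from Proposition~\ref{prop-h1+} (legitimate since $\theta\in(0,1)$ so $1+\theta<2$) gives a finite bound; Lemma~\ref{hol-cha} requires the parameter inequality $\alpha>(2-(1+\theta))/2+1/p=(1-\theta)/2+1/p$, so I would choose $\alpha\in\big((1-\theta)/2+1/p,\,1\big)$ and $p$ large, which is feasible precisely because $\theta>0$. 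This is where the exponent $\rho(q-1)$ in \eqref{reg-h2} appears.

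The main obstacle, and the step deserving the most care, is the deterministic-convolution estimate: one must push $F(u)$ into a fractional space $\dot H^\beta$ with $\beta>0$ (so that the gain $(-A)^{1-\beta/2}S(t-r)$ leaves an integrable singularity) while keeping $\beta<1/2$ (so that Lemma~\ref{lm-fx} and the Sobolev--Slobodeckij characterization are available and $F$ maps into $\dot H^\beta$ without needing $f''$). The whole scheme works only in the window $\beta\in(0,1/2)$, and verifying that the required $L_x^\infty$-moments and $\dot H^{1+\beta}$-moments of $u$ are indeed finite — which is exactly the output of Corollary~\ref{cor-infty} and Proposition~\ref{prop-h1+} — is the linchpin. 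Finally I would combine the three bounds by the triangle inequality in $L_\omega^p L_t^\infty\dot H^2$ and absorb all initial-data norms into the right-hand side of \eqref{reg-h2}, noting that path-continuity in $\dot H^2$ follows from the H\"older continuity delivered by Lemma~\ref{hol-cha} for the convolution terms together with continuity of $S(\cdot)u_0$.
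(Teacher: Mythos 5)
Your proposal is correct, and for two of the three terms it coincides with the paper's own proof: the stochastic convolution is treated exactly as in the paper (factorization \eqref{wa}, the growth condition \eqref{con-g2}, the $\dot H^{1+\theta}$-moments from Proposition \ref{prop-h1+}, and Lemma \ref{hol-cha} with $\rho=1+\theta$ and target exponent $2$), as is the initial term via \eqref{reg-x0}. The genuine deviation is the deterministic convolution: the paper routes it through the factorization formula \eqref{fa} as well, bounding $\|F_\alpha\|_{L_{\omega,t}^p\dot H^\beta}$ by Young's inequality plus Lemma \ref{lm-fx} and then applying Lemma \ref{hol-cha} with $\rho=\beta=5/p$, $\alpha=1-1/p$; you instead use the direct smoothing bound $\|(-A)^{1-\beta/2}S(t-r)\|_{\LL(H)}\le C(t-r)^{-(1-\beta/2)}$, which is equally valid for the $L_\omega^pL_t^\infty\dot H^2$-estimate, more elementary, and produces the same exponent $(q-1)^2$ (your use of Corollary \ref{cor-infty} for the $L_x^\infty$-moments is equivalent to the paper's direct choice of $\gamma\in(1/2,1)$ in Proposition \ref{prop-h1+} with $\dot H^{1+\gamma}\hookrightarrow L_x^\infty$). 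What the paper's route buys is that Lemma \ref{hol-cha} delivers H\"older continuity of $S*F(u)$ in $\dot H^2$ for free, whereas your direct bound yields only boundedness; your closing appeal to Lemma \ref{hol-cha} for path-continuity of the convolution terms therefore tacitly requires the factorization representation of $S*F(u)$ after all --- harmless, since the bound on $\|F(u)\|_{L_\omega^pL_t^\infty\dot H^\beta}$ you have already established is exactly the input it needs. One further bookkeeping point: for the stochastic convolution your admissible window for $\alpha$ must be capped at $1/2$, not $1$, because the Burkholder--Davis--Gundy estimate for $G_\alpha$ requires $\int_0^t(t-r)^{-2\alpha}\,{\rm d}r<\infty$; the window $((1-\theta)/2+1/p,\,1/2)$ is still nonempty for $p>4/\theta$, which is precisely why the paper takes $\alpha=1/2-1/p$ there.
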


\begin{proof}
By Young inequality, the factorization formula \eqref{fa}, and Lemma \ref{lm-fx}, we get for $p>1$, $1/p<\alpha<1$, and $\beta\in (0,1/2)$ that 
\begin{align*}
\|F_\alpha\|_{L_{\omega,t}^p \dot H^\beta}
&\le \big\|t^{-\alpha} * \|F(u)\|_{\dot H^\beta} \big\|_{L_{\omega,t}^p}
\le C T^{1-\alpha}  \|F(u)\|_{L_\omega^p L_t^\infty \dot H^\beta} \\
&\le C T^{1-\alpha} 
 \big(1+\|u\|_{L_\omega^{p(q-1)} L_t^\infty L_x^\infty}^{q-1}
+\|u\|^{q-1}_{L_\omega^{p(q-1)} L_t^\infty \dot H^\beta} \big).
\end{align*}
Since $u_0\in\dot H^2$, Proposition \ref{prop-h1+} yields that
$u \in L^p(\Omega; \CC([0,T]; \dot H^{1+\gamma})$ for any $p\ge 2$ and $\gamma\in [0,1)$ such that \eqref{reg-h1+} holds.
In particular, we take $\gamma\in (1/2,1)$ such that $\dot H^{1+\gamma}\hookrightarrow L_x^\infty$.
Consequently,
\begin{align*}\big
\|F_\alpha\|_{L_{\omega,t}^p \dot H^\beta}
\le C \big(1+\|u\|_{L_\omega^{p(q-1)} L_t^\infty \dot H^{1+\gamma}}^{q-1} \big)
&\le C \big(1+\|u_0\|_2^{(q-1)^2} \big).
\end{align*}
As a result of Lemma \ref{hol-cha} with $\rho=\beta=5/p$ (take $p>10$) and 
$\alpha=1-1/p$, we obtain $S*F(u)\in L^p(\Omega; \CC([0,T]; \dot H^2))$ such that
\begin{align} \label{h2-f}
\|S*F(u)\|_{L_\omega^p L_t^\infty \dot H^2} 
\le C \|F_\alpha\|_{L_{\omega,t}^p \dot H^\beta}
&\le C \big(1+\|u\|^{(q-1)^2}_2 \big).
\end{align}

For the stochastic convolution $S\diamond G(u)$, the condition \eqref{con-g2} and the estimation \eqref{reg-h1+} yield that 
\begin{align*}
\|G_\alpha\|_{L_{\omega,t}^p \dot H^{1+\theta}}
\le C \big(1+\|u\|^\rho_{L_\omega^{p \rho} L_t^\infty \dot H^{1+\theta}} \big) 
\le C \big(1+\|u_0\|^{\rho(q-1)}_{1+\theta} \big)
\le C \big(1+\|u_0\|^{\rho(q-1)}_2 \big),
\end{align*}
with $\alpha\in (1/p,1/2)$.
Taking for example $p>4/\theta>4$ and $\alpha=1/2-1/p$ such that 
$\alpha\in (1/p,1/2)$ and $2\alpha-2/p+1+\theta>2$, we obtain by
Lemma \ref{hol-cha} that
$S\diamond G(u)\in L^p(\Omega; \CC([0,T]; \dot H^2))$ such that
\begin{align*}
\|S\diamond G(u)\|_{L_\omega^p L_t^\infty \dot H^2} 
\le C \|G_\alpha\|_{L_{\omega,t}^p \dot H^{1+\theta}}
\le C \big(1+\|u_0\|^{\rho (q-1)}_2 \big).
\end{align*}
The above inequality, in combination with \eqref{h2-f} and \eqref{reg-x0} with $\gamma=1$, shows \eqref{reg-h2}.
\end{proof}

\begin{rk} 
One could not expect that $u\in \CC([0,T]; L^p(\Omega; \dot H^{2+\epsilon}))$ provided $u_0\in \dot H^{2+\epsilon}$ for some 
$\epsilon\in \rr^*_+$ under Assumption \ref{ap-g}.
This is due to the restriction of the regularity for the stochastic convolution.
For the additive noise case, the equivalent assumption $\|(-A)^{1/2} {\bf Q}^{1/2} \|_{HS(H,H)}<\infty$ in Remark \ref{rk-wan} is not sufficient to ensure that 
\begin{align*} 
\ee\Big[\Big\|\int_0^t S(t-r) {\rm d}W(r)\Big\|_{2+\epsilon}^2\Big]
<\infty, \quad t\in (0,T].
\end{align*}
In fact, one can find a counterexample that  
${\bf Q}=(-A)^{-\alpha}$ for some $\alpha\in \rr$.
Then 
\begin{align*} 
\ee\Big[\Big\|\int_0^t S(t-r) {\rm d}W(r)\Big\|_{2+\epsilon}^2\Big]
=\int_0^t \sum_{k\in \nn_+}\lambda_k^{2+\epsilon-\alpha} e^{-2\lambda_k r}  {\rm d}r
=\sum_{k\in \nn_+}
\frac{\lambda_k^{1+\epsilon-\alpha} (1-e^{-2\lambda_k t})}2,
\end{align*}
which is convergent for $t\in (0,T]$ if and only if 
$\sum_{k\in \nn_+}\lambda_k^{1+\epsilon-\alpha}<\infty$, i.e., $\alpha>1+\epsilon+\frac d2$.
However, the assumption 
$\|(-A)^{1/2} {\bf Q}^{1/2}\|_{HS(H,H)}<\infty$
is equivalent to the convergence of the previous series with $\epsilon=0$.
\end{rk}

Finally, we give another sharp H\"older regularity under $\dot H^\beta$-norm with $\beta\in [0,1]$ for the solution of Eq. \eqref{see} with $\dot H^{1+\gamma}$-valued initial datum for some $\gamma\in [0,1]$.
It is also frequently used in the derivation of the strong convergence rates of numerical approximations in Sections \ref{sec4}-\ref{sec5}.

\begin{cor} \label{cor-hol}
Let $\gamma\in [0,1]$, $u_0\in \dot H^{1+\gamma}$, and Assumptions \ref{ap-f}-\ref{ap-g} hold. 
Assume that Assumption \ref{ap-g2} holds if $\gamma=1$.
Then for any $p\ge 1$ and $\beta\in [0,1]$, there exists a constant $C$ such that 
\begin{align} \label{hol-u1}
\sup_{0\le s < t \le T} \frac{\|u(t)-u(s)\|_{L_\omega^p \dot H^\beta}} 
{|t-s|^{\frac{1+\gamma-\beta}2\wedge \frac12}} 
\le 
\begin{cases}
C (1+\|u_0\|^{q-1}_{1+\gamma});, \quad \gamma\in [0,1); \\
C (1+\|u_0\|^{\rho(q-1)}_2+\|u_0\|^{(q-1)^2}_2), \quad \gamma=1.
\end{cases}
\end{align}
\end{cor}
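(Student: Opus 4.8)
The plan is to prove the H\"older bound \eqref{hol-u1} by splitting the increment $u(t)-u(s)$ via the mild formulation \eqref{mild} into three pieces and estimating each in the $L_\omega^p \dot H^\beta$-norm, exactly as in the proof of Theorem \ref{tm-hol} but now tracking the gain coming from the higher spatial regularity of $u$ established in Proposition \ref{prop-h1+} and Theorem \ref{tm-h2}. Writing $u(t)-u(s)=(S(t-s)-{\rm Id})u(s)+\int_s^t S(t-r)F(u(r)){\rm d}r+\int_s^t S(t-r)G(u(r)){\rm d}W(r)=:I+II+III$, the target exponent $\frac{1+\gamma-\beta}2\wedge\frac12$ suggests that the regularizing term $I$ is the one that dictates the H\"older rate when $\beta<1+\gamma$, while the convolution terms should always be controllable at rate at least $\frac12$.

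First I would estimate $I$ using the smoothing property \eqref{ana}: since $u(s)\in\dot H^{1+\gamma}$ uniformly in $s$ by \eqref{reg-h1+} (or \eqref{reg-h2} when $\gamma=1$), I would write $\|(S(t-s)-{\rm Id})u(s)\|_{\dot H^\beta}\le \|(-A)^{-\rho}(S(t-s)-{\rm Id})\|_{\LL(H)}\|u(s)\|_{1+\gamma}$ with the choice $\rho=\frac{1+\gamma-\beta}2$ when this is in $[0,1]$, giving the factor $(t-s)^{(1+\gamma-\beta)/2}$; when $1+\gamma-\beta>2$ I would instead cap $\rho$ at $1$ to respect the range of \eqref{ana}, which is why the exponent is truncated by $\frac12$ (observing $\rho\le1$ forces the rate $\le\frac12$, and indeed $\frac{1+\gamma-\beta}2\le\frac12$ precisely when $\beta\ge\gamma$, so the $\wedge\frac12$ handles the complementary regime). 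Taking $L_\omega^p$-norms and invoking \eqref{reg-h1+}/\eqref{reg-h2} then produces the stated right-hand side.

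Next I would treat $II$ and $III$. For the drift convolution $II$, Minkowski's inequality together with $\|(-A)^{\beta/2}S(t-r)\|_{\LL(H)}\le C(t-r)^{-\beta/2}$ from \eqref{ana} gives $II\le C\int_s^t (t-r)^{-\beta/2}\|F(u(r))\|_{L_\omega^p L_x^2}{\rm d}r\le C\,(t-s)^{1-\beta/2}\|F(u)\|_{L_t^\infty L_\omega^p L_x^2}$, and since $1-\beta/2\ge\frac12$ for $\beta\le1$ this term is better than the target rate; the $L_x^2$-norm of $F(u)$ is controlled by \eqref{con-f'}, the embedding \eqref{emb}, and \eqref{reg-h1+} exactly as in \eqref{II}. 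For the stochastic convolution $III$, the Burkholder--Davis--Gundy inequality \eqref{bdg2}, the bound $\|(-A)^{\beta/2}S(t-r)\|_{\LL(H)}\le C(t-r)^{-\beta/2}$, and the growth condition \eqref{con-g}--\eqref{con-g1} give $III\le C\bigl(\int_s^t (t-r)^{-\beta}\|G(u(r))\|_{L_\omega^p\LL_2^0}^2{\rm d}r\bigr)^{1/2}\le C(t-s)^{(1-\beta)/2}\|G(u)\|_{L_t^\infty L_\omega^p\LL_2^0}$; for $\beta\le1$ one has $(1-\beta)/2\ge0$, but to reach the claimed rate one uses that $G(u(r))\in\LL_2^1$ with the bound \eqref{con-g1} and the smoothing $\|(-A)^{(\beta-1)/2}S(t-r)\|_{\LL(H)}\le C(t-r)^{-(\beta-1)/2}$ to write $III\le C(\int_s^t(t-r)^{-(\beta-1)}{\rm d}r)^{1/2}\|G(u)\|_{L_t^\infty L_\omega^p\LL_2^1}$, yielding rate $(2-\beta)/2\ge\frac12$ together with \eqref{reg-h1+}.

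The main obstacle, and the step that requires the most care, is the bookkeeping of the exponent $\frac{1+\gamma-\beta}2\wedge\frac12$ across the full range $\gamma\in[0,1]$, $\beta\in[0,1]$: one must verify that the rate extracted from $I$ never exceeds the admissible smoothing range in \eqref{ana} (hence the truncation at $\frac12$), and that the convolution terms $II$ and $III$ always deliver a rate of at least $\frac12$ so that the minimum is genuinely achieved by $I$ in the regime $\beta<\gamma$ and by all three terms uniformly at $\frac12$ in the regime $\beta\ge\gamma$. A secondary subtlety is that the endpoint $\gamma=1$ relies on the $\dot H^2$-regularity from Theorem \ref{tm-h2} rather than Proposition \ref{prop-h1+}, which is exactly why Assumption \ref{ap-g2} is imposed in that case and why the right-hand side of \eqref{hol-u1} carries the two different polynomial dependences on $\|u_0\|_2$; I would simply substitute the appropriate regularity estimate into the bound for $\|u(s)\|_{1+\gamma}$ in the estimate for $I$ and into the $L_t^\infty$-norms of $F(u)$ and $G(u)$.
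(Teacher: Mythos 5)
Your decomposition and your treatment of the first two terms coincide with the paper's proof of Corollary \ref{cor-hol}: the paper also splits $u(t)-u(s)$ via the mild formulation into $I_1+II_1+III_1$, bounds $I_1$ by the smoothing property \eqref{ana} with exponent $\rho=\frac{1+\gamma-\beta}{2}$ together with \eqref{reg-h1+} (resp.\ \eqref{reg-h2} and Assumption \ref{ap-g2} when $\gamma=1$), and bounds $II_1$ at rate $1-\beta/2\ge\frac12$ exactly as you do. However, your explanation of the $\wedge\frac12$ truncation is wrong. For $\gamma\in[0,1]$ and $\beta\in[0,1]$ one always has $\frac{1+\gamma-\beta}{2}\in[0,1]$, so the case $1+\gamma-\beta>2$ in which you propose to cap $\rho$ at $1$ never occurs, and even if it did, $\rho\le1$ would cap the rate of $I$ at $1$, not at $\frac12$. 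The truncation at $\frac12$ is instead dictated by the stochastic convolution $III$, whose temporal H\"older regularity is capped at $\frac12$ regardless of the spatial smoothness of $G(u)$ (this is the Brownian scaling). Consequently you also have the two regimes swapped: when $\beta<\gamma$, term $I$ delivers the better rate $\frac{1+\gamma-\beta}{2}>\frac12$ and it is $III$ that is binding at $\frac12$; when $\beta\ge\gamma$, it is $I$ that achieves the minimum.

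The concretely failing step is your final bound for $III$. You invoke $\|(-A)^{(\beta-1)/2}S(t-r)\|_{\LL(H)}\le C(t-r)^{-(\beta-1)/2}$, but for $\beta<1$ the exponent $-(\beta-1)/2=(1-\beta)/2$ is positive, so this asserts $\|(-A)^{-\sigma}S(t)\|_{\LL(H)}\le Ct^{\sigma}$ with $\sigma>0$, which is false: as $t\downarrow0$ the left-hand side tends to $\lambda_1^{-\sigma}>0$ while the right-hand side vanishes (the estimate \eqref{ana} is only valid for $\mu\ge0$). The rate $(2-\beta)/2$ you extract from it is also impossible on its face: for $\beta=0$ and nondegenerate additive noise, the increment $\int_s^t S(t-r)G\,{\rm d}W(r)$ has $L_\omega^2L_x^2$-norm of order $(t-s)^{1/2}$, not $(t-s)$. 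The repair is exactly what the paper does in \eqref{III1}: for $\beta\le1$ the operator $(-A)^{(\beta-1)/2}S(\cdot)$ is \emph{uniformly bounded} (since $(-A)^{(\beta-1)/2}$ itself is bounded), so the Burkholder--Davis--Gundy inequality \eqref{bdg2} together with \eqref{con-g1} yields $III\le C\|G(u)\|_{L_t^\infty L_\omega^p\LL_2^1}(t-s)^{1/2}$, i.e.\ rate exactly $\frac12$, which is all the statement requires. With that correction, and the regime bookkeeping set right as above, your argument becomes the paper's proof.
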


\begin{proof}
Without loss of generality, let $0\le s<t\le T$.
We use similar idea from Theorem \ref{tm-hol} to show \eqref{hol-u1}: 
\begin{align*}
\|u(t)-u(s)\|_{L_\omega^p \dot H^\beta} \le I_1+II_1+III_1,
\end{align*}
where
\begin{align*}
I_1 &=\|(S(t)-{\rm Id})u(s)\|_{L_\omega^p \dot H^\beta}, \\
II_1  &=\Big\|\int_s^t S(t-r) F(u(r)) {\rm d}r \Big\|_{L_\omega^p \dot H^\beta}, \\
III_1 &=\Big\|\int_s^t S(t-r) G(u(r)) {\rm d}W(r) \Big\|_{L_\omega^p \dot H^\beta}.
\end{align*}
The smoothing property \eqref{ana} and the estimations \eqref{reg-h1+} and \eqref{reg-h2} imply that 
\begin{align} \label{I1}
I_1 
& \le \|S(t-s)-{\rm Id}\|_{\LL(H; \dot H^{-1-\gamma+\beta})} 
\|u(s)\|_{L_\omega^p \dot H^{1+\gamma}}
\le C(t-s)^\frac{1+\gamma-\beta}2 \|u\|_{L_t^\infty L_\omega^p \dot H^{1+\gamma}} \nonumber \\
& \le 
\begin{cases}
C (t-s)^\frac{1+\gamma-\beta}2 (1+\|u_0\|^{q-1}_{1+\gamma});, \quad \gamma\in [0,1); \\
C (t-s)^\frac{1+\gamma-\beta}2 (1+\|u_0\|^{\rho(q-1)}_2+\|u_0\|^{(q-1)^2}_2), \quad \gamma=1.
\end{cases}
\end{align}

For the second term $II_1$, by the ultracontractive property \eqref{ana}, the embedding \eqref{emb}, and the estimation \eqref{reg-h1} we have
\begin{align} \label{II1}
II_1 &\le \|F(u)\|_{L_t^\infty L_\omega^p L_x^2}
\Big[\int_s^t (t-r)^{-\frac\beta2} {\rm d}r \Big]
\le C \big(1+\|u_0\|^{q-1}_1 \big) (t-s)^{1-\frac\beta2}.
\end{align}
For the last term $III_1$, by the Burkholder--Davis--Gundy inequality \eqref{bdg2}, the conditions \eqref{con-g}, \eqref{con-g1}, and \eqref{con-g2}, the smoothing property \eqref{ana}, and the estimations \eqref{reg-h1} and \eqref{reg-h1+}, we get 
\begin{align} \label{III1}
III_1 
\le \|G(u)\|_{L_t^\infty L_\omega^p \LL_2^1}
(t-s)^\frac12
\le C \big(1+\|u_0\|_1 \big) (t-s)^\frac12.
\end{align}
Combining the above estimations \eqref{I1}-\eqref{III1}, we conclude \eqref{hol-u1}.
\end{proof}

\begin{rk} \label{rk-q}
The technical requirement that $q\le 4$ when $d=3$ in \eqref{q} comes from the embedding 
$H^1\hookrightarrow L_x^{2(q-1)}$ to control $\|F(u)\|$ by $\|u\|_1$, which was used in Lemma \ref{lm-sol}, \eqref{II} in Theorem \ref{tm-hol}, \eqref{est-sf} in Proposition \ref{prop-h1+}, and \eqref{II1} in Corollary \ref{cor-hol}. 
\end{rk}

\section{Euler--Galerkin Scheme}
\label{sec4}

Our main aim in this section is to give a Galerkin-based fully discrete scheme and derive its optimal strong convergence rate.
The proof of Theorem \ref{main1} is given at the end of Section \ref{sec4.2}.

\subsection{Drift-implicit Euler--Galerkin Finite Element Scheme}
\label{sec4.1}

Let $h\in (0,1)$, $\TT_h$ be a regular family of partitions of $\OOO$ with maximal length $h$, and $V_h\subset V$ be the space of continuous functions on $\bar \OOO$ which are piecewise linear over $\TT_h$ and vanish on the boundary $\partial \OOO$.
Let $A_h: V_h\rightarrow V_h$ and $\PP_h: V^* \rightarrow V_h$ be the discrete Laplacian and generalized orthogonal projection operator, respectively, defined by 
\begin{align}  
\<A_h u_h, v_h\> & =-\<\nabla u_h, \nabla v_h\>,
\quad u_h, v_h\in V_h, \label{Ah} \\
\<\PP_h u, v_h\> & ={_{-1}}\<u, v_h\>_1,
\quad u\in V^*,\ v_h\in V_h. \label{ph}
\end{align}
Taking $u\in H$ and $v_h=\PP_h u$ in \eqref{ph} and using Cauchy--Schwarz inequality implies the following $H$-contraction property of 
$\PP_h$:
\begin{align}  \label{ph-h}
\|\PP_h u\|\le \|u\|, \quad u\in H.
\end{align}

The finite element approximation for Eq. \eqref{see} is to find an 
$\ff$-adapted $V_h$-valued process $\{u_h(t):\ t\in [0,T]\}$ such that the following variational equality holds for any $t\in [0,T]$ and $v_h\in V_h$:
\begin{align} \label{see-h}
& \<u_h(t), v_h\>+\int_0^t \<\nabla u_h(r), \nabla v_h\> {\rm d}r  \\
&=\<u_h(0), v_h\>+\int_0^t \<F(u_h(r)), v_h\> {\rm d}r
+\int_0^t \<v_h, G(u_h(r)) {\rm d}W(r)\>. \nonumber 
\end{align}
To complement \eqref{see-h}, we set the initial datum to be 
$u_h(0)=\PP_h u_0$.
Then the finite element approximation \eqref{see-h} is equivalent to
\begin{align} \label{fem}
\begin{split}
& {\rm d}u_h(t) =(A_h u_h(t)+\PP_h F(u_h(t))){\rm d}t 
+\PP_h G(u_h(t)) {\rm d}W(t),
\quad t\in [0,T];  \\
& u_h(0)=\PP_h u_0.
\end{split}
\end{align}

Let $M\in \nn_+$ and $\{(t_m,t_{m+1}]:\ m\in \zz_{M-1}\}$ be an equal length subdivision of $(0,T]$ with temporal step-size $\tau=t_{m+1}-t_m$ for each $m\in \zz_{M-1}$.
The drift-implicit Euler (DIE) scheme of the finite element approximation \eqref{fem}, that we call the DIE Galerkin (DIEG) scheme, is to find a $V_h$-valued discrete process $\{u_h^m:\ m\in \zz_M\}$ such that
\begin{align}\label{full} \tag{DIEG}
&u_h^{m+1}
=u_h^m+\tau A_h u_h^{m+1}
+\tau \PP_h F(u_h^{m+1})
+\PP_h G(u_h^m) \delta_m W,
\quad u_h^0=\PP_h u_0,
\end{align}
where $\delta_m W=W(t_{m+1})-W(t_m)$, $m\in \zz_{M-1}$.
This DIEG scheme had been widely studied; see, e.g., \cite{FLZ17(SINUM)}, \cite{Pro14}, and \cite{QW19(JSC)}.

It is clear that the DIEG scheme \eqref{full} is equivalent to the scheme
\begin{align}\label{full+}
u_h^{m+1}=S_{h,\tau} u_h^m+\tau S_{h,\tau} \PP_h F(u_h^{m+1})
+S_{h,\tau} \PP_h G(u_h^m) \delta_m W,
\quad m\in \zz_{M-1},
\end{align}
with initial datum $u_h^0=\PP_h u_0$, where $S_{h,\tau}:=({\rm Id}-\tau A_h)^{-1}$ is a space-time approximation of the continuous semigroup $S$ in one step.
Iterating \eqref{full+} for $m$-times, we obtain 
\begin{align}\label{full-sum}
u_h^{m+1}
=S_{h,\tau}^{m+1} u_h^0+\tau \sum_{i=0}^m S_{h,\tau}^{m+1-i} \PP_h F(u_h^{i+1})
+\sum_{i=0}^m S_{h,\tau}^{m+1-i} \PP_h G(u_h^i) \delta_i W,
\quad m\in \zz_{M-1}.
\end{align}

Throughout we take $\tau\in (0,1)$ when $L_f\le 0$ and $\tau<1/(4 L_f)$ when $L_f\in \rr_+^*$.
Under Assumptions \ref{ap-f}-\ref{ap-g}, it is not difficult to show that the DIEG scheme \eqref{full} is solvable and stable.

\subsection{Strong Convergence Rate of DIEG Scheme}
\label{sec4.2}

In this part, we give the optimal strong convergence rate of the DIEG scheme \eqref{full}.

Denote by $E_{h,\tau}(t)=S(t)-S_{h,\tau}^{m+1} \PP_h$ for $t\in (t_m, t_{m+1}]$ with $m\in \zz_{M-1}$.
The following estimations of $E_{h,\tau}$ play a pivotal role in the error estimation of the DIEG scheme \eqref{full}; see, e.g., \cite[Lemmas 4.3(i) and 4.4(ii)]{Kru14(IMA)}.

\begin{lm} \label{lm-eht}
Let $t\in (0,T]$.
\begin{enumerate}
\item
For $0\le \nu\le \mu\le 2$ and $x\in \dot H^\nu$,
\begin{align} \label{eht1}
\|E_{h,\tau} (t) x\|\le C (h^\mu+\tau^\frac\mu2) t^{-\frac{\mu-\nu}2} \|x\|_\nu.
\end{align}

\item
For $0\le \mu\le 1$ and $x\in \dot H^\mu$,
\begin{align} \label{eht2}
\Big( \int_0^t \|E_{h,\tau}(r) x\|^2 {\rm d}r \Big)^\frac12 
\le C (h^{1+\mu}+\tau^\frac{1+\mu}2) \|x\|_\mu.
\end{align}
\end{enumerate}
\end{lm}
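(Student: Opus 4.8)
The plan is to split $E_{h,\tau}(t)$ into a purely spatial (semidiscrete Galerkin) error and a purely temporal (backward-Euler) error and to bound each against classical building blocks. Writing $S_h(t):=e^{t A_h}\PP_h$ for the semidiscrete solution operator generated by the discrete Laplacian $A_h$ from \eqref{Ah}, I would use, for $t\in(t_m,t_{m+1}]$, the decomposition
\[
E_{h,\tau}(t)=\big(S(t)-S_h(t)\big)+\big(S_h(t)-S_{h,\tau}^{m+1}\PP_h\big),
\]
so that \eqref{eht1} and \eqref{eht2} reduce, via the triangle inequality, to the analogous pointwise and time-integrated estimates for the two pieces separately. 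A preliminary step is to record the discrete smoothing and stability bounds $\|S_{h,\tau}^{n}\|_{\LL(H)}\le1$ and $\|A_h^\alpha S_{h,\tau}^{n}\|_{\LL(H)}\le C(n\tau)^{-\alpha}$, which follow from the spectral representation of $S_{h,\tau}$ and the A-stability of backward Euler, together with the appropriate stability/approximation properties of $\PP_h$ and of the Ritz projection in the fractional spaces $\dot H^\theta$; these play the discrete role of \eqref{ana}.

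For the spatial factor $S(t)-S_h(t)$ I would invoke the classical deterministic parabolic finite element theory. Introducing the Ritz projection $R_h\colon V\to V_h$ and its nonsmooth-data error bound $\|(R_h-{\rm Id})x\|\le C h^\mu\|x\|_\mu$ for $0\le\mu\le2$, one obtains from an energy/duality argument on the error equation (driven by the Ritz error and using the analyticity of both $S$ and $S_h$) the smoothing estimate $\|(S(t)-S_h(t))x\|\le C h^\mu t^{-(\mu-\nu)/2}\|x\|_\nu$ for $0\le\nu\le\mu\le2$, and the time-integrated bound $\big(\int_0^t\|(S(r)-S_h(r))x\|^2\,{\rm d}r\big)^{1/2}\le C h^{1+\mu}\|x\|_\mu$ for $0\le\mu\le1$. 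These are standard (e.g.\ Thom\'ee's monograph); I would cite them and reuse them verbatim. The extra full power of $h$ in the integrated version reflects the maximal-regularity gain of trading one time integration for a half spatial derivative.

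For the temporal factor I would diagonalize. Let $\{(\lambda_{h,j},e_{h,j})\}_j$ be the eigenpairs of $-A_h$ on $V_h$; then $S_h(t)$ and $S_{h,\tau}^{m+1}$ act on $\PP_h x$ through the scalar multipliers $e^{-\lambda_{h,j}t}$ and $r(\tau\lambda_{h,j})^{m+1}$, where $r(z)=(1+z)^{-1}$ is the backward-Euler rational approximation of $e^{-z}$. By Parseval, \eqref{eht1} for this piece reduces to the scalar multiplier bound
\[
\big|e^{-\lambda t}-r(\tau\lambda)^{m+1}\big|\le C\,\tau^{\mu/2}\,t^{-(\mu-\nu)/2}\,\lambda^{\nu/2},\qquad t\in(m\tau,(m+1)\tau],
\]
for all $\lambda>0$ and $0\le\nu\le\mu\le2$, after which summing $\sum_j\lambda_{h,j}^{\nu}|(\PP_h x)_j|^2\le C\|x\|_\nu^2$ finishes it. This scalar bound rests on A-stability $0<r(z)\le1$ and the consistency estimate $|r(z)-e^{-z}|\le Cz^2$ for $0\le z\le1$ (both elementary for $z\ge0$), and is proved by the usual dichotomy $\lambda t\lesssim1$ versus $\lambda t\gtrsim1$, absorbing the factor $\lambda^{(\mu-\nu)/2}$ against $t^{-(\mu-\nu)/2}$.

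The hardest part, I expect, will be the two time-integrated estimates in \eqref{eht2} with their sharp extra half/full power: one cannot simply integrate the pointwise bounds, since at the borderline exponents (e.g.\ $\mu'=1+\mu$, $\nu'=\mu$) the time integral $\int_0^t r^{-(\mu'-\nu')}\,{\rm d}r$ is only logarithmically divergent. The gain of $\tau^{(1+\mu)/2}$ for the temporal piece must therefore be extracted directly from $\int_0^t\big|e^{-\lambda r}-r(\tau\lambda)^{\lceil r/\tau\rceil}\big|^2\,{\rm d}r\le C\tau^{1+\mu}\lambda^{\mu}$, summed over the subintervals $(t_k,t_{k+1}]$ and exploiting the discrete smoothing/square-summability in $r$ rather than a pointwise-in-time bound. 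Assembling the spatial and temporal integrated estimates by the triangle inequality then yields \eqref{eht2}, and combining the pointwise pieces yields \eqref{eht1}.
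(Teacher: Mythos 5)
The first thing to note is that the paper does not prove Lemma \ref{lm-eht} at all: it is quoted verbatim from \cite[Lemmas 4.3(i) and 4.4(ii)]{Kru14(IMA)}, so there is no in-paper argument to compare against. Your sketch is, in substance, a correct reconstruction of the classical machinery behind that citation: the splitting $E_{h,\tau}(t)=(S(t)-S_h(t))+(S_h(t)-S_{h,\tau}^{m+1}\PP_h)$, Thom\'ee-type nonsmooth-data estimates for the semidiscrete error, the rational-approximation multiplier bound $|e^{-\lambda t}-r(\tau\lambda)^{m+1}|\le C\tau^{\mu/2}t^{-(\mu-\nu)/2}\lambda^{\nu/2}$ with $r(z)=(1+z)^{-1}$, and interpolation between the smooth and nonsmooth endpoints is exactly how such estimates are established in the deterministic finite element literature on which the cited lemmas rest. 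Two points need more care than your sketch gives them. First, the summation step $\sum_j\lambda_{h,j}^{\nu}|(\PP_h x)_j|^2\le C\|x\|_\nu^2$ is not free: for $\nu\in(0,1]$ it amounts to $H^1$-stability of the $L^2$-projection, which requires quasi-uniformity or comparable mesh hypotheses beyond the bare contraction \eqref{ph-h}, and at $\nu=2$ it fails for $\PP_h$ in general; the standard repair is to route the $\nu=2$ case through the Ritz projection $R_h$ via the identity $A_h R_h=\PP_h A$, or to replace $\PP_h x$ by $R_h x$ and absorb $\|(\PP_h-R_h)x\|\le Ch^2\|x\|_2$ with an inverse estimate. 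Second, you correctly diagnose that \eqref{eht2} cannot be obtained by integrating the pointwise bound \eqref{eht1} (the borderline integral diverges logarithmically) and that the temporal gain must come from the square-integrated scalar estimate $\int_0^t|e^{-\lambda r}-r(\tau\lambda)^{\lceil r/\tau\rceil}|^2\,{\rm d}r\le C\tau^{1+\mu}\lambda^{\mu}$; that estimate is proved by the dichotomy $\lambda\tau\le 1$ versus $\lambda\tau\ge 1$ with geometric summation of $r(\tau\lambda)^{2n}$, and for $\mu=0$ in the regime $\lambda\tau\le 1$ one must work with the difference $|e^{-\lambda n\tau}-r(\tau\lambda)^n|$ itself rather than bounding the two contributions separately, since $\lambda^{-1}$ alone is not $O(\tau)$ there. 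With these two repairs your outline is a complete and correct proof, following the same route as the reference the paper cites.
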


To estimate the solution $u$ of Eq. \eqref{see} and $u_h^m$, we introduce the auxiliary process 
\begin{align}\label{aux1}
\widetilde{u}_h^{m+1}
=S_{h,\tau}^{m+1} u_h^0
+\tau \sum_{i=0}^m S_{h,\tau}^{m+1-i} \PP_h F(u(t_{i+1}))
+\sum_{i=0}^m S_{h,\tau}^{m+1-i} \PP_h G(u(t_i)) \delta_i W,
\end{align}
for $m\in \zz_{M-1}$, where the terms $u_h^{i+1}$ and $u_h^i$ in the discrete deterministic and stochastic convolutions of \eqref{full-sum} are replaced by $u(t_{i+1})$ and $u(t_i)$, respectively.
We start with the following uniform boundedness of $\widetilde{u}_h^m$ in $L_\omega^p L_t^\infty \dot H_x^1$:
\begin{align} \label{aux1-h1}
\sup_{m\in \zz_M} \ee\Big[\|\widetilde{u}_h^m\|^p_1 \Big]
\le C \Big(1+\|u_0\|_1^{p(q-1)} \Big),
\end{align}
provided $u_0\in \dot H_x^1$ and Assumptions \ref{ap-f}-\ref{ap-g} hold, which can be shown in view of the boundedness \eqref{reg-h1} and the well-known uniform boundedness 
\begin{align} \label{shm}
& \sup_{m\in \zz_M} \|S_{h,\tau}^m \PP_h x\| 
\le C \|x\|, \quad x\in H.
\end{align}

Next, we show the strong error estimation between the exact solution $u$ of Eq. \eqref{see} and the auxiliary process $\{\widetilde{u}_h^m\}_{m\in \zz_M}$ defined by \eqref{aux1}.

\begin{lm} \label{lm-u-uhm}
Let $\gamma\in [0,1)$, $u_0\in \dot H^{1+\gamma}$, and Assumptions \ref{ap-f}-\ref{ap-g} hold.
Then for any $p\ge 1$, there exist a constant $C$ such that 
\begin{align} \label{u-uhathm}
\sup_{m\in \zz_M} \|u(t_m)-\widetilde{u}_h^m\|_{L_\omega^p L_x^2} 
\le C (h^{1+\gamma}+\tau^\frac12 ).
\end{align}
Assume furthermore that $u_0\in \dot H^2$ and Assumption \ref{ap-g2} holds, then \eqref{u-uhathm} is valid with $\gamma=1$.
\end{lm}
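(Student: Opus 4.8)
The plan is to compare $u(t_{m+1})$, written in the mild form \eqref{mild}, with the explicit representation \eqref{aux1} of $\widetilde u_h^{m+1}$, and to estimate their difference in $L_\omega^p L_x^2$ through the three contributions coming from the initial datum, the deterministic convolution, and the stochastic convolution. Recalling $E_{h,\tau}(t)=S(t)-S_{h,\tau}^{m+1}\PP_h$ for $t\in(t_m,t_{m+1}]$, the initial-datum error is exactly $E_{h,\tau}(t_{m+1})u_0$; applying \eqref{eht1} with $\mu=\nu=1+\gamma$ (respectively $\mu=\nu=2$ in the case $\gamma=1$, $u_0\in\dot H^2$) bounds it by $C(h^{1+\gamma}+\tau^{(1+\gamma)/2})\|u_0\|_{1+\gamma}\le C(h^{1+\gamma}+\tau^{1/2})$, which already exhibits the target rate.

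For the deterministic convolution I would insert $S_{h,\tau}^{m+1-i}\PP_h F(u(r))$ on each subinterval and split the error into an operator part $\int_0^{t_{m+1}}E_{h,\tau}(t_{m+1}-r)F(u(r))\,{\rm d}r$ and a time-freezing part $\sum_{i}\int_{t_i}^{t_{i+1}}S_{h,\tau}^{m+1-i}\PP_h\bigl[F(u(r))-F(u(t_{i+1}))\bigr]\,{\rm d}r$. For $\gamma\in[0,1)$ the operator part is controlled by Minkowski's inequality and \eqref{eht1} with $\nu=0$, $\mu=1+\gamma$, whose singularity $(t_{m+1}-r)^{-(1+\gamma)/2}$ is integrable, together with the uniform bound on $\|F(u(r))\|_{L_\omega^pL_x^2}$ coming from \eqref{con-f'}, \eqref{emb} and \eqref{reg-h1}; in the borderline case $\gamma=1$ I would instead take $\nu=\beta\in(0,1/2)$ and invoke the fractional bound on $\|F(u(r))\|_\beta$ from Lemma \ref{lm-fx} together with \eqref{reg-h2}, which restores an integrable singularity and yields the order $h^2+\tau$. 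The time-freezing part is treated with the $H$-stability \eqref{shm} and the polynomial-growth/mean-value estimate $\|F(u(r))-F(u(s))\|_{L_x^2}\le C(1+\|u(r)\|^{q-2}+\|u(s)\|^{q-2})\|u(r)-u(s)\|$ in a suitable Lebesgue exponent, after which Hölder's inequality in $\omega$, the moment bounds \eqref{reg-h1+}--\eqref{reg-h2} and the sharp temporal Hölder regularity of Theorem \ref{tm-hol} and Corollary \ref{cor-hol} produce the factor $\tau^{1/2}$; here the spatial integrability exponents must be balanced against the temporal Hölder exponents of Corollary \ref{cor-hol} under the dimension restriction \eqref{q}.

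For the stochastic convolution I would freeze the diffusion at the left endpoint of each subinterval, so that $G(u(t_i))$ stays adapted, and split into the operator part $\int_0^{t_{m+1}}E_{h,\tau}(t_{m+1}-r)G(u(r))\,{\rm d}W(r)$ and the time-freezing part $\sum_i\int_{t_i}^{t_{i+1}}S_{h,\tau}^{m+1-i}\PP_h\bigl[G(u(r))-G(u(t_i))\bigr]\,{\rm d}W(r)$, estimating both after the discrete and continuous Burkholder--Davis--Gundy inequalities \eqref{bdg-dis} and \eqref{bdg2}. The time-freezing part follows from the Lipschitz continuity \eqref{con-g} and the regularity $\|u(r)-u(t_i)\|_{L_\omega^pL_x^2}\le C\tau^{1/2}$ of Theorem \ref{tm-hol}. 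The operator part is more delicate: the pointwise estimate \eqref{eht1} with $\nu=\gamma$, $\mu=1+\gamma$ produces a borderline non-integrable (logarithmically divergent) singularity, so the sharp spatial order $h^{1+\gamma}$ has to be extracted from the integrated estimate \eqref{eht2} applied to the frozen increments, using that $\|G(u(t_i))\|_{\LL_2^\gamma}$ is bounded in every moment through \eqref{con-g1}, interpolation between $\LL_2^0$ and $\LL_2^1$, and \eqref{reg-h1+} (with $\LL_2^1$ and \eqref{reg-h2} when $\gamma=1$).

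I expect this last step---realizing the sharp, logarithm-free spatial rate $h^{1+\gamma}$ for the stochastic operator error---to be the principal obstacle, precisely because it is where the integrated bound \eqref{eht2} rather than the pointwise bound \eqref{eht1} is indispensable and where the It\^o/BDG structure of the stochastic integral must be matched to the frozen-in-time discrete convolution. The second delicate point is the non-Lipschitz drift, whose time-freezing term forces the simultaneous use of polynomial growth, high-order moment estimates and the sharp Hölder regularity of Corollary \ref{cor-hol}, with the admissible exponents tightly constrained by \eqref{q}. Once every contribution is bounded by $C(h^{1+\gamma}+\tau^{1/2})$ uniformly in $m$, taking the supremum over $m\in\zz_M$ gives \eqref{u-uhathm}, and the case $\gamma=1$ follows upon replacing \eqref{reg-h1+} by \eqref{reg-h2} and invoking Assumption \ref{ap-g2}.
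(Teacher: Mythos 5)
Your outline reproduces the paper's skeleton (mild solution minus \eqref{aux1}, three error terms, the same treatment of the initial-datum term via \eqref{eht1} with $\mu=\nu=1+\gamma$, of the deterministic operator error via $\nu=0$ for $\gamma<1$ and $\nu=\beta$ with Lemma \ref{lm-fx} at $\gamma=1$, and of the stochastic time-freezing term via \eqref{con-g}, \eqref{shm}, \eqref{bdg-dis}--\eqref{bdg2} and Theorem \ref{tm-hol}), but your treatment of the drift time-freezing term has a genuine gap. You place that term under the discrete operator $S_{h,\tau}^{m+1-i}\PP_h$, for which only the $H$-stability \eqref{shm} is available, so you are forced to measure $F(u(r))-F(u(t_{i+1}))$ in $L_x^2$. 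By the mean value theorem, \eqref{con-f'} and H\"older's inequality this costs $\|1+|u(r)|^{q-2}+|u(t_{i+1})|^{q-2}\|_{L_x^{2(q-1)/(q-2)}}\,\|u(r)-u(t_{i+1})\|_{L_x^{2(q-1)}}$, i.e.\ the increment must be taken either in $L_x^{2(q-1)}$ or against an $L_x^\infty$ weight (unavailable for $u_0\in\dot H^{1+\gamma}$ with small $\gamma$ when $d\ge 2$). But the sharp rate $\tau^{1/2}$ is established only in $L_x^2$ (Theorem \ref{tm-hol}); Corollary \ref{cor-hol} in $\dot H^\beta$ gives exponent $\frac{1+\gamma-\beta}{2}\wedge\frac12$, which drops strictly below $\frac12$ exactly when $\beta>\gamma$ --- in particular for $\gamma=0$, where any embedding $\dot H^\beta\hookrightarrow L_x^{2(q-1)}$ requires $\beta>0$. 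The ``balancing of exponents'' you flag as delicate therefore genuinely fails to produce $\tau^{1/2}$ uniformly in $\gamma\in[0,1)$. The paper's decomposition is the opposite of yours precisely to avoid this: it keeps the \emph{continuous} semigroup on the time-freezing term and combines the dual estimate \eqref{f-1}, $\|F(u)-F(v)\|_{-1}\le C(1+\|u\|_1^{q-2}+\|v\|_1^{q-2})\|u-v\|$, with the smoothing $\|(-A)^{1/2}S(t_{m+1}-r)\|_{\LL(H)}\le C(t_{m+1}-r)^{-1/2}$ from \eqref{ana}, so that only the $L_x^2$-H\"older rate of Theorem \ref{tm-hol} is consumed. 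To rescue your split you would need a discrete smoothing bound of the type $\|S_{h,\tau}^{m}\PP_h\|_{\LL(\dot H^{-1};H)}\le C\,t_m^{-1/2}$, which is nowhere among the stated tools.

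Your handling of the stochastic operator error also misfires, though less fatally. The logarithmic divergence you predict is an artifact of choosing $\nu=\gamma$ in \eqref{eht1}: since \eqref{con-g1} gives $G(u(r))\in\LL_2^1$ with moments controlled by \eqref{reg-h1}, the paper applies \eqref{eht1} with $(\mu,\nu)=(1+\gamma,1)$, and after \eqref{bdg2} the squared singularity is $r^{-\gamma}$, integrable for every $\gamma\in[0,1)$ --- no logarithm and no need for \eqref{eht2}; see \eqref{j3}. Moreover, your proposed substitute --- applying the integrated bound \eqref{eht2} to the frozen increments --- does not directly apply: \eqref{eht2} is stated for a single fixed vector $x\in\dot H^\mu$, whereas your frozen coefficients $G(u(t_i))$ change with $i$, so $\sum_i\int_{t_i}^{t_{i+1}}\|E_{h,\tau}(t_{m+1}-r)G(u(t_i))\|^2_{\LL_2^0}\,{\rm d}r$ is not an instance of \eqref{eht2}; that device works in the additive case $G(u)\equiv G$ (as in the paper's remark following the lemma) but would here require proving a piecewise variant. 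At $\gamma=1$ the singularity with $\nu=1$ genuinely is borderline, and this is exactly why the second assertion of the lemma assumes Assumption \ref{ap-g2}, which the paper exploits with $(\mu,\nu)=(2,1+\theta)$ in \eqref{j32}.
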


\begin{proof}
Let $m\in \zz_{M-1}$.
Subtracting the auxiliary process $\widetilde{u}_h^{m+1}$ defined by \eqref{aux1} from the mild formulation \eqref{mild} with $t=t_{m+1}$, i.e., 
\begin{align*} 
u(t_{m+1})
&=S(t_{m+1}) u_0+\int_0^{t_{m+1}} S(t_{m+1}-r) F(u) {\rm d}(r)
+\int_0^{t_{m+1}} S(t_{m+1}-r) G(u) {\rm d}W(r),
\end{align*}
we get
\begin{align} \label{j}
J^{m+1}:=\|u(t_{m+1})-\widetilde{u}_h^{m+1}\|_{L_\omega^p L_x^2}
\le \sum_{i=1}^3 J^{m+1}_i,
\end{align}
where 
\begin{align*} 
J^{m+1}_1 & =\| E_{h,\tau}(t_{m+1}) u_0\|_{L_\omega^p L_x^2}, \\
J^{m+1}_2 & =\Big\|\int_0^{t_{m+1}} S(t_{m+1}-r) F(u) {\rm d}r 
-\tau \sum_{i=0}^m S_{h,\tau}^{m+1-i} \PP_h F(u(t_{i+1})) \Big\|_{L_\omega^p L_x^2},   \\
J^{m+1}_3 & =\Big\|\int_0^{t_{m+1}} S(t_{m+1}-r) G(u) {\rm d}W(r) 
-\sum_{i=0}^m S_{h,\tau}^{m+1-i} \PP_h G(u(t_i)) \delta_i W \Big\|_{L_\omega^p L_x^2}.
\end{align*}
In the sequel, we treat the above three terms one by one.

The estimation \eqref{eht1} with $\mu=\nu=1+\gamma$ yields that 
\begin{align}  \label{j1}
J^{m+1}_1
\le C \big(h^{1+\gamma}+\tau^\frac{1+\gamma}2 \big) \|u_0\|_{1+\gamma},
\quad \gamma\in [0,1].
\end{align}
To deal with the second term, we decompose it into the following two terms:
\begin{align*} 
J^{m+1}_2 
&\le \Big\|\sum_{i=0}^m \int_{t_i}^{t_{i+1}} S(t_{m+1}-r) 
[F(u)-F(u(t_{i+1}))] {\rm d}r \Big\|_{L_\omega^p L_x^2} \\
&\quad +\Big\|\sum_{i=0}^m \int_{t_i}^{t_{i+1}} 
E_{h,\tau}(t_{m+1}-r) F(u(t_{i+1})) {\rm d}r \Big\|_{L_\omega^p L_x^2} 
:=\sum_{i=1}^2 J^{m+1}_{2i}.
\end{align*}
Similarly to \eqref{-1}, we have by the embedding \eqref{emb} that 
\begin{align} \label{f-1}
\|(-A)^{-\frac12} (F(u)-F(v))\| 
& =\sup_{z\in \dot H^1} \frac{{_{-1}}\<F(u)-F(v), z\>_1}{\|z\|_1}  \nonumber \\
& \le C \big(1+\|u\|^{q-2}_{L_x^{2(q-1)}}+\|v\|^{q-2}_{L_x^{2(q-1)}} \big)
\|u-v\| \cdot \sup_{z\in \dot H^1} \frac{\|z\|_{L_x^{2(q-1)}}}{\|z\|_1} \nonumber \\
&\le C \big(1+\|u\|^{q-2}_1+\|v\|^{q-2}_1 \big) \|u-v\|.
\end{align}
By Minkovskii inequality, the uniform boundedness \eqref{ana}, the above dual estimation \eqref{f-1} and the H\"older estimate \eqref{hol-u}, we get
\begin{align*} 
J^{m+1}_{21} 
&\le C \Big(1+\|u\|^{q-2}_{L_t^\infty L_\omega^{2p(q-2)} \dot H^1} \Big)
 \Big[ \sup_{0\le s < t \le T} \frac{\|u(t)-u(s)\|_{L_\omega^{2p} L_x^2}}
{|t-s|^{1/2}} \Big]  \\
&\qquad \times \Big[\sum_{i=0}^m \int_{t_i}^{t_{i+1}} \|S(t_{m+1}-r) \|_{\LL(H; \dot H^1)} (t_{i+1}-r)^\frac12  {\rm d}r \Big] \\
&\le C \big(1+\|u_0\|_1^{(q-1)(q-2)} \big) \tau^\frac12.
\end{align*}
Applying Minkovskii inequality and using \eqref{eht1} with $\mu=1+\gamma$ with $\gamma\in [0,1)$ and $\nu=0$, the embedding \eqref{emb} and the estimations \eqref{reg-h1}, we derive  
\begin{align}  \label{j22}
J^{m+1}_{22}
&\le \sum_{j=0}^m \int_{t_j}^{t_{j+1}} 
\|E_{h,\tau}(\sigma) F(u(t_{m+1-j}))\|_{L_\omega^p L_x^2}   {\rm d}\sigma 
\nonumber \\
&\le C (h^{1+\gamma}+\tau^\frac{1+\gamma}2) 
\|F(u)\|_{L_t^\infty L_\omega^p L_x^2} 
\Big[\sum_{j=0}^m \int_{t_j}^{t_{j+1}} \sigma^{-\frac{1+\gamma}2} {\rm d}\sigma \Big]  \nonumber \\
&\le C \big(h^{1+\gamma}+\tau^\frac{1+\gamma}2\big) 
\big(1+\|u_0\|_1^{q-1} \big),
\quad \gamma\in [0,1).
\end{align}
Combining the above two estimations implies that 
\begin{align}  \label{j2}
J^{m+1}_2\le C \big(h^{1+\gamma}+\tau^\frac12 \big)  
\big(1+ \|u_0\|_1^{q-1}+\|u_0\|_1^{(q-1)(q-2)} \big),
\quad \gamma\in [0,1).
\end{align}

The last term $J^{m+1}_3$ can be decomposed as 
\begin{align*} 
J^{m+1}_3 
&\le \Big\|\sum_{i=0}^m \int_{t_i}^{t_{i+1}} S_{h,\tau}^{m+1-i} \PP_h
[G(u)-G(u(t_i))] {\rm d}W(r) \Big\|_{L_\omega^p L_x^2} \\
&\quad +\Big\|\int_0^{t_{m+1}} E_{h,\tau}(t_{m+1}-r) G(u) {\rm d}W(r) \Big\|_{L_\omega^p L_x^2} 
:=\sum_{i=1}^2 J^{m+1}_{3i}.
\end{align*}
Applying both the discrete and continuous Burkholder--Davis--Gundy inequalities \eqref{bdg-dis} and \eqref{bdg2}, respectively, and using the uniform stability property \eqref{shm}, the conditions \eqref{con-g}-\eqref{con-g1}, and the estimations \eqref{reg-h1}, \eqref{hol-u}, and \eqref{eht1} with $(\mu,\nu)=(1+\gamma,1)$ for $\gamma\in [0,1)$, we get 
\begin{align}  \label{j3} 
J^{m+1}_3
&\le C \Big(\sum_{i=0}^m \int_{t_i}^{t_{i+1}} 
\|S_{h,\tau}^{m+1-i} \PP_h [G(u)-G(u(t_i))]\|^2_{L_\omega^p \LL_2^0} {\rm d}r \Big)^\frac12  \nonumber  \\
&\quad +C \Big(\int_0^{t_{m+1}}
\|E_{h,\tau}(t_{m+1}-r) G(u)\|^2_{L_\omega^p \LL_2^0} {\rm d}r \Big)^\frac12  \nonumber \\
&\le C \Big[ \sup_{0\le s < t \le T}\frac{\|u(t)-u(s)\|_{L_\omega^p L_x^2}}{(t-s)^{1/2}} \Big]
\Big(\sum_{i=0}^m \int_{t_i}^{t_{i+1}} (r-t_i) {\rm d}r \Big)^\frac12 \nonumber  \\ 
&\quad +C \Big(h^{1+\gamma}+\tau^\frac{1+\gamma}2\Big) 
\Big(1+\|u\|_{L_t^\infty L_\omega^p \dot H^1} \Big) 
\Big(\int_0^{t_{m+1}} r^{-\gamma} {\rm d}r \Big)^\frac12 \nonumber  \\
&\le C \big(1+\|u_0\|_1^{q-1} \big) \big(h^{1+\gamma}+\tau^\frac12\big), 
\quad \gamma\in [0,1).
\end{align}
Putting the estimations \eqref{j1}, \eqref{j2}, and \eqref{j3} together results in
\begin{align*} 
J^{m+1}
\le C \big(1+\|u_0\|_1^{q-1}+\|u_0\|_1^{(q-1)(q-2)} \big)
\big(h^{1+\gamma}+\tau^\frac12 \big), 
\quad \gamma\in [0,1).
\end{align*}
This inequality in combination with estimation on $J^0$ that  
\begin{align*} 
J^0=\|u_0-\PP_h u_0\|_{L_\omega^p L_x^2}
\le C h^{1+\gamma} \|u_0\|_{1+\gamma},
\quad \gamma\in [0,1],
\end{align*}
completes the proof of \eqref{u-uhathm} with $\gamma\in [0,1)$.

To show \eqref{u-uhathm} for $\gamma=1$, we just need to give refined estimations for the $J^{m+1}_{22}$ and $J^{m+1}_{32}$ provided $u_0\in \dot H^2$ and \eqref{con-g2} holds.
Applying Minkovskii inequality and using \eqref{eht1} with $\mu=2$ and $\nu=\beta\in (0,1)$, the embedding $\dot H^2 \hookrightarrow \dot H^\beta \cap L_x^\infty$, and the estimations \eqref{reg-h1}, \eqref{h1+infty}, and \eqref{fx1}, we derive  
\begin{align} \label{j22+}
J^{m+1}_{22}
&\le C (h^2+\tau) 
\Big(1+\|u\|^{q-1}_{L_t^\infty L_\omega^p \dot H^\beta}
+\|u\|^{q-1}_{L_t^\infty L_\omega^p L_x^\infty}\Big) 
\Big[\sum_{j=0}^m \int_{t_j}^{t_{j+1}} \sigma^{-\frac{2-\beta}2} {\rm d}\sigma \Big]  \nonumber \\
&\le C \big(h^2+\tau \big) \big(1+\|u_0\|_2^{(q-1)^2} \big).
\end{align}
On the other hand, \eqref{eht1} with $\mu=2$ and $\nu=1+\theta$, the condition \eqref{con-g2} and the estimation \eqref{reg-h1+} imply that 
\begin{align}  \label{j32} 
J^{m+1}_{32}
& \le C (h^2+\tau) \|G(u)\|_{L_t^\infty L_\omega^p \dot H^{1+\theta}} 
\Big(\int_0^T \sigma^{-(1-\theta)} {\rm d}\sigma \Big)^\frac12 \nonumber \\ 
& \le C  \big(1+\|u_0\|^{q-1}_2 \big) \big(h^2+\tau \big).
\end{align}
\end{proof}

\begin{rk}
Assumption \ref{ap-g2} is not needed in the additive noise case $G(u)\equiv G \in \LL_2^1$ for any $u\in H$ when $\gamma=1$.
In fact, the Burkholder--Davis--Gundy inequality \eqref{bdg2} and the estimation \eqref{eht2} with $\mu=1$ show the following sharp estimation:
\begin{align*}
\Big\|\int_0^{t_{m+1}} E_{h,\tau}(t_{m+1}-r) {\rm d}W(r) \Big\|_{L_\omega^p L_x^2}
\le C \Big( \int_0^T \|E_{h,\tau}(r) \|^2_{\LL_2^0} {\rm d}r \Big)^\frac12 
\le C (h^2+\tau) \|G \|_{\LL_2^1}.
\end{align*}
\end{rk}

Combining Lemma \ref{lm-u-uhm} with a variational approach under the monotone Assumption \eqref{ap-f}, we can prove Theorem \ref{main1} on the strong convergence rate between the solution $u$ of Eq. \eqref{see} and the numerical solution $u_h^m$ of the DIEG scheme \eqref{full}, which gives a positive answer to Conjecture \ref{Q1}.

{\bf Proof of Theorem \ref{main1}}
Let $m\in \zz_{M-1}$.
By Minkovskii inequality, we get 
\begin{align*} 
\|u(t_{m+1})-u_h^{m+1}\|_{L_\omega^p L_x^2}
\le \|u(t_{m+1})-\widetilde{u}_h^{m+1}\|_{L_\omega^p L_x^2}
+\|\widetilde{u}_h^{m+1}-u_h^{m+1}\|_{L_\omega^p L_x^2}.
\end{align*}
In terms of \eqref{u-uhathm}, it suffices to prove that 
\begin{align}  \label{u-uhm2}
\sup_{m\in \zz_M} \|\widetilde{u}_h^m-u_h^m\|_{L_\omega^2 L_x^2}
\le C \big(h^{1+\gamma}+\tau^\frac12 \big),
\quad \gamma\in [0,1].
\end{align}

Define $\widetilde{e}_h^{m+1}:=\widetilde{u}_h^{m+1}-u_h^{m+1}$.
Then $\widetilde{e}_h^{m+1}\in V_h$
with vanishing initial datum $\widetilde{e}_h^0=0$.
In terms of Eq. \eqref{full+} and \eqref{aux1}, it is not difficult to show that 
\begin{align*}
u_h^{m+1}
&=u_h^m+\tau A_h u_h^{m+1}
+\tau \PP_h F(u_h^{m+1})
+\PP_h G(u_h^m) \delta_m W, \\
\widetilde{u}_h^{m+1}
&=\widetilde{u}_h^m+\tau A_h \widetilde{u}_h^{m+1}
+\tau \PP_h F(u(t_{m+1}))
+\PP_h G(u(t_m)) \delta_m W.
\end{align*}
Consequently, 
\begin{align*}
\widetilde{e}_h^{m+1}-\widetilde{e}_h^m
&=\tau A_h \widetilde{e}_h^{m+1}
+\tau \PP_h (F(u(t_{m+1}))-F(u_h^{m+1})) \nonumber \\
&\quad +\PP_h (G(u(t_m))-G(u_h^m)) \delta_m W.
\end{align*}
Multiplying $\widetilde{e}_h^{m+1}$ on both sides of the above equation, we obtain
\begin{align} \label{eq-u-uhm}
& \<\widetilde{e}_h^{m+1}-\widetilde{e}_h^m, \widetilde{e}_h^{m+1}\>
+\tau \|\nabla \widetilde{e}_h^{m+1}\|^2 \nonumber \\
&=\tau ~{_{-1}}\<F(u(t_{m+1}))-F(\widetilde{u}_h^{m+1}), \widetilde{e}_h^{m+1}\>_1
+\tau ~ {_{-1}}\<F(\widetilde{u}_h^{m+1})-F(u_h^{m+1}), \widetilde{e}_h^{m+1}\>_1 \nonumber \\
&\quad +\int_{t_m}^{t_{m+1}} \<\widetilde{e}_h^{m+1}, (G(u(t_m))-G(u_h^m)) {\rm d}W(r)\>.
\end{align}
By the dual estimation \eqref{f-1}, the one-sided Lipschitz condition \eqref{F-mon}, Cauchy--Schwarz inequality and the embedding \eqref{emb}, the right-hand side term of the above equation can be controlled by
\begin{align}\label{k0-}
& \tau \|(-A)^{-\frac12} (F(u(t_{m+1}))-F(\widetilde{u}_h^{m+1})\|
\times \|\nabla \widetilde{e}_h^{m+1}\|
+L_f \tau \|\widetilde{e}_h^{m+1}\|^2 \nonumber \\
&\quad +\int_{t_m}^{t_{m+1}} \<\widetilde{e}_h^{m+1}, (G(u(t_m))-G(u_h^m)) {\rm d}W(r)\>  \nonumber \\
&\le \frac \tau2 \|\nabla \widetilde{e}_h^{m+1}\|^2 
+L_f \tau \|\widetilde{e}_h^{m+1}\|^2 \nonumber \\
&\quad +C \tau \|u(t_{m+1})-\widetilde{u}_h^{m+1}\|^2 
\big(1+\|u(t_{m+1})\|^{q-2}_1 +\|\widetilde{u}_h^{m+1}\|^{q-2}_1 \big) 
\nonumber \\
&\quad +\int_{t_m}^{t_{m+1}} \<\widetilde{e}_h^{m+1}, (G(u(t_m))-G(u_h^m)) {\rm d}W(r)\>.
\end{align}

The elementary identity $(a-b)a=\frac12 (a^2-b^2)+\frac12 (a-b)^2$ for $a,b\in \rr$ yields that 
\begin{align}\label{k0}
\<\widetilde{e}_h^{m+1}-\widetilde{e}_h^m, \widetilde{e}_h^{m+1} \>
&= \frac12 \big(\|\widetilde{e}_h^{m+1}\|^2-\|\widetilde{e}_h^m\|^2 \big)
+\frac12 \|\widetilde{e}_h^{m+1}-\widetilde{e}_h^m\|^2.
\end{align}
To deal with the stochastic term in \eqref{eq-u-uhm}, we use the martingale property of the stochastic integral, Cauchy--Schwarz inequality, and It\^o isometry to deduce that 
\begin{align} \label{k1-}
& \ee\Big[\int_{t_m}^{t_{m+1}} \<\widetilde{e}_h^{m+1}, (G(u(t_m))-G(u_h^m)) {\rm d}W(r)\>\Big] \nonumber \\
&= \ee\Big[\int_{t_m}^{t_{m+1}} \<\widetilde{e}_h^{m+1}-\widetilde{e}_h^m,
(G(u(t_m))-G(u_h^m)) {\rm d}W(r)\>\Big] \nonumber \\
&\le \frac12  \ee\Big[\|\widetilde{e}_h^{m+1}-\widetilde{e}_h^m\|^2\Big]+L_g^2 \ee\Big[\|u(t_m)-\widetilde{u}_h^m\|^2 \Big] \tau
+L_g^2 \ee\Big[\|\widetilde{e}_h^m\|^2 \Big] \tau.
\end{align}
Now taking expectation in \eqref{eq-u-uhm}, we use \eqref{k0-}-\eqref{k1-} to get 
\begin{align*} 
& \frac12 \Big(\ee\Big[\|\widetilde{e}_h^{m+1}\|^2\Big]
-\ee\Big[\|\widetilde{e}_h^m\|^2\Big]\Big) \nonumber \\
&\le L_f \ee\Big[\|\widetilde{e}_h^{m+1}\|^2\Big] \tau
+C \Big(\ee\Big[\|u(t_{m+1})-\widetilde{u}_h^{m+1}\|^4\Big]\Big)^\frac12 \tau \nonumber \\
&\quad \times \Big(1+\Big(\ee\Big[\|u(t_{m+1})\|_1^{2(q-2)}\Big] \Big)^\frac12
+\ee\Big[\|\widetilde{u}_h^{m+1}\|_1^{2(q-2)}\Big] \Big)^\frac12 \Big)  \nonumber \\
&\quad +L_g^2 \ee\Big[\|u(t_m)-\widetilde{u}_h^m\|^2 \Big] \tau
+L_g^2 \ee\Big[\|\widetilde{e}_h^m\|^2 \Big] \tau.
\end{align*}
Then by the estimations \eqref{reg-h1}, \eqref{aux1-h1}, and \eqref{u-uhathm}, we obtain
\begin{align} \label{eq-u-uhm1}
\big(1-2L_f \tau \big) \ee\Big[\|\widetilde{e}_h^{m+1}\|^2\Big]
\le C \tau \big(h^{1+\gamma}+\tau^\frac12\big)^2
+\big(1+2L_g^2 \tau \big) \ee\Big[\|\widetilde{e}_h^m\|^2\Big].
\end{align}
Summing over $m=0,1,\cdots,l-1$ with $1\le l\le M$, we obtain
\begin{align*}
\big(1-2 L_f \tau\big) \ee\Big[\|\widetilde{e}_h^l\|^2\Big]
\le C \big(h^{1+\gamma}+\tau^\frac12\big)^2
+ 2 \big(L_f+L_g^2\big) \tau \sum_{m=0}^{l-1} \ee\Big[\|\widetilde{e}_h^m\|^2\Big],
\end{align*}
from which we conclude \eqref{u-uhm2} by the classical discrete Gr\"onwall inequality and thus completes the proof.

\begin{rk}
Our analysis can be extended to a slight general nonlinear drift $\tilde F =\tilde F(u; \nabla u)=F(u)+ C \nabla u$, where $F$ is the Nemytskii operator associated with $f$ that satisfies Assumption \eqref{ap-f} and $C$ is a constant.
Indeed, for the estimate of $u(t_m)-\widetilde{u}_h^m$ in Lemma \ref{lm-u-uhm}, the corresponding term $J_2^{m+1}$ involving $F(u)$ has the same estimate as \eqref{j2};
for the estimate of $\widetilde{e}_h^{m+1}$ in the proof of Theorem \ref{main1}, one only needs to estimate the term $C \tau \<\nabla (u(t_{m+1})-\widetilde{u}_h^{m+1}), \widetilde{e}_h^{m+1}\>+ C \tau \<\nabla \widetilde{e}_h^{m+1}, \widetilde{e}_h^{m+1}\>$ added in the right-hand side of \eqref{eq-u-uhm}.
It can be bounded by $C \tau \|u(t_{m+1})-\widetilde{u}_h^{m+1}\|^2 
+ C \tau \|\widetilde{e}_h^{m+1}\|^2 
+ \frac \tau2 \|\nabla \widetilde{e}_h^{m+1}\|^2$, that appeared also in \eqref{k0-}, so the proof of Theorem \ref{main1} suffices.
\end{rk}

\subsection{Drift-implicit Euler--Spectral Galerkin Scheme}
\label{sec4.3}

The parallel result, Theorem \ref{main1}, for DIEG scheme \eqref{full} in Section \ref{sec4.1} is also valid for the spectral Galerkin-based Euler scheme of Eq. \eqref{see}.

Let us first recall and introduce more related notations in Section \ref{sec3.1}.
Given an $N\in \nn_+$, $V_N$ is the linear space spanned by the first $N$ eigenvectors of Dirichlet Laplacian.
Let $\PP_N: H \rightarrow V_N$ be the orthogonal projection operator in $H$ and $A_N$ be the Laplacian restricted in $V_N$.

The spectral Galerkin approximation for Eq. \eqref{see} is to find a sequence of $\ff$-adapted $V_N$-valued processes $\{u_N(t):\ t\in [0,T]\}$ such that 
\begin{align}\label{spe}
du_N(t)
&=(A_N u_N(t)+\PP_N F(u_N(t))){\rm d}t +\PP_N G(u_N(t)) {\rm d}W(t),
\quad t\in [0,T],
\end{align}
with initial datum $u_N(0)=\PP_N u_0$.
By the arguments in the proof of Theorem \ref{main1}, one can show the similar solvability and convergence results for the DIE spectral Galerkin (DIESG) scheme
\begin{align}\label{full-n} \tag{DIESG}
&u_N^{m+1}
=u_N^m+\tau A_N u_N^{m+1}
+\tau \PP_N F(u_N^{m+1})
+\PP_N G(u_N^m) \delta_m W,
\quad u_N^0=\PP_N u_0.
\end{align}

\begin{tm}  \label{u-uNm}
Let $\gamma\in [0,1)$, $u_0\in \dot H^{1+\gamma}$, and Assumptions \ref{ap-f}-\ref{ap-g} hold.
For any $p\ge 1$, there exist a constant $C$ such that 
\begin{align}
\sup_{m\in \zz_M} \|u(t_m)-u_N^m\|_{L_\omega^2 L_x^2} 
\le C (N^{-\frac{1+\gamma} d}+\tau^\frac12 ). 
 \label{u-unm}
\end{align}
Assume furthermore that $u_0\in \dot H^2$ and Assumption \ref{ap-g2} holds, then \eqref{u-unm} is valid with $\gamma=1$.
\end{tm}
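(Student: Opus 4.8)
The plan is to reproduce the proof of Theorem \ref{main1} at the structural level, replacing the finite element operators $A_h,\PP_h,S_{h,\tau}$ by their spectral counterparts $A_N,\PP_N,S_{N,\tau}:=({\rm Id}-\tau A_N)^{-1}$, and the error operator $E_{h,\tau}$ by $E_{N,\tau}(t):=S(t)-S_{N,\tau}^{m+1}\PP_N$ for $t\in(t_m,t_{m+1}]$. The decisive simplification is that $\PP_N$, $A_N$, $S(t)$, $S_{N,\tau}$ and every fractional power $(-A)^\theta$ are simultaneously diagonal in the eigenbasis $\{e_k\}$, so all operator bounds can be read off by spectral calculus.

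First I would establish the spectral analogue of Lemma \ref{lm-eht}. Writing $x=\sum_k x_k e_k$, one has for $t\in(t_m,t_{m+1}]$
\[
E_{N,\tau}(t)x=\sum_{k\le N}\big(e^{-\lambda_k t}-(1+\tau\lambda_k)^{-(m+1)}\big)x_k e_k+\sum_{k>N}e^{-\lambda_k t}x_k e_k,
\]
which splits the error into a low-frequency temporal part and a high-frequency truncation part. The low-frequency sum is estimated exactly as in the scalar Euler analysis (rational approximation of $e^{-\lambda t}$), yielding the $\tau^{\mu/2}$-contribution, while the truncation tail is controlled via $\lambda_k\ge\lambda_{N+1}$ for $k>N$. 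Invoking the Weyl asymptotics for the Dirichlet Laplacian, $C_1 N^{2/d}\le\lambda_N\le C_2 N^{2/d}$, the factor $\lambda_{N+1}^{-\mu/2}$ becomes $C N^{-\mu/d}$, which is precisely the replacement of $h^\mu$ by $N^{-\mu/d}$. This produces the spectral versions of \eqref{eht1} and \eqref{eht2}, namely $\|E_{N,\tau}(t)x\|\le C(N^{-\mu/d}+\tau^{\mu/2})t^{-(\mu-\nu)/2}\|x\|_\nu$ for $0\le\nu\le\mu\le2$, together with the corresponding $L_t^2$-estimate for $0\le\mu\le1$.

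With these bounds in hand I would introduce the auxiliary process
\[
\widetilde{u}_N^{m+1}=S_{N,\tau}^{m+1}u_N^0+\tau\sum_{i=0}^m S_{N,\tau}^{m+1-i}\PP_N F(u(t_{i+1}))+\sum_{i=0}^m S_{N,\tau}^{m+1-i}\PP_N G(u(t_i))\delta_i W,
\]
in perfect analogy with \eqref{aux1}, and reprove Lemma \ref{lm-u-uhm} by subtracting $\widetilde{u}_N^{m+1}$ from the mild formulation \eqref{mild}. The three resulting terms are handled exactly as before: the initial and truncation pieces use the new $E_{N,\tau}$-estimates, while the remaining pieces use the dual bound \eqref{f-1}, the H\"older regularity \eqref{hol-u}, the conditions \eqref{con-g}--\eqref{con-g1}, and the two Burkholder--Davis--Gundy inequalities \eqref{bdg2} and \eqref{bdg-dis}. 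The initial datum contributes $\|({\rm Id}-\PP_N)u_0\|\le\lambda_{N+1}^{-(1+\gamma)/2}\|u_0\|_{1+\gamma}\le C N^{-(1+\gamma)/d}\|u_0\|_{1+\gamma}$. The upshot is $\sup_m\|u(t_m)-\widetilde{u}_N^m\|_{L_\omega^p L_x^2}\le C(N^{-(1+\gamma)/d}+\tau^{1/2})$ for every $p\ge1$, with the endpoint $\gamma=1$ covered under Assumption \ref{ap-g2} through \eqref{con-g2}.

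Finally I would close the argument with the same variational/monotone estimate as in the proof of Theorem \ref{main1}. Setting $\widetilde{e}_N^{m}:=\widetilde{u}_N^m-u_N^m\in V_N$, testing the one-step difference against $\widetilde{e}_N^{m+1}$, and using $\<A_N\widetilde{e}_N^{m+1},\widetilde{e}_N^{m+1}\>=-\|\nabla\widetilde{e}_N^{m+1}\|^2$ together with the one-sided Lipschitz bound \eqref{F-mon}, the dual estimate \eqref{f-1}, It\^o isometry and the discrete Gr\"onwall inequality yields $\sup_m\|\widetilde{e}_N^m\|_{L_\omega^2 L_x^2}\le C(N^{-(1+\gamma)/d}+\tau^{1/2})$; combining this with the auxiliary estimate gives \eqref{u-unm}. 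I do not expect a genuine obstacle here—the spectral case is in fact cleaner than the finite element one, since $\PP_N$ is the exact orthogonal $H$-projection and commutes with $A$, so that $\<\PP_N\phi,\widetilde{e}_N^{m+1}\>=\<\phi,\widetilde{e}_N^{m+1}\>$ and the generalized-projection subtleties of \eqref{ph} disappear. The one point demanding care is the transition $h\mapsto N^{-1/d}$, which must be justified through the Weyl eigenvalue asymptotics rather than through mesh parameters; once that scaling is fixed, every estimate in Sections \ref{sec4.1}--\ref{sec4.2} transfers line by line.
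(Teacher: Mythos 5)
Your proposal is correct and follows essentially the same route as the paper, which proves Theorem \ref{u-uNm} only implicitly by remarking that the arguments for Theorem \ref{main1} carry over to the spectral setting; your transfer is exactly that, with the two details the paper leaves tacit---the spectral analogue of Lemma \ref{lm-eht} via diagonalization of $E_{N,\tau}$ in the eigenbasis, and the Weyl asymptotics $\lambda_N\simeq N^{2/d}$ justifying the replacement $h\mapsto N^{-1/d}$---correctly supplied. The closing variational/monotone Gr\"onwall step and the treatment of the $\gamma=1$ endpoint under Assumption \ref{ap-g2} also match the paper's proof of Theorem \ref{main1} line by line.
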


\section{Milstein--Galerkin Scheme}
\label{sec5}

Our main aim in this last section is to construct a temporal higher-order Galerkin-based fully discrete Milstein scheme for Eq. \eqref{see} based on It\^o--Taylor expansion on the diffusion term.
We will give the proof of Theorem \ref{main2} at the end of Section \ref{sec5.2}.

The Milstein scheme of the finite element approximation \eqref{fem} is to find a $V_h$-valued discrete process $\{U_h^m:\ m\in \zz_M\}$ such that 
\begin{align}\label{milstein} \tag{DIEMG}
\begin{split}
U_h^{m+1}
&=U_h^m+\tau A_h U_h^{m+1}
+\tau \PP_h F(U_h^{m+1})
+\PP_h G(U_h^m) \delta_m W \nonumber \\
&\quad +\PP_h DG(U_h^m) G(U_h^m) 
\Big[\int_{t_m}^{t_{m+1}} (W(r)-W(t_m)) {\rm d}W(r) \Big], 
\end{split}
\end{align}
with initial datum $U_h^0=\PP_h u_0$.
This scheme is derived by adding the last term in \eqref{milstein} to the DIEG scheme \eqref{full} and followed from an It\^o formula to $G(u)$.
We call \eqref{milstein} the DIE Milstein Galerkin (DIEMG) scheme.
This type of DIEMG scheme had been studied and reviewed in e.g. \cite{JR15(FCM)}, \cite{Kru14(SPDE)}, and references therein.

It is clear that the DIEMG scheme \eqref{full} is equivalent to the following compact scheme for $m\in \zz_{M-1}$:
\begin{align}\label{milstein+}
U_h^{m+1}
& =S_{h,\tau} U_h^m+\tau S_{h,\tau} \PP_h F(U_h^{m+1})
+S_{h,\tau} \PP_h G(U_h^m) \delta_m W \nonumber \\
&\quad +S_{h,\tau} \PP_h DG(U_h^m) G(U_h^m) 
\Big[\int_{t_m}^{t_{m+1}} (W(r)-W(t_m)) {\rm d}W(r) \Big],
\end{align}
with initial datum $U_h^0=\PP_h u_0$, where $S_{h,\tau}$ is given in the previous section.
Iterating \eqref{milstein+} for $m$-times, we obtain 
\begin{align}\label{milstein-sum}
U_h^{m+1}
&=S_{h,\tau}^{m+1} u_h^0+\tau \sum_{i=0}^m S_{h,\tau}^{m+1-i} \PP_h F(U_h^{i+1})
+\sum_{i=0}^m S_{h,\tau}^{m+1-i} \PP_h G(U_h^i) \delta_i W 
\nonumber \\
&\quad +\sum_{i=0}^m S_{h,\tau}^{m+1-i} 
\PP_h DG(U_h^i) G(U_h^i) 
\Big[\int_{t_i}^{t_{i+1}} (W(r)-W(t_i)) {\rm d}W(r) \Big].
\end{align}

It should be noted that in the case of additive noise, the last term of the DIEMG scheme \eqref{milstein} vanishes and thus it reduces to the DIEG scheme \eqref{full} (with $G(u)\equiv G$ for any $u\in H$).
In the following, we first show the DIEMG scheme \eqref{milstein} or equivalently, DIEG scheme \eqref{full} applied to Eq. \eqref{see} with additive noise possesses a temporal higher-order of convergence.
Then we pass to the general multiplicative noise which satisfies the additional Assumption \ref{ap-g+}.

\subsection{Additive Noise}
\label{sec5.1}

In the additive noise case, for simplicity, $G$ is a time independent constant operator, the conditions \eqref{con-g}-\eqref{con-g1} are equivalent to the assumption $G\in \LL_2^1$ which was imposed in \cite{QW19(JSC)} as $\|(-A)^{1/2} {\bf Q}^{1/2} \|_{HS(H;H)}<\infty$ with $G\equiv {\rm Id}$.
We can show more temporal convergence order for the fully discrete scheme \eqref{full}, provided that the following growth condition for the second-order derivative of $f$ holds.

\begin{ap} \label{ap-f+}
$f:\rr\rightarrow \rr$ is second time differentiable and there exist constants $L''_f\in \rr^*_+$ and $\widetilde{q}\ge 3$ with $\widetilde{q}\in [3,\infty)$ for $d=1,2$ and $\widetilde{q}\in [3,5]$ for $d=3$ such that 
\begin{align}  \label{con-f''}
|f''(x)|\le L''_f(1+|x|^{\widetilde{q}-3}),\quad x\in \rr.
\end{align}
\end{ap}

The condition \eqref{con-f''} is consistent with \eqref{con-f'}.
In fact, for an odd $(q-1)$-order polynomial \eqref{ex-f0} with $q$ satisfying \eqref{q}, the conditions \eqref{con-f'} and \eqref{con-f''} are valid with 
$\widetilde{q}=q$ when $q\ge 3$ and $\widetilde{q}=3$ when $q<3$ (i.e., $q=2$).

\begin{cor}  \label{cor-u-uhm}
Let $\gamma\in [0,1]$, $u_0\in \dot H^{1+\gamma}$, $G\in \LL_2^1$, and 
Assumptions \ref{ap-f} and \ref{ap-f+} hold.
Let $u$ and $u_h^m$ be the solutions of Eq. \eqref{see} and \eqref{full} with $G(u)\equiv G$ for any $u\in H$, respectively.
Then there exists a constant $C$ such that
\begin{align} \label{u-uhm0}
\sup_{m\in \zz_M} \|u(t_m)-u_h^m\|_{L_\omega^2 L_x^2}
\le C (h^{1+\gamma}+\tau^\frac{1+\gamma}2 ).
\end{align}
\end{cor}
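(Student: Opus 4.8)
The plan is to follow the two-step architecture of Theorem~\ref{main1}, comparing $u(t_m)$ with $u_h^m$ through the auxiliary process $\widetilde{u}_h^m$ of \eqref{aux1} (now with the constant operator $G$), so that $\|u(t_m)-u_h^m\|_{L_\omega^2 L_x^2}\le\|u(t_m)-\widetilde{u}_h^m\|_{L_\omega^2 L_x^2}+\|\widetilde{u}_h^m-u_h^m\|_{L_\omega^2 L_x^2}$. Everything hinges on upgrading the temporal exponent from $\tfrac12$ to $\tfrac{1+\gamma}2$ in the first summand, i.e. on sharpening Lemma~\ref{lm-u-uhm} in the additive regime; the second summand will then be controlled by a purely pathwise monotonicity argument.

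For the first summand I would reuse the splitting $J^{m+1}\le\sum_{i=1}^3 J_i^{m+1}$ from the proof of Lemma~\ref{lm-u-uhm}. The terms $J_1^{m+1}$ and $J_{22}^{m+1}$ are untouched and already deliver $h^{1+\gamma}+\tau^{(1+\gamma)/2}$ by \eqref{j1} and \eqref{j22} for $\gamma\in[0,1)$, and by the refined bound \eqref{j22+} when $\gamma=1$ (its $L_x^\infty$-control of $u$ being supplied by Proposition~\ref{prop-h1+}, hence without Assumption~\ref{ap-g2}). The decisive simplification is in the noise term: since $G$ is constant, the freezing error $J_{31}^{m+1}$ vanishes identically, while $J_{32}^{m+1}$ is estimated directly by the Burkholder--Davis--Gundy inequality \eqref{bdg2} and \eqref{eht2} (with $\mu=\gamma$, or $\mu=1$ at the endpoint), giving $C(h^{1+\gamma}+\tau^{(1+\gamma)/2})\|G\|_{\LL_2^1}$ and removing the need for Assumption~\ref{ap-g2} even at $\gamma=1$. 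The only genuinely new work is the drift time-discretization term $J_{21}^{m+1}$, which previously produced just $\tau^{1/2}$; here Assumption~\ref{ap-f+} enters. Since $f$ is twice differentiable with the growth \eqref{con-f''}, I would apply It\^o's formula to $s\mapsto F(u(s))$ on each subinterval to write $F(u(t_{i+1}))-F(u(r))=\int_r^{t_{i+1}}\big[DF(u)(Au+F(u))+\tfrac12\mathrm{Tr}(D^2F(u)G{\bf Q}G^*)\big]{\rm d}s+\int_r^{t_{i+1}}DF(u)G\,{\rm d}W$. The martingale part, after the stochastic Fubini theorem interchanges the outer ${\rm d}r$-integral with the It\^o integral, carries the deterministic kernel $\int_{t_i}^{s}S(t_{m+1}-r)\,{\rm d}r=(-A)^{-1}(S(t_{m+1}-s)-S(t_{m+1}-t_i))$ of operator norm $\OOO(\tau)$, so that \eqref{bdg2} bounds it by $\OOO(\tau)$; the Lebesgue (drift-correction) part is likewise $\OOO(\tau)$. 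Hence $J_{21}^{m+1}=\OOO(\tau)\le C\tau^{(1+\gamma)/2}$, and the refined estimate $\sup_{m\in\zz_M}\|u(t_m)-\widetilde{u}_h^m\|_{L_\omega^p L_x^2}\le C(h^{1+\gamma}+\tau^{(1+\gamma)/2})$ follows for all $\gamma\in[0,1]$.

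For the second summand I would imitate the proof of Theorem~\ref{main1}, exploiting that the recursion for $\widetilde{e}_h^{m+1}:=\widetilde{u}_h^{m+1}-u_h^{m+1}$ is now purely deterministic: since $G(u(t_m))=G(u_h^m)=G$, the stochastic forcing in \eqref{eq-u-uhm} disappears, leaving $\widetilde{e}_h^{m+1}-\widetilde{e}_h^m=\tau A_h\widetilde{e}_h^{m+1}+\tau\PP_h(F(u(t_{m+1}))-F(u_h^{m+1}))$ with $\widetilde{e}_h^0=0$. Testing against $\widetilde{e}_h^{m+1}$, using $(a-b)a=\tfrac12(a^2-b^2)+\tfrac12(a-b)^2$, the dual bound \eqref{f-1}, the one-sided Lipschitz inequality \eqref{F-mon}, and Young's inequality to absorb $\tfrac\tau2\|\nabla\widetilde{e}_h^{m+1}\|^2$, I reach pathwise $(1-2L_f\tau)\|\widetilde{e}_h^{m+1}\|^2\le\|\widetilde{e}_h^m\|^2+C\tau\|u(t_{m+1})-\widetilde{u}_h^{m+1}\|^2(1+\|u(t_{m+1})\|_1^{q-2}+\|\widetilde{u}_h^{m+1}\|_1^{q-2})$. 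Taking expectations, invoking the moment bounds \eqref{reg-h1} and \eqref{aux1-h1} together with the refined first-summand estimate, and applying the discrete Gr\"onwall inequality yields $\sup_{m\in\zz_M}\|\widetilde{e}_h^m\|_{L_\omega^2 L_x^2}\le C(h^{1+\gamma}+\tau^{(1+\gamma)/2})$, whence \eqref{u-uhm0} by the triangle inequality.

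The main obstacle I anticipate is the It\^o--Taylor control of $J_{21}^{m+1}$, specifically the rigorous treatment of the drift-correction terms. The piece $DF(u)Au$ cannot be measured in $L_x^2$ (additive noise with $G\in\LL_2^1$ does not place $u$ in $\dot H^2$), so I would pair it by duality against a test function and move the analytic smoothing of $S(t_{m+1}-r)$ across the multiplication operator $f'(u)$; this reduces matters to $f'(u)$ being an $\dot H^1$-multiplier, which is precisely what the growth \eqref{con-f''} of $f''$ secures, in combination with the $\dot H^1\cap L_x^\infty$-regularity from Proposition~\ref{prop-h1+} and Corollary~\ref{cor-infty}. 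The correction term $\tfrac12\mathrm{Tr}(D^2F(u)G{\bf Q}G^*)$, whose size is governed by the polynomial growth of $f''$ of order $\widetilde q-3$, must be kept in $L_x^2$ via the Sobolev embeddings \eqref{emb}; it is exactly this estimate that forces the admissible range of $\widetilde q$ in Assumption~\ref{ap-f+}. Balancing these polynomial exponents against the available moments of $u$ is where the real care is required.
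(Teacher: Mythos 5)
Your overall architecture coincides with the paper's: the proof of Corollary \ref{cor-u-uhm} there likewise reduces everything to the refined bound $\sup_{m\in\zz_M}\|u(t_m)-\widetilde u_h^m\|_{L_\omega^p L_x^2}\le C(h^{1+\gamma}+\tau^{(1+\gamma)/2})$ for the auxiliary process \eqref{aux1}, treats the noise term exactly as you do (constant $G$, Burkholder--Davis--Gundy \eqref{bdg2} plus \eqref{eht2}, yielding \eqref{j3+} without Assumption \ref{ap-g2} for that term), and then recycles the monotonicity/Gr\"onwall argument of Theorem \ref{main1}; your observation that the error recursion for $\widetilde e_h^m$ becomes pathwise deterministic in the additive case is a correct simplification of that last step. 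The sole genuinely new ingredient is the upgrade of the drift term $J_{21}^{m+1}$ from $\tau^{1/2}$ to $\tau^{(1+\gamma)/2}$, and it is precisely there that your proposal has a gap.

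You expand $F(u(t_{i+1}))-F(u(r))$ by It\^o's formula applied to $s\mapsto F(u(s))$, which produces the generator term $f'(u)\Delta u$. Under the standing regularity this object is a product of an unbounded function with a genuine distribution: for $\gamma<1$ the solution lies only in $\dot H^{1+\gamma}$ (additive noise with $G\in\LL_2^1$ does not give $\dot H^2$), so $\Delta u\in\dot H^{\gamma-1}$. Your proposed repair --- dualize and treat $f'(u)$ as an $\dot H^1$-multiplier --- does not close this: integrating by parts in ${_{-1}}\<f'(u)\Delta u,\varphi\>_1$ produces the term $\int_\OOO f''(u)|\nabla u|^2\varphi\,{\rm d}x$, and with only $\nabla u\in L_x^2$ and $f''(u)$ of polynomial growth \eqref{con-f''} this is not even in $L_x^1$ when, say, $\gamma=0$ in $d=2,3$; no estimate from Theorem \ref{tm-h1} or Proposition \ref{prop-h1+} controls it. Relatedly, your claim that the Lebesgue part is $\OOO(\tau)$ (hence $J_{21}^{m+1}=\OOO(\tau)$) is too strong for $\gamma<1$: the generator term carries exactly the spatial roughness of $u$, and the best obtainable rate is $\tau^{(1+\gamma)/2}$ --- which is where the rate in \eqref{u-uhm0} actually originates. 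The paper avoids It\^o's formula on $F(u)$ altogether: it writes $u(t_{i+1})-u(r)$ via the mild formulation and applies the \emph{deterministic} second-order Taylor theorem to $F$ around $u(r)$, splitting $J_{21}^{m+1}$ into the four terms estimated in \eqref{j211}--\eqref{j214}. The critical term $DF(u(r))(S(t_{i+1}-r)-{\rm Id})u(r)$ is then an honest $L_x^1$ function, handled by the dual bound $\|x\|_{-\delta}\le C\|x\|_{L_x^1}$ with $\delta\in(3/2,2)$ together with the smoothing of $(-A)^{\delta/2}S(t_{m+1}-r)$, giving $\tau^{(1+\gamma)/2}$; in the remainder $R_F$ the second derivative $f''$ multiplies only zero-order increments $(u(t_{i+1})-u(r))^2$, measured in $L_x^3$ via $\dot H^{1/2}\hookrightarrow L_x^3$ and the H\"older estimate \eqref{hol-u1}, giving $\tau^{(1/2+\gamma)\wedge 1}$ --- the dangerous product $f''(u)|\nabla u|^2$ never appears. (Your martingale part, via stochastic Fubini and the $\OOO(\tau)$ kernel, parallels the paper's \eqref{j213} and is fine.) Note finally that even the paper's endpoint case $\gamma=1$ invokes the $\dot H^2$-moment bound \eqref{reg-h2} in \eqref{j211} and \eqref{j214}, so your blanket assertion that Assumption \ref{ap-g2} can be dropped at $\gamma=1$ is not substantiated by the sketch either.
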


\begin{proof}
Let $\gamma\in [0,1)$ and $\gamma=1$ when \eqref{con-g2} holds.
According to the proof in Theorem \ref{main1}, it suffices to show the following refined estimation for any $p \ge 2$ between the exact solution $u$ and the auxliary process $\widetilde{u}_h^m$ defined by \eqref{aux1}:
\begin{align} \label{u-uhm1+}
\sup_{m\in \zz_M} \|u(t_m)-\widetilde{u}_h^m\|_{L_\omega^p L_x^2}
\le C (h^{1+\gamma}+\tau^\frac{1+\gamma}2 ).
\end{align}
By means of the proof in Lemma \ref{lm-u-uhm}, we only need to give the following two refined estimations for $J^{m+1}_{21}$ and $J^{m+1}_3$ under the additional condition \eqref{con-f''}:
\begin{align} 
J^{m+1}_{21} 
&=\Big\|\sum_{i=0}^m \int_{t_i}^{t_{i+1}} S(t_{m+1}-r) 
[F(u(r))-F(u(t_{i+1}))] {\rm d}r \Big\|_{L_\omega^p L_x^2}
\le C \tau^\frac{1+\gamma}2, \label{j21} \\
J^{m+1}_3 
&=\Big\|\int_0^{t_{m+1}} 
E_{h,\tau}(t_{m+1}-r) G {\rm d}W(r) \Big\|_{L_\omega^p L_x^2}
\le C \Big(h^{1+\gamma}+\tau^\frac{1+\gamma}2 \Big).  \label{j3+} 
\end{align}

The last estimation \eqref{j3+} can be handled by the Burkholder--Davis--Gundy inequality \eqref{bdg2} and \eqref{eht2} with $\mu=\gamma$:
\begin{align*} 
J^{m+1}_3  
\le C \Big(\int_0^{t_{m+1}}  \|E_{h,\tau}(\sigma) G \|^2_{\LL_2^0} {\rm d}\sigma \Big)^\frac12  
\le C \|G \|_{\LL_2^1} \Big(h^{1+\gamma}+\tau^\frac{1+\gamma}2 \Big).
\end{align*}
To show the first estimation \eqref{j21}, note that 
\begin{align*} 
u(t_{i+1})
&=S(t_{i+1}-r) u(r)
+\int_r^{t_{i+1}} S(t_{i+1}-\sigma) F(u(\sigma)) {\rm d}\sigma  \\
&\quad +\int_r^{t_{i+1}} S(t_{i+1}-\sigma) G {\rm d}W(\sigma),
\quad r\in [t_i,t_{i+1}).
\end{align*}
Then using the Taylor formula leads to the splitting of $J^{m+1}_{21}$ into 
\begin{align*} 
J^{m+1}_{21} 
&\le \Big\|\sum_{i=0}^m \int_{t_i}^{t_{i+1}} S(t_{m+1}-r) 
DF(u(r)) (S(t_{i+1}-r)-{\rm Id}) u(r) {\rm d}r \Big\|_{L_\omega^p L_x^2} \\
&\quad +\Big\|\sum_{i=0}^m \int_{t_i}^{t_{i+1}} S(t_{m+1}-r) 
DF(u(r)) \Big[\int_r^{t_{i+1}} S(t_{i+1}-\sigma) F(u(\sigma)) {\rm d}\sigma\Big] {\rm d}r \Big\|_{L_\omega^p L_x^2} \\
&\quad +\Big\|\sum_{i=0}^m \int_{t_i}^{t_{i+1}} S(t_{m+1}-r) 
DF(u(r)) \Big[\int_r^{t_{i+1}} S(t_{i+1}-\sigma) G {\rm d}W(\sigma) \Big]  {\rm d}r \Big\|_{L_\omega^p L_x^2} \\
&\quad +\Big\|\sum_{i=0}^m \int_{t_i}^{t_{i+1}} S(t_{m+1}-r) 
R_F(u(r), u(t_{i+1})) {\rm d}r \Big\|_{L_\omega^p L_x^2}
=:\sum_{i=1}^4 J^{m+1}_{21i}, 
\end{align*}
where $R_F$ denotes the remainder term
\begin{align*} 
& R_F(u(r), u(t_{i+1})) \\
&:=\int_0^1 D^2 F \big(u(r)+\lambda (u(t_{i+1})-u(r)) \big)
 \big(u(t_{i+1})-u(r), u(t_{i+1})-u(r)\big) (1-\lambda) {\rm d}\lambda \\
&=\int_0^1 f'' \big(u(r)+\lambda (u(t_{i+1})-u(r)) \big)
 \big(u(t_{i+1})-u(r)\big)^2 (1-\lambda) {\rm d}\lambda.
\end{align*}
We shall estimate $J^{m+1}_{21i}$, $i=1,2,3,4$, successively.

Since for $\delta\in (3/2, 2)$, $\dot H^\delta\hookrightarrow \CC$, it follows by the dual argument that
\begin{align} \label{l1} 
\|x\|_{-\delta}\le C \|x\|_{L_x^1},
\quad x\in L_x^1.
\end{align}
This inequality, in conjunction with Minkovskii and Cauchy--Schwarz inequalities, the condition \eqref{con-f'}, the embedding \eqref{emb}, and the estimations \eqref{ana}, \eqref{reg-h1}, \eqref{reg-h1+}, and \eqref{reg-h2}, yields that 
\begin{align} \label{j211} 
J^{m+1}_{211} 
&\le \sum_{i=0}^m \int_{t_i}^{t_{i+1}} 
\|(-A)^\frac\delta2 S(t_{m+1}-r) \|_{\LL(H)}
\|DF(u(r)) (S(t_{i+1}-r)-{\rm Id}) u(r)\|_{L_\omega^p \dot H^{-\delta}} 
{\rm d}r \nonumber \\
&\le C \sum_{i=0}^m \int_{t_i}^{t_{i+1}} (t_{m+1}-r)^{-\frac\delta2}
\|DF(u(r)) (S(t_{i+1}-r)-{\rm Id}) u(r)\|_{L_\omega^p L_x^1}  {\rm d}r \nonumber \\
&\le C \sum_{i=0}^m \int_{t_i}^{t_{i+1}} (t_{m+1}-r)^{-\frac\delta2}
\|f'(u(r))\|_{L_\omega^{2p} L_x^2} 
\|(S(t_{i+1}-r)-{\rm Id}) u(r)\|_{L_\omega^{2p} L_x^2}  {\rm d}r \nonumber \\
&\le C \tau^\frac{1+\gamma}2 \|f'(u)\|_{L_t^\infty L_\omega^{2p} L_x^2} 
\|u\|_{L_t^\infty L_\omega^{2p} \dot H^{1+\gamma}} 
\Big(\int_0^{t_{m+1}} r^{-\frac\delta2}  {\rm d}r \Big) \nonumber \\
& \le C \tau^\frac{1+\gamma}2 \times
\begin{cases}
(1+\|u_0\|^{q-2}_1)(1+\|u_0\|_1), & \quad \gamma=0; \\
(1+\|u_0\|^{q-2}_1)(1+\|u_0\|^{q-1}_{1+\gamma}), & \quad \gamma\in (0,1); \\
(1+\|u_0\|^{q-2}_1)(1+\|u_0\|^{\rho (q-1)}_2+\|u_0\|^{(q-1)^2}_2), & \quad \gamma=1.
\end{cases}
\end{align}
Similar argument implies that 
\begin{align} \label{j212} 
J^{m+1}_{212} 
&\le C \sum_{i=0}^m \int_{t_i}^{t_{i+1}} \int_r^{t_{i+1}} 
(t_{m+1}-r)^{-\frac\delta2}
\|f'(u(r))\|_{L_\omega^{2p} L_x^2} 
\|f(u(\sigma))\|_{L_\omega^{2p} L_x^2} {\rm d}\sigma {\rm d}r \nonumber \\
&\le C \tau \|f'(u)\|_{L_t^\infty L_\omega^{2p} L_x^2} 
\|f(u)\|_{L_t^\infty L_\omega^{2p} L_x^2} 
\Big(\int_0^{t_{m+1}} r^{-\frac\delta2} {\rm d}r \Big)  \nonumber \\
& \le C \tau \big(1+\|u_0\|^{q-2}_1 \big) \big(1+\|u_0\|^{q-1}_1 \big).
\end{align}

To estimate the third term $J^{m+1}_{213}$, we apply stochastic Fubini theorem and the discrete and continuous Burkholder--Davis--Gundy inequalities \eqref{bdg-dis} and \eqref{bdg2}, respectively, to derive 
\begin{align*} 
J^{m+1}_{213} 
&=\Big\|\sum_{i=0}^m \int_{t_i}^{t_{i+1}} \Big[\int_{t_i}^{t_{i+1}} 
\chi_{[r,t_{i+1})}(\sigma) S(t_{m+1}-r) 
DF(u(r)) S(t_{i+1}-\sigma) G {\rm d}r\Big] {\rm d}W(\sigma) \Big\|_{L_\omega^p L_x^2} \\ 
&\le C \Big(\sum_{i=0}^m \int_{t_i}^{t_{i+1}} 
\Big\|\int_{t_i}^{t_{i+1}} S(t_{m+1}-r) 
DF(u(r)) S(t_{i+1}-\sigma) G {\rm d}r\Big\|^2_{L_\omega^p \LL_2^0} 
{\rm d}\sigma \Big)^\frac12. 
\end{align*}
Then by Cauchy--Schwarz inequality, the condition \eqref{con-f'}, and the estimation \eqref{reg-h1}, we obtain 
\begin{align} \label{j213} 
J^{m+1}_{213} 
&\le C \tau^\frac12 \Big(\sum_{i=0}^m \int_{t_i}^{t_{i+1}} 
\int_{t_i}^{t_{i+1}} \|S(t_{m+1}-r) DF(u(r)) S(t_{i+1}-\sigma) G\|_{L_\omega^p \LL_2^0}^2 {\rm d}r  
{\rm d}\sigma \Big)^\frac12  \nonumber \\
&\le C \tau^\frac12 \Big[\sum_{i=0}^m 
\Big(\int_{t_i}^{t_{i+1}} \|f'(u(r))\|^2_{L_\omega^p \dot H^{-1}}  {\rm d}r \Big)
\Big(\int_{t_i}^{t_{i+1}}  \|S(\sigma) G\|_{\LL_2^1}^2  {\rm d}\sigma \Big) \Big]^\frac12  \nonumber \\
&\le C \tau \|f'(u)\|_{L_t^\infty L_\omega^{2p} \dot H^{-1}} \|G\|_{\LL_2^1} 
\le C \big(1+\|u_0\|^{q-2}_1\big) \tau.
\end{align}
Finally, in terms of the estimations \eqref{ana} and \eqref{l1} and Minkovskii and H\"older inequalities, we estimate the last term $J^{m+1}_{214}$ similarly to $J^{m+1}_{211}$ by
\begin{align*} 
J^{m+1}_{214} 
&\le C \sum_{i=0}^m \int_{t_i}^{t_{i+1}} (t_{m+1}-r)^{-\frac\delta2}
\|R_F(u(r), u(t_{i+1}))\|_{L_\omega^p L_x^1} {\rm d}r \nonumber \\
&\le C \sum_{i=0}^m \int_{t_i}^{t_{i+1}} (t_{m+1}-r)^{-\frac\delta2}
\times \int_0^1 \big\|f''\big(u(r)+\lambda (u(t_{i+1})-u(r)) \big) \big\|_{L_\omega^{3p} L_x^3} \nonumber \\
&\qquad \qquad \qquad \big\|u(t_{i+1})-u(r) \big\|^2_{L_\omega^{3p} L_x^3} (1-\lambda) {\rm d}\lambda {\rm d}r.
\end{align*}
It follows from the condition \eqref{con-f''}, the embeddings \eqref{emb} and $\dot H^{1/2} \hookrightarrow L_x^3$, the moments' estimation \eqref{reg-h1}, and the H\"older estimation \eqref{hol-u1} that 
\begin{align} \label{j214} 
J^{m+1}_{214} 
&\le C \tau^{(\frac12+\gamma) \wedge 1} \Big(1+\|u\|^{\widetilde{q}-3}_{L_t^\infty L_\omega^{3p(\widetilde{q}-3)} L_x^{3(\widetilde{q}-3)}} \Big)
\Big(\int_0^{t_{m+1}} r^{-\frac\delta2} {\rm d}r\Big)  \nonumber \\
& \quad \times \Big[\sup_{0\le s < t \le T} \frac{\|u(t)-u(s) \|^2_{L_\omega^{3p} \dot H^{1/2}}} 
{|t-s|^{(1/2+\gamma) \wedge 1}} \Big] \nonumber \\
& \le C \tau^{(\frac12+\gamma) \wedge 1} \times
\begin{cases}
\big(1+\|u_0\|^{\widetilde{q}+q-4}_1 \big)  
\big(1+\|u_0\|^{q-1}_{1+\gamma} \big), & \quad \gamma\in [0,1); \\
\big(1+\|u_0\|^{\widetilde{q}+q-4}_1 \big) 
\big(1+\|u_0\|^{\rho (q-1)}_2+\|u_0\|^{(q-1)^2}_2\big), & \quad \gamma=1.
\end{cases}
\end{align}
Collecting the above four estimations \eqref{j211}-\eqref{j214} together, we obtain \eqref{j21} and complete the proof.
\end{proof}

\subsection{Strong Convergence Rate of DIEMG Scheme}
\label{sec5.2}

Next, we consider the DIEMG scheme \eqref{milstein} for Eq. \eqref{see} with general multiplicative noise under an infinite-dimensional analog of the commutativity type condition in the SODE setting.
Such commutativity condition was first introduced in \cite[Assumption 3]{JR15(FCM)} where the authors observed in \cite[Section 4]{JR15(FCM)} that a certain class of semilinear SPDEs with multiplicative trace class noise naturally fulfills such commutativity condition. 
The following assumption holds true automatically in the case of the additive noise case $G(u)\equiv G\in \LL_2^0$ for any $u\in H$.

\begin{ap} \label{ap-g+}
$G: H \rightarrow \LL_2^0$ be a twice continuously Fr\'echet differentiable mapping such that for every $z\in \dot H^1$, $DG(z)\in \LL(H; \LL_2^0)$ and $D^2G(z)\in \LL^{\otimes 2}(\dot H^\beta; \LL_2^0)$ for some $\beta\in [0, \frac{1+\gamma}2]$.
Moreover, there exists a constant $L'_g\in \rr_+$ such that 
\begin{align} 
\|DG(z)\|_{\LL(L_x^2; \LL_2^0)}
+\|D^2G(z)\|_{\LL^{\otimes 2}(\dot H^\beta; \LL_2^0)} \le L_g', 
& \quad z\in \dot H^1, \label{con-g12}  \\
\|DG(u) G(u)-DG(v) G(v)\|_{HS(U_0; \LL_2^0)}
\le L_g' \|u-v\|,
& \quad u,v\in H. \label{con-g-com}  
\end{align}
\end{ap}

\begin{ex} \label{ex-g}
Assume that $g:\rr\rightarrow \rr$ a twice continuously differentiable function such that   
\begin{align} \label{ex-g+}
\sup_{x\in \rr} |g'(x)|+\sup_{x\in \rr} |g''(x)|
+\sup_{x,y\in \rr, x\neq y} \frac{\|g'(x) g(x)-g'(y) g(y)\|}{|x-y|}<\infty,
\end{align}
$g_k\in W^{1,\infty}$ for each $k\in \nn_+$ such that  
\begin{align} \label{ex-gq}
\sum_{k\in \nn_+} \lambda^{\bf Q}_k \|g_k\|^2_{W^{1,\infty}}<\infty,
\end{align}
and that either $g(0)=0$ or $g_k$ vanishes on the boundary $\partial \OOO$, then Assumptions \ref{ap-g} and \ref{ap-g+} hold true.
In fact, for any $u,v\in L_x^2$ and $z\in \dot H^1$, we have
\begin{align*} 
\|G(u)-G(v)\|_{\LL_2^0} 
&=\Big(\sum_{k\in \nn_+} \lambda^{\bf Q}_k \|(g(u)-g(v)) g_k\|^2 \Big)^\frac12 \\
&\le \sup_{x\in \rr} |g'(x)| \Big(\sum_{k\in \nn_+} \lambda^{\bf Q}_k \|g_k\|_{L_x^\infty}^2\Big)^\frac12  \|u-v\|,
\end{align*}
and
\begin{align*}
\|G(z) \|^2_{\LL_2^1} 
&\le C \sum_{k\in \nn_+} \lambda^{\bf Q}_k 
\Big(\|g(z) g_k\|^2+\|g'(z) \nabla z g_k+g(z) g'_k\|^2 \Big) \\
&\le C \Big(1+\sup_{x\in \rr} |g'(x)|^2 \Big) \sum_{k\in \nn_+} \lambda^{\bf Q}_k 
\|g_k\|_{W_x^{1,\infty}}^2 (1+\|z\|_1)^2,
\end{align*}
which show Assumption \ref{ap-g}.
On the other hand, for any $z\in \dot H^1$,
\begin{align*}
&\|DG(z)\|_{\LL(L_x^2; \LL_2^0)}
+\|D^2G(z)\|_{\LL^{\otimes 2}(\dot H^\beta; \LL_2^0)} \\
&=\sup_{v\in L_x^2} \frac{\Big(\sum_{k\in \nn_+} \lambda^{\bf Q}_k
\|g'(z) v g_k\|^2 \Big)^\frac12}{\|v\|}
+\sup_{u,v\in \dot H^\beta} \frac{\Big(\sum_{k\in \nn_+} \lambda^{\bf Q}_k
\|(g''(z) u v g_k\|^2 \Big)^\frac12}{\|u\|_\beta \|v\|_\beta} \\
&\le C \Big(\sum_{k\in \nn_+} \lambda^{\bf Q}_k
\|g_k\|_{L_x^\infty}^2 \Big)^\frac12 
\Big[\sup_{x\in \rr} |g'(x)|+\sup_{x\in \rr} |g''(x)| \cdot
\sup_{u,v\in \dot H^\beta} \frac{\|u\|_{L_x^4} \|v\|_{L_x^4}}{\|v\|_\beta \|z\|_\beta} \Big].
\end{align*}
Due to the assumptions \eqref{ex-g+}-\eqref{ex-gq} and the embedding $\dot H^\beta\hookrightarrow L_x^4$ with $\beta=1/2$ when $d=1,2$ and 
$\beta=3/4$ when $d=3$, we have
\begin{align*}
&\|DG(z)\|_{\LL(L_x^2; \LL_2^0)}
+\|D^2G(z)\|_{\LL^{\otimes 2}(\dot H^\beta; \LL_2^0)} \\
&\le C  \sup_{v\in L_x^2} \Big(\sum_{k\in \nn_+} \lambda^{\bf Q}_k
\|g_k\|_{L_x^\infty}^2 \Big)^\frac12 \Big[\sup_{x\in \rr} |g'(x)|+\sup_{x\in \rr} |g''(x)| \Big].
\end{align*}
Moreover, for any $u,v\in L_x^2$,
\begin{align*}
&\|DG(u) G(u)-DG(v) G(v)\|_{HS(U_0; \LL_2^0)} \\
&=\Big(\sum_{k\in \nn_+} \sum_{l\in \nn_+} 
\lambda^{\bf Q}_k \lambda^{\bf Q}_l
\|DG(u) (G(u) g_k) g_l-DG(v) (G(v) g_l) g_k\|^2 \Big)^\frac12 \\
&\le \Big(\sum_{k\in \nn_+}  \lambda^{\bf Q}_k \|g_k\|_{L_x^\infty}^2 \Big) 
\|g'(u) g(u)-g'(v) g(v)\|
\le \Big(\sum_{k\in \nn_+}  \lambda^{\bf Q}_k \|g_k\|_{L_x^\infty}^2 \Big) 
\|u-v\|.
\end{align*}
The above two estimations show Assumption \ref{ap-g+} with $\gamma\in [0,1]$ and $\beta\in [0,1]$ when $d=1,2$ and $\gamma\in [1/2,1]$ and 
$\beta\in [0,(1+\gamma)/2]$ when $d=3$.

\end{ex}

We introduce a version $\widetilde{U}_h^{m+1}$ of the auxiliary process 
$\widetilde{u}_h^{m+1}$ given by \eqref{aux1} in consistent with the DIEMG scheme \eqref{full+}, which is defined as 
\begin{align}\label{aux2}
\widetilde{U}_h^{m+1}
&=S_{h,\tau}^{m+1} u_h^0
+\tau \sum_{i=0}^m S_{h,\tau}^{m+1-i} \PP_h F(u(t_{i+1}))
+\sum_{i=0}^m S_{h,\tau}^{m+1-i} \PP_h G(u(t_i)) \delta_i W
\nonumber \\
&\quad +\sum_{i=0}^m S_{h,\tau}^{m+1-i} 
\PP_h DG(u(t_i)) G(u(t_i)) 
\Big[\int_{t_i}^{t_{i+1}} (W(r)-W(t_i)) {\rm d}W(r) \Big],
\end{align}
for $m\in \zz_{M-1}$, where the terms $U_h^{i+1}$ and $U_h^i$ in the discrete deterministic and stochastic convolutions of \eqref{milstein-sum} are replaced by $u(t_{i+1})$ and $u(t_i)$, respectively.
The boundedness \eqref{reg-h1} and \eqref{shm} imply the solvability and uniform boundedness of $\widetilde{U}_h^m$:
\begin{align} \label{wide-Uh}
\sup_{m\in \zz_M} \ee\Big[ \|\widetilde{U}_h^m\|^p_1 \Big]
\le C \Big(1+\|u_0\|_1^{p(q-1)} \Big),
\end{align}
provided $u_0\in \dot H^1$ and Assumptions \ref{ap-f}-\ref{ap-g} and \ref{ap-g+} hold.
We begin with the strong error estimation between $u$ and $\{\widetilde{U}_h^m\}_{m\in \zz_M}$.

\begin{lm} \label{lm-u-Uhm}
Let $\gamma\in [0,1)$, $u_0\in \dot H^{1+\gamma}$, and Assumptions \ref{ap-f}-\ref{ap-g} and \ref{ap-f+}-\ref{ap-g+} hold. 
Then for any $p\ge 1$, there exists a constant $C$ such that
\begin{align} \label{u-Uhathm}
\sup_{m\in \zz_M} \|u(t_m)-\widetilde{U}_h^m\|_{L_\omega^p L_x^2}
\le C(h^{1+\gamma}+\tau^\frac{1+\gamma}2).
\end{align}
Assume furthermore that $u_0\in \dot H^2$ and Assumption \ref{ap-g2} holds, then \eqref{u-Uhathm} is valid with $\gamma=1$.
\end{lm}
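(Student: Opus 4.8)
The plan is to follow the architecture of the proof of Lemma~\ref{lm-u-uhm}: subtract the Milstein auxiliary process $\widetilde{U}_h^{m+1}$ of \eqref{aux2} from the mild formulation \eqref{mild} at $t=t_{m+1}$ and split
\[
\|u(t_{m+1})-\widetilde{U}_h^{m+1}\|_{L_\omega^p L_x^2}\le J_1^{m+1}+J_2^{m+1}+J_3^{m+1}
\]
into an initial, a drift, and a stochastic term. The initial term $J_1^{m+1}=\|E_{h,\tau}(t_{m+1})u_0\|_{L_\omega^p L_x^2}$ is bounded by \eqref{eht1} with $\mu=\nu=1+\gamma$, and the drift term $J_2^{m+1}$ is exactly the one treated in Corollary~\ref{cor-u-uhm}; invoking \eqref{j22} (or \eqref{j22+} when $\gamma=1$) together with the refined bound \eqref{j21}, valid under Assumption~\ref{ap-f+}, yields $J_1^{m+1}+J_2^{m+1}\le C(h^{1+\gamma}+\tau^{(1+\gamma)/2})$. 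Hence all the new work is concentrated in the stochastic term, which now carries the extra Milstein correction.

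Writing the discrete stochastic increment and the Milstein increment over each subinterval as a single It\^o integral, I would split $J_3^{m+1}\le J_{31}^{m+1}+J_{32}^{m+1}$, where the purely spatial error $J_{32}^{m+1}=\|\int_0^{t_{m+1}}E_{h,\tau}(t_{m+1}-r)G(u(r))\,{\rm d}W(r)\|_{L_\omega^p L_x^2}$ is controlled as in \eqref{j3} (and, for $\gamma=1$, in \eqref{j32} under Assumption~\ref{ap-g2}) by $C(h^{1+\gamma}+\tau^{(1+\gamma)/2})$, while
\[
J_{31}^{m+1}=\Big\|\sum_{i=0}^m\int_{t_i}^{t_{i+1}}S_{h,\tau}^{m+1-i}\PP_h R_i(r)\,{\rm d}W(r)\Big\|_{L_\omega^p L_x^2},\quad R_i(r)=G(u(r))-G(u(t_i))-DG(u(t_i))G(u(t_i))(W(r)-W(t_i)),
\]
is the genuine Milstein remainder. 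The crux is to prove $J_{31}^{m+1}\le C\tau^{(1+\gamma)/2}$.

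The key step is an It\^o--Taylor expansion of $R_i(r)$. Expanding $G$ by the second-order Taylor formula at $u(t_i)$ and substituting the mild representation $u(r)-u(t_i)=(S(r-t_i)-{\rm Id})u(t_i)+\int_{t_i}^r S(r-s)F(u(s))\,{\rm d}s+\int_{t_i}^r S(r-s)G(u(s))\,{\rm d}W(s)$ (which retains the semigroup smoothing and, crucially, avoids the ill-defined object $Au(s)$ when $\gamma<1$), the Milstein term cancels against the leading part of $DG(u(t_i))(u(r)-u(t_i))$, leaving $R_i(r)=\sum_{j=1}^4 R_i^{(j)}(r)$ with
\begin{align*}
R_i^{(1)}(r)&=DG(u(t_i))(S(r-t_i)-{\rm Id})u(t_i),\qquad
R_i^{(2)}(r)=DG(u(t_i))\int_{t_i}^r S(r-s)F(u(s))\,{\rm d}s,\\
R_i^{(3)}(r)&=DG(u(t_i))\int_{t_i}^r\big(S(r-s)G(u(s))-G(u(t_i))\big)\,{\rm d}W(s),
\end{align*}
and $R_i^{(4)}(r)$ the quadratic Taylor remainder of $G$. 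Using the discrete and continuous Burkholder--Davis--Gundy inequalities \eqref{bdg-dis} and \eqref{bdg2}, the stability \eqref{shm}, the smoothing property \eqref{ana}, the operator bounds \eqref{con-g12}, the conditions \eqref{con-g}--\eqref{con-g1}, and the H\"older regularity \eqref{hol-u1}, each contribution reduces to a sum of the form $(\sum_i\int_{t_i}^{t_{i+1}}(r-t_i)^{2a}\,{\rm d}r)^{1/2}\lesssim\tau^a$. The smoothing estimate gives $a=(1+\gamma)/2$ for $R_i^{(1)}$, which is the sharp, rate-determining term; $R_i^{(2)}$ and $R_i^{(3)}$ give $a=1$, the latter after splitting $S(r-s)G(u(s))-G(u(t_i))$ into $(S(r-s)-{\rm Id})G(u(s))$ and $G(u(s))-G(u(t_i))$ and using \eqref{hol-u}; and for $R_i^{(4)}$, bounding $\|R_i^{(4)}(r)\|_{\LL_2^0}\le C\|u(r)-u(t_i)\|_\beta^2$ via \eqref{con-g12} and invoking \eqref{hol-u1} in $\dot H^\beta$ yields $a=(1+\gamma-\beta)\wedge 1\ge(1+\gamma)/2$ precisely because $\beta\le(1+\gamma)/2$ in Assumption~\ref{ap-g+}.

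I expect the main obstacle to lie in the quadratic remainder $R_i^{(4)}$ and the iterated (double) stochastic integral inside $R_i^{(3)}$: the latter needs the stochastic Fubini theorem so that each of the two ${\rm d}W$-integrations contributes a factor $\tau^{1/2}$, while the former is the only place where the precise range $\beta\in[0,(1+\gamma)/2]$ of Assumption~\ref{ap-g+} is essential, and it is exactly what separates the present multiplicative case from the additive one of Corollary~\ref{cor-u-uhm}. Collecting $J_1^{m+1}+J_2^{m+1}+J_{31}^{m+1}+J_{32}^{m+1}$ with the trivial bound $J^0=\|u_0-\PP_h u_0\|_{L_\omega^p L_x^2}\le Ch^{1+\gamma}\|u_0\|_{1+\gamma}$ settles the case $\gamma\in[0,1)$; the case $\gamma=1$ follows from the same decomposition, with the divergent time integrals replaced by the refined estimates relying on Assumption~\ref{ap-g2} and the regularity \eqref{reg-h2}, exactly as in the passage from \eqref{j22} to \eqref{j22+} in Lemma~\ref{lm-u-uhm}. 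The commutativity Lipschitz bound \eqref{con-g-com} is not required here; it enters instead the subsequent reduction of $\widetilde{U}_h^m$ to $U_h^m$ in the proof of Theorem~\ref{main2}.
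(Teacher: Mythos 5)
Your proposal is correct and follows essentially the same route as the paper: the same splitting into $J_1^{m+1}+J_2^{m+1}$ (handled by \eqref{j1}, \eqref{j22}/\eqref{j22+}, and \eqref{j21}) plus the stochastic part, the same separation of the Milstein remainder from the $E_{h,\tau}$-error, the same second-order Taylor expansion of $G$ combined with the mild representation of $u(r)-u(t_i)$, and the same use of $\beta\le(1+\gamma)/2$ from Assumption \ref{ap-g+} together with the $\dot H^\beta$-H\"older regularity \eqref{hol-u1} to make the quadratic remainder close at rate $\tau^{(1+\gamma)/2}$. The only minor deviation is your appeal to the stochastic Fubini theorem for the term $R_i^{(3)}$: the paper instead nests the Burkholder--Davis--Gundy inequalities \eqref{bdg-dis}--\eqref{bdg2} (outer) and It\^o isometry/\eqref{bdg2} (inner), which suffices and is slightly simpler, but your variant would work as well.
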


\begin{proof}
For $m\in \zz_{M-1}$, define $\widetilde{J}^{m+1}:=\|u(t_{m+1})-\widetilde{U}_h^{m+1}\|_{L_\omega^p L_x^2}$.
By the proofs of Lemma \ref{lm-u-uhm} and Corollary \ref{cor-u-uhm}, we have
\begin{align*} 
\widetilde{J}^{m+1}\le \sum_{i=1}^2 J^{m+1}_i+\widetilde{J}^{m+1}_3,
\end{align*}
where $J^{m+1}_i$, $i=1,2$, are defined from \eqref{j} and satisfy the estimate
\begin{align} \label{j1+2} 
J^{m+1}_1+J^{m+1}_2 
&\le C \big(h^{1+\gamma}+\tau^\frac{1+\gamma}2 \big),
\end{align}
by \eqref{j1}, \eqref{j22}, \eqref{j22+}, and \eqref{j21}, and the last term is defined as
\begin{align*} 
\widetilde{J}^{m+1}_3 
&:=\Big\|\int_0^{t_{m+1}} S(t_{m+1}-r) G(u(r)) {\rm d}W(r) 
-\sum_{i=0}^m S_{h,\tau}^{m+1-i} \PP_h G(u(t_i)) \delta_i W 
\nonumber \\
&\quad -\sum_{i=0}^m S_{h,\tau}^{m+1-i} 
\PP_h DG(u(t_i)) G(u(t_i)) 
\Big[\int_{t_i}^{t_{i+1}} (W(r)-W(t_i)) {\rm d}W(r) \Big]\Big\|_{L_\omega^p L_x^2}.
\end{align*}
Thus to prove \eqref{u-Uhathm}, it suffices to show that 
\begin{align}  \label{j3hat}
\widetilde{J}^{m+1}_3 
&\le C \big(h^{1+\gamma}+\tau^\frac{1+\gamma}2 \big).
\end{align}

The term $\widetilde{J}^{m+1}_3$ can be decomposed as 
\begin{align*} 
\widetilde{J}^{m+1}_3 
&\le \Big\|\sum_{i=0}^m \int_{t_i}^{t_{i+1}} 
H^{m+1}_i(r) {\rm d}W(r) \Big\|_{L_\omega^p L_x^2} \\
&\quad +\Big\|\int_0^{t_{m+1}} E_{h,\tau}(t_{m+1}-r) G(u(r)) {\rm d}W(r) \Big\|_{L_\omega^p L_x^2} 
:=\widetilde{J}^{m+1}_{31}+J^{m+1}_{32},
\end{align*}
where
\begin{align*} 
H^{m+1}_i(r)
=S_{h,\tau}^{m+1-i} \PP_h 
[G(u(r))-G(u(t_i))-DG(u(t_i)) G(u(t_i)) (W(r)-W(t_i))].
\end{align*}
By \eqref{j3} and \eqref{j32}, the second term $J^{m+1}_{32}$ has the estimation
\begin{align} \label{j32+} 
J^{m+1}_{32} 
\le C \Big(h^{1+\gamma}+\tau^\frac{1+\gamma}2 \Big) \times 
\begin{cases}
\big(1+\|u_0\|_1^{q-1} \big),
&\quad \gamma\in [0,1), \\
\big(1+\|u_0\|^{q-1}_2 \big),
&\quad \gamma=1.
\end{cases}
\end{align}
It remains to show a similar estimation for 
$\widetilde{J}^{m+1}_{31}$.
It is clear that 
\begin{align*} 
H^{m+1}_i(r)
&=S_{h,\tau}^{m+1-i} \PP_h DG(u(t_i)) [u(r)-u(t_i)-G(u(t_i) (W(r)-W(t_i))]  \\
&\quad +S_{h,\tau}^{m+1-i} \PP_h \int_0^1 D^2 G\big(u(t_i)+\lambda (u(r)-u(t_i)) \big) \\
&\qquad \qquad \big(u(r)-u(t_i), u(r)-u(t_i) \big) (1-\lambda) {\rm d}\lambda,
\quad r\in [t_1,t_{i+1}).
\end{align*}
As a consequence of the discrete and continuous Burkholder--Davis--Gundy inequalities \eqref{bdg-dis} and \eqref{bdg2}, respectively, we have
\begin{align} 
\widetilde{J}^{m+1}_{31} 
&\le \sum_{i=1}^2 \sqrt{\widetilde{J}^{m+1}_{31i}}, 
\end{align}
where
\begin{align*} 
\widetilde{J}^{m+1}_{311}
&:=\sum_{i=0}^m \int_{t_i}^{t_{i+1}} 
\Big\|S_{h,\tau}^{m+1-i} \PP_h DG(u(t_i)) 
\Big[u(r)-u(t_i)-\int_{t_i}^r G(u(t_i)) {\rm d}W(s) \Big] \Big\|^2_{L_\omega^p \LL_2^0} {\rm d}r, \\
\widetilde{J}^{m+1}_{312}
&:=\sum_{i=0}^m \int_{t_i}^{t_{i+1}} 
\Big\|S_{h,\tau}^{m+1-i} \PP_h \int_0^1 D^2 G\big(u(t_i)+\lambda (u(r)-u(t_i)) \big) \\
&\qquad \qquad  \qquad \big(u(r)-u(t_i), u(r)-u(t_i) \big) (1-\lambda) {\rm d}\lambda  \Big\|^2_{L_\omega^p \LL_2^0} {\rm d}r.
\end{align*}

We begin with the first term $\widetilde{J}^{m+1}_{311}$.
By the boundedness \eqref{ana}, the condition \eqref{con-g12}, the mild formulation \eqref{mild}, and H\"older inequality, we have
\begin{align*}
\widetilde{J}^{m+1}_{311}
&\le C \sum_{i=0}^m \int_{t_i}^{t_{i+1}} 
\|u(r)-u(t_i)-G(u(t_i) (W(r)-W(t_i)) \|^2_{L_\omega^p L_x^2} {\rm d}r \\
&\le C \sum_{i=0}^m \int_{t_i}^{t_{i+1}} 
\|(S(r-t_i)-{\rm Id}) u(t_i)\|^2_{L_\omega^p L_x^2} {\rm d}r  \\
&\quad +C \sum_{i=0}^m \int_{t_i}^{t_{i+1}} \Big\|\int_{t_i}^r S(r-t_i) F(u(s) {\rm d}s \Big\|^2_{L_\omega^p L_x^2} {\rm d}r  \\
&\quad +C \sum_{i=0}^m \int_{t_i}^{t_{i+1}} \Big\|\int_{t_i}^r (S(r-s)-{\rm Id}) G(u(s)) {\rm d}W(s) \Big\|^2_{L_\omega^p L_x^2} {\rm d}r    \\
&\quad +C \sum_{i=0}^m \int_{t_i}^{t_{i+1}} \Big\|\int_{t_i}^r 
[G(u(s))-G(u(t_i))] {\rm d}W(s) \Big\|^2_{L_\omega^p L_x^2}  {\rm d}r.
\end{align*}
The conditions \eqref{con-f1} and \eqref{con-g}, the embedding \eqref{emb}, and the estimations \eqref{ana}, \eqref{reg-h1}, \eqref{hol-u}, \eqref{reg-h1+}, and \eqref{reg-h2} imply that 
\begin{align} \label{j311}
\widetilde{J}^{m+1}_{311}    
&\le C \tau^{1+\gamma} \|u\|^2_{L_t^\infty L_\omega^p \dot H^{1+\gamma}}
+C \tau^2 \Big(1+\|u\|^{q-1}_{L_t^\infty L_\omega^{p(q-1)} \dot H^1}\Big)^2 \nonumber  \\
&\quad +C \tau^2 \Big(1+\|u\|_{L_t^\infty L_\omega^p \LL_2^1} \Big)^2
+C \tau^2 \sup_{0\le s < t \le T} \frac{\|u(t)-u(s)\|^2_{L_\omega^p L_x^2}}{(t-s)}
\nonumber \\
&\le C \tau^{1+\gamma} \times 
\begin{cases}
\big(1+\|u_0\|_1^{q-1} \big)^2,
&\quad \gamma\in [0,1), \\
\big(1+\|u_0\|^{\rho (q-1)}_2+\|u_0\|^{(q-1)^2}_2 \big)^2,
&\quad \gamma=1.
\end{cases}
\end{align}
Next, we estimate the second term $\widetilde{J}^{m+1}_{312}$ by the boundedness \eqref{shm}, \eqref{con-g12} with $\beta\in [0, \frac{1+\gamma}2)$, Cauchy--Schwarz inequality, and the H\"older estimation \eqref{hol-u}:
\begin{align}  \label{j313} 
\widetilde{J}^{m+1}_{312}
&\le C \sup_{z\in \dot H^1} \|D^2G(z)\|^2_{\LL^{\otimes 2}(\dot H^\beta; \LL_2^0)}
\Big[\sum_{i=0}^m \int_{t_i}^{t_{i+1}} 
\big\|u(r)-u(t_i) \big\|^4_{L_\omega^{2p} \dot H^\beta} {\rm d}r \Big]  \nonumber \\
& \le C \sup_{0\le s < t \le T} 
\frac{\|u(t)-u(s)\|^4_{L_\omega^{2p} L_x^2}} {|t-s|^{2(1+\gamma-\beta)\wedge 2}}
\Big[\sum_{i=0}^m \int_{t_i}^{t_{i+1}} (r-t_i)^{2(1+\gamma-\beta)\wedge 2} {\rm d}r \Big]  \nonumber \\
& \le C \tau^{2(1+\gamma-\beta)\wedge 2} (1+\|u_0\|^{q-1}_1)^4
\le C \tau^{1+\gamma} (1+\|u_0\|^{q-1}_1 )^4.
\end{align}

Collecting the estimations \eqref{j311}-\eqref{j313}, we derive 
\begin{align} \label{j31+} 
\widetilde{J}^{m+1}_{31} \le C \tau^\frac{1+\gamma}2.
\end{align}
The above estimation \eqref{j31+}, in combination with \eqref{j32+}, shows \eqref{j3hat} and thus completes the proof.
\end{proof}

\begin{rk} 
Under the conditions of Lemma \ref{lm-u-Uhm} with $\gamma\in [0,1]$ and 
$\beta\in [0,1]$, the proof of Lemma \ref{lm-u-Uhm} yields that, for any $p\ge 1$, there exists a constant $C$ such that
\begin{align*}
\sup_{m\in \zz_M} \|u(t_m)-\widetilde{U}_h^m\|_{L_\omega^p L_x^2}
\le C (h^{1+\gamma}+\tau^{\frac{1+\gamma}2\wedge (1+\gamma-\beta)} ).
\end{align*}
\end{rk}

Now we can prove Theorem \ref{main2} by combining Lemma \ref{lm-u-Uhm} with a variational approach as in the proof of Theorem \ref{main1}.

\begin{proof} [Proof of Theorem \ref{main2}]
Let $m\in \zz_{M-1}$.
In terms of Minkovskii inequality and the estimation \eqref{u-Uhathm}, it suffices to prove that 
\begin{align}  \label{u-Uhm2}
\sup_{m\in \zz_M} \|\widetilde{U}_h^m-U_h^m\|_{L_\omega^p L_x^2}
\le C \big(h^{1+\gamma}+\tau^\frac{1+\gamma}2 \big),
\quad \gamma\in [0,1].
\end{align}

Define $\widehat{e}_h^{m+1}:=\widetilde{U}_h^{m+1}-U_h^{m+1}$.
Then $\widehat{e}_h^{m+1}\in V_h$
with vanishing initial datum $\widehat{e}_h^0=0$.
In terms of Eq. \eqref{milstein} and \eqref{aux2}, it is clear that 
\begin{align*}
\widetilde{U}_h^{m+1}
&=\widetilde{U}_h^m
+\tau A_h \widetilde{U}_h^{m+1}
+\tau \PP_h F(u(t_{m+1}))
+\PP_h G(u(t_m)) \delta_m W
\nonumber \\
&\quad +\PP_h DG(u(t_m)) G(u(t_m)) 
\Big[\int_{t_m}^{t_{m+1}} (W(r)-W(t_m)) {\rm d}W(r) \Big], \\
U_h^{m+1}
&=U_h^m+\tau A_h U_h^{m+1}
+\tau \PP_h F(U_h^{m+1})
+\PP_h G(U_h^m) \delta_m W \nonumber \\
&\quad +\PP_h DG(U_h^m) G(U_h^m) 
\Big[\int_{t_m}^{t_{m+1}} (W(r)-W(t_m)) {\rm d}W(r) \Big].
\end{align*}
Consequently, 
\begin{align*}
 \widehat{e}_h^{m+1}-\widehat{e}_h^m
&=\tau A_h \widehat{e}_h^{m+1}
+\tau \PP_h (F(u(t_{m+1}))-F(U_h^{m+1})) \nonumber \\
&\quad +\PP_h (G(u(t_m))-G(U_h^m))\delta_m W
+\int_{t_m}^{t_{m+1}} \PP_h H^{m+1} {\rm d}W(r),
\end{align*}
where 
\begin{align*}
H^{m+1}(r):=\int_{t_m}^r \Big[DG(u(t_m)) G(u(t_m))-DG(U_h^m) G(U_h^m)\Big] {\rm d}W(s),
\quad r\in [t_m, t_{m+1}).
\end{align*}
Multiplying $\widehat{e}_h^{m+1}$ on both sides of the above equation as in \eqref{eq-u-uhm}, we obtain
\begin{align} \label{eq-u-Uhm}
& \<\widehat{e}_h^{m+1}-\widehat{e}_h^m, \widehat{e}_h^{m+1}\>
+\tau \|\nabla \widehat{e}_h^{m+1}\|^2
=\tau ~ {_{-1}}\<F(u(t_{m+1}))-F(U_h^{m+1}), \widehat{e}_h^{m+1}\>_1 \nonumber \\
&\quad +\< (G(u(t_m))-G(U_h^m))\delta_m W, \widehat{e}_h^{m+1}\>
+\int_{t_m}^{t_{m+1}} \<H^{m+1} {\rm d}W(r), \widehat{e}_h^{m+1}\>.
\end{align}

Similarly to \eqref{k0}, we have
\begin{align}\label{k0+}
\ee\Big[\<\widehat{e}_h^{m+1}-\widehat{e}_h^m, \widehat{e}_h^{m+1}\> \Big]
&= \frac12 \Big(\ee\Big[\|\widehat{e}_h^{m+1}\|^2\Big]
-\ee\Big[\|\widehat{e}_h^m\|^2\Big]\Big)
+\frac12 \ee\Big[\|\widehat{e}_h^{m+1}-\widehat{e}_h^m\|^2\Big].
\end{align}
By analogous arguments in the proof of Theorem \ref{main1}, we have 
\begin{align} \label{k1+}
& \tau ~ {_{-1}}\<F(u(t_{m+1}))-F(U_h^{m+1}), \widehat{e}_h^{m+1}\>_1
\le \frac12\tau \|\nabla \widehat{e}_h^{m+1}\|^2 
+L_f \tau \|\widehat{e}_h^{m+1}\|^2 \nonumber \\
&\quad +C \tau \|u(t_{m+1})-\widetilde{U}_h^{m+1}\|^2 
\big(1+\|u(t_{m+1})\|^{q-2}_1 +\|\widetilde{U}_h^{m+1}\|^{q-2}_1 \big),
\end{align}
and 
\begin{align} \label{k2+}
& \< (G(u(t_m))-G(U_h^m))\delta_m W, \widehat{e}_h^{m+1}\> \nonumber \\
&\le \frac14  \ee\Big[\|\widetilde{e}_h^{m+1}-\widetilde{e}_h^m\|^2\Big]
+2L_g^2 \ee\Big[\|u(t_m)-\widetilde{U}_h^m\|^2 \Big] \tau
+2L_g^2 \ee\Big[\|\widetilde{e}_h^m\|^2 \Big] \tau.
\end{align}

To deal with the last stochastic term in \eqref{eq-u-Uhm}, we use the martingale property of the stochastic integral, Cauchy--Schwarz inequality, and It\^o isometry as in \eqref{j311}:
\begin{align*}
\ee\Big[ \int_{t_m}^{t_{m+1}} \< H^{m+1} {\rm d}W(r), \widehat{e}_h^{m+1}\> \Big]  
\le \frac14 \ee\Big[\|\widetilde{e}_h^{m+1}-\widetilde{e}_h^m\|^2\Big]
+C \int_{t_m}^{t_{m+1}} 
\ee\Big[\|H^{m+1}\|^2_{\LL_2^0} \Big] {\rm d}r.
\end{align*}
By It\^o isometry, the condition \eqref{con-g-com}, and Cauchy--Schwarz inequality, we get 
\begin{align*}
& \int_{t_m}^{t_{m+1}} 
\ee\Big[\|H^{m+1} \|^2_{\LL_2^0} \Big] {\rm d}r \\
&= \int_{t_m}^{t_{m+1}} \ee\Big[ \Big\|\int_{t_m}^r \Big[DG(U_h^m) G(U_h^m)-DG(u(t_m)) G(u(t_m)) \Big] {\rm d}W(s)\Big \|^2_{\LL_2^0} \Big] {\rm d}r \\ 
&= \frac12 \tau^2 \ee\Big[ \|DG(U_h^m) G(U_h^m)-DG(u(t_m)) G(u(t_m))\|^2_{HS(U_0; \LL_2^0)} \Big] \\
&= \frac{(L'_g)^2}2 \tau^2 \ee\Big[ \|u(t_m)-U_h^m\|^2  \Big]
\le L_g'^2 \tau^2 \ee\Big[ \|u(t_m)-\widetilde{U}_h^m\|^2 \Big]
+L_g'^2 \tau^2 \ee\Big[ \|\widetilde{e}_h^m\|^2 \Big],
\end{align*}
and thus 
\begin{align} \label{k3+}
& \ee\Big[ \int_{t_m}^{t_{m+1}} \< H^{m+1} {\rm d}W(r), \widehat{e}_h^{m+1}\> \Big] \nonumber  \\
& \le \frac14 \ee\Big[\|\widetilde{e}_h^{m+1}-\widetilde{e}_h^m\|^2\Big]
+L_g'^2 \tau^2 \ee\Big[ \|u(t_m)-\widetilde{U}_h^m\|^2 \Big]
+L_g'^2 \tau^2 \ee\Big[ \|\widetilde{e}_h^m\|^2 \Big].
\end{align}
Now taking expectation in \eqref{eq-u-Uhm}, we use \eqref{k0+}-\eqref{k3+} and H\"older inequality to get 
\begin{align*} 
& \frac12 \Big(\ee\Big[\|\widehat{e}_h^{m+1}\|^2\Big]
-\ee\Big[\|\widehat{e}_h^m\|^2\Big]\Big) \nonumber \\
&\le L_f \ee\Big[\|\widehat{e}_h^{m+1}\|^2\Big] \tau
+C \Big(\ee\Big[\|u(t_{m+1})-\widetilde{U}_h^{m+1}\|^4\Big]\Big)^\frac12 \tau
+L_g'^2 \tau^2 \ee\Big[ \|u(t_m)-\widetilde{U}_h^m\|^2 \Big] \nonumber \\
&\quad \times \Big(1+\Big(\ee\Big[\|u(t_{m+1})\|_1^{2(q-2)}\Big] \Big)^\frac12
+\ee\Big[\|\widetilde{U}_h^{m+1}\|_1^{2(q-2)}\Big] \Big)^\frac12 \Big)  \nonumber \\
&\quad +2L_g^2 \ee\Big[\|u(t_m)-\widetilde{U}_h^m\|^2 \Big] \tau
+\Big( 2L_g^2 \tau+L_g'^2 \tau^2 \Big) \ee\Big[\|\widehat{e}_h^m\|^2 \Big] .
\end{align*}
Then by the estimations \eqref{reg-h1}, \eqref{wide-Uh}, and \eqref{u-Uhathm}, we obtain
\begin{align*}
\big(1-2L_f \tau \big) \ee\Big[\|\widehat{e}_h^{m+1}\|^2\Big]
\le C \tau \big(h^{1+\gamma}+\tau^\frac{1+\gamma}2\big)^2
+\big(1+C \tau \big) \ee\Big[\|\widehat{e}_h^m\|^2\Big].
\end{align*}
We conclude \eqref{u-Uhm2} by the same arguments as in the proof of Theorem \ref{main1}.
\end{proof}

\subsection{Milstein--Spectral Galerkin Scheme}
\label{sec5.3}

Similarly to Section \ref{sec4.3}, the parallel result in Theorem \ref{main2} for DIEMG scheme \eqref{milstein} is also valid for the spectral Galerkin-based Milstein scheme of Eq. \eqref{see}.

The DIE Milstein spectral Galerkin (DIEMSG) scheme of Eq. \eqref{see} is to find a $V_h$-valued discrete process $\{U_N^m:\ N\in \nn_+; m\in \zz_M\}$ such that 
 \begin{align}\label{milstein-n} \tag{DIEMSG}
\begin{split}
U_N^{m+1}
&=U_N^m+\tau A_N U_N^{m+1}
+\tau \PP_N F(U_N^{m+1})
+\PP_N G(U_N^m) \delta_m W \nonumber \\
&\quad +\PP_N DG(U_N^m) G(U_N^m) 
\Big[\int_{t_m}^{t_{m+1}} (W(r)-W(t_m)) {\rm d}W(r) \Big],
\end{split}
\end{align}
with initial datum $U_N^0=\PP_N u_0$. 
By the arguments in the proof of Theorem \ref{main2}, one can show the following strong convergence results for this scheme with sharp rates.

\begin{tm}   \label{u-UNm}
Let $\gamma\in [0,1)$, $u_0\in \dot H^{1+\gamma}$, and Assumptions \ref{ap-f}-\ref{ap-g} and \ref{ap-f+}-\ref{ap-g+} hold.
There exist a constant $C$ such that 
\begin{align} \label{u-Unm}
\sup_{m\in \zz_M} \|u(t_m)-U_N^m\|_{L_\omega^p L_x^2}
\le C(N^{-\frac{1+\gamma} d}+\tau^\frac{1+\gamma}2).
\end{align}
Assume furthermore that $u_0\in \dot H^2$ and Assumption \ref{ap-g2} holds, then \eqref{u-Unm} is valid with $\gamma=1$.
\end{tm}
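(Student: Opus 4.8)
The plan is to follow the proof of Theorem \ref{main2} verbatim, replacing the finite element data $(A_h,\PP_h,S_{h,\tau})$ throughout by the spectral data $(A_N,\PP_N,S_{N,\tau})$ with $S_{N,\tau}:=({\rm Id}-\tau A_N)^{-1}$, and replacing the spatial rate $h^{1+\gamma}$ by $N^{-(1+\gamma)/d}$. First I would introduce the spectral auxiliary process $\widetilde U_N^{m+1}$, defined exactly as $\widetilde U_h^{m+1}$ in \eqref{aux2} but with $S_{h,\tau}$ and $\PP_h$ replaced by $S_{N,\tau}$ and $\PP_N$, and split the error by Minkovskii inequality as
\begin{align*}
\|u(t_m)-U_N^m\|_{L_\omega^p L_x^2}
\le \|u(t_m)-\widetilde U_N^m\|_{L_\omega^p L_x^2}
+\|\widetilde U_N^m-U_N^m\|_{L_\omega^p L_x^2}.
\end{align*}
It then suffices to prove the spectral analogues of Lemma \ref{lm-u-Uhm} (for the first summand) and of \eqref{u-Uhm2} (for the second).

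The only genuinely new ingredient is the spectral counterpart of the error bounds in Lemma \ref{lm-eht} for $E_{N,\tau}(t):=S(t)-S_{N,\tau}^{m+1}\PP_N$ with $t\in(t_m,t_{m+1}]$. Since $\PP_N$ is the $H$-orthogonal projection onto $V_N$ and commutes with $(-A)^\theta$, the operator $E_{N,\tau}(t)$ is diagonal in the eigenbasis $\{e_k\}$: on the mode $e_k$ it acts by $e^{-\lambda_k t}-(1+\tau\lambda_k)^{-(m+1)}$ for $k\le N$ and by $e^{-\lambda_k t}$ for $k>N$. The high-frequency tail $k>N$ is controlled using the Weyl asymptotics $\lambda_k\simeq k^{2/d}$, so that $\lambda_{N+1}^{-\mu/2}\simeq N^{-\mu/d}$, while the low-frequency time-discretization error $|e^{-\lambda_k t}-(1+\tau\lambda_k)^{-(m+1)}|$ is controlled by $\tau^{\mu/2}$ exactly as in the scalar rational-approximation estimates. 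This yields, for $0\le\nu\le\mu\le 2$ and $x\in\dot H^\nu$,
\begin{align*}
\|E_{N,\tau}(t)x\|\le C\big(N^{-\mu/d}+\tau^{\mu/2}\big)\,t^{-\frac{\mu-\nu}2}\|x\|_\nu,
\end{align*}
together with the square-integrated analogue of \eqref{eht2} in which $h^{1+\mu}$ is replaced by $N^{-(1+\mu)/d}$. With these in hand, the proofs of Lemma \ref{lm-u-Uhm} and Corollary \ref{cor-u-uhm} transfer without change, since they use only the smoothing bounds \eqref{ana}, the uniform stability $\|S_{N,\tau}^m\PP_N x\|\le\|x\|$ (immediate from $(1+\tau\lambda_k)^{-m}\le 1$), the Burkholder--Davis--Gundy inequalities \eqref{bdg-dis} and \eqref{bdg2}, the regularity estimates of Section \ref{sec3}, and the displayed bounds for $E_{N,\tau}$.

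Finally, the variational estimate for $\|\widetilde U_N^m-U_N^m\|_{L_\omega^p L_x^2}$ goes through unchanged. Setting $\widehat e_N^{m+1}:=\widetilde U_N^{m+1}-U_N^{m+1}\in V_N$ and testing the difference equation against $\widehat e_N^{m+1}$ reproduces \eqref{eq-u-Uhm}; the monotone condition \eqref{F-mon}, the Lipschitz bound \eqref{con-g}, the commutativity bound \eqref{con-g-com}, and the It\^o isometry estimates \eqref{k0+}--\eqref{k3+} are all spatial-discretization-independent and lead to the same one-step recursion $(1-2L_f\tau)\,\ee\big[\|\widehat e_N^{m+1}\|^2\big]\le C\tau\big(N^{-(1+\gamma)/d}+\tau^{(1+\gamma)/2}\big)^2+(1+C\tau)\,\ee\big[\|\widehat e_N^m\|^2\big]$, whence the discrete Gr\"onwall inequality closes the argument. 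The main point requiring care is thus the spectral error analysis of the previous paragraph; this is in fact cleaner than its finite element counterpart, because the orthogonality of $\PP_N$ makes $E_{N,\tau}$ exactly diagonal, so no Ritz-projection or nonconforming estimates are needed.
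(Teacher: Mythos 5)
Your proposal is correct and matches the paper's intended argument: the paper itself gives no separate proof of Theorem \ref{u-UNm}, stating only that it follows ``by the arguments in the proof of Theorem \ref{main2},'' which is precisely the transfer you carry out. The one ingredient you supply explicitly --- the spectral analogue of Lemma \ref{lm-eht} via diagonalization of $E_{N,\tau}$, Weyl asymptotics $\lambda_k\simeq k^{2/d}$ for the high modes, and scalar rational-approximation bounds for the low modes, together with the trivial stability $\|S_{N,\tau}^m\PP_N x\|\le\|x\|$ --- is exactly what is needed and is stated correctly.
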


\section*{Acknowledgements}

We thank the anonymous referee for very helpful remarks and suggestions.
The first author is partially supported by Hong Kong RGC General Research Fund, No. 16307319, and the UGC Research Infrastructure Grant, No. IRS20SC39. The second author is partially supported by Hong Kong RGC General Research Fund, No. 15325816, and the Hong Kong Polytechnic University Start-up Fund for New Recruits, No. 1-ZE33.

\bibliographystyle{amsplain}
\bibliography{bib}

\providecommand{\bysame}{\leavevmode\hbox to3em{\hrulefill}\thinspace}
\providecommand{\MR}{\relax\ifhmode\unskip\space\fi MR }
\providecommand{\MRhref}[2]{%
  \href{http://www.ams.org/mathscinet-getitem?mr=#1}{#2}
}
\providecommand{\href}[2]{#2}
\begin{thebibliography}{10}

\bibitem{BFRS16(SINUM)}
C.~Bayer, P.~Friz, S.~Riedel, and J.~Schoenmakers, \emph{From rough path
  estimates to multilevel {M}onte {C}arlo}, SIAM J. Numer. Anal. \textbf{54}
  (2016), no.~3, 1449--1483. \MR{3501366}

\bibitem{BGJK17}
S.~Becker, B.~Gess, A.~Jentzen, and P.~Kloeden, \emph{Strong convergence rates
  for explicit space-time discrete numerical approximations of stochastic
  {A}llen--{C}ahn equations}, arXiv:1711.02423.

\bibitem{BJ19(SPA)}
S.~Becker and A.~Jentzen, \emph{Strong convergence rates for
  nonlinearity-truncated {E}uler-type approximations of stochastic
  {G}inzburg-{L}andau equations}, Stochastic Process. Appl. \textbf{129}
  (2019), no.~1, 28--69. \MR{3906990}

\bibitem{BCH19(IMA)}
C.-E. Br\'ehier, J.~Cui, and J.~Hong, \emph{Strong convergence rates of
  semidiscrete splitting approximations for the stochastic {A}llen-{C}ahn
  equation}, IMA J. Numer. Anal. \textbf{39} (2019), no.~4, 2096--2134.
  \MR{4019051}

\bibitem{CHL17(SINUM)}
Y.~Cao, J.~Hong, and Z.~Liu, \emph{Approximating stochastic evolution equations
  with additive white and rough noises}, SIAM J. Numer. Anal. \textbf{55}
  (2017), no.~4, 1958--1981. \MR{3686802}

\bibitem{CH19(SINUM)}
J.~Cui and J.~Hong, \emph{Strong and weak convergence rates of a spatial
  approximation for stochastic partial differential equation with one-sided
  {L}ipschitz coefficient}, SIAM J. Numer. Anal. \textbf{57} (2019), no.~4,
  1815--1841. \MR{3984308}

\bibitem{CHL17(JDE)}
J.~Cui, J.~Hong, and Z.~Liu, \emph{Strong convergence rate of finite difference
  approximations for stochastic cubic {S}chr\"odinger equations}, J.
  Differential Equations \textbf{263} (2017), 3687--3713. \MR{3670034}

\bibitem{CHLZ19(JDE)}
J.~Cui, J.~Hong, Z.~Liu, and W.~Zhou, \emph{Strong convergence rate of
  splitting schemes for stochastic nonlinear {S}chr\"{o}dinger equations}, J.
  Differential Equations \textbf{266} (2019), no.~9, 5625--5663. \MR{3912762}

\bibitem{Dor12(SINUM)}
P.~D{\"o}rsek, \emph{Semigroup splitting and cubature approximations for the
  stochastic {N}avier-{S}tokes equations}, SIAM J. Numer. Anal. \textbf{50}
  (2012), no.~2, 729--746. \MR{2914284}

\bibitem{FLZ17(SINUM)}
X.~Feng, Y.~Li, and Y.~Zhang, \emph{Finite element methods for the stochastic
  {A}llen--{C}ahn equation with gradient-type multiplicative noises}, SIAM J.
  Numer. Anal. \textbf{55} (2017), no.~1, 194--216. \MR{3600370}

\bibitem{GM09(PA)}
I.~Gy\"ongy and A.~Millet, \emph{Rate of convergence of space time
  approximations for stochastic evolution equations}, Potential Anal.
  \textbf{30} (2009), no.~1, 29--64. \MR{2465711}

\bibitem{HMS02(SINUM)}
D.~Higham, X.~Mao, and A.~Stuart, \emph{Strong convergence of {E}uler-type
  methods for nonlinear stochastic differential equation}, SIAM J. Numer. Anal.
  \textbf{40} (2002), no.~3, 1041--1063. \MR{1949404}

\bibitem{HHL19(JDE)}
J.~Hong, C.~Huang, and Z.~Liu, \emph{Optimal regularity of stochastic evolution
  equations in {M}-type 2 {B}anach spaces}, J. Differential Equations
  \textbf{267} (2019), no.~3, 1955--1971. \MR{3945622}

\bibitem{HL19(JDE)}
J.~Hong and Z.~Liu, \emph{Well-posedness and optimal regularity of stochastic
  evolution equations with multiplicative noises}, J. Differential Equations
  \textbf{266} (2019), no.~8, 4712--4745. \MR{3912731}

\bibitem{HJ14}
M.~Hutzenthaler and A.~Jentzen, \emph{On a perturbation theory and on strong
  convergence rates for stochastic ordinary and partial differential equations
  with non-globally monotone coefficients}, to appear at Ann. Probab.
  (arXiv:1401.0295).

\bibitem{HJ11(FCM)}
\bysame, \emph{Convergence of the stochastic {E}uler scheme for locally
  {L}ipschitz coefficients}, Found. Comput. Math. \textbf{11} (2011), no.~6,
  657--706. \MR{2859952}

\bibitem{JR15(FCM)}
A.~Jentzen and M.~R\"ockner, \emph{A {M}ilstein scheme for {SPDE}s}, Found.
  Comput. Math. \textbf{15} (2015), no.~2, 313--362. \MR{3320928}

\bibitem{KLL15(JAP)}
M.~Kov\'acs, S.~Larsson, and F.~Lindgren, \emph{On the backward {E}uler
  approximation of the stochastic {A}llen--{C}ahn equation}, J. Appl. Probab.
  \textbf{52} (2015), no.~2, 323--338. \MR{3372078}

\bibitem{KLL18(MN)}
\bysame, \emph{On the discretisation in time of the stochastic {A}llen-{C}ahn
  equation}, Math. Nachr. \textbf{291} (2018), no.~5-6, 966--995. \MR{3795566}

\bibitem{Kru14(SPDE)}
R.~Kruse, \emph{Consistency and stability of a {M}ilstein-{G}alerkin finite
  element scheme for semilinear {SPDE}}, Stoch. Partial Differ. Equ. Anal.
  Comput. \textbf{2} (2014), no.~4, 471--516. \MR{3274889}

\bibitem{Kru14(IMA)}
\bysame, \emph{Optimal error estimates of {G}alerkin finite element methods for
  stochastic partial differential equations with multiplicative noise}, IMA J.
  Numer. Anal. \textbf{34} (2014), no.~1, 217--251. \MR{3168284}

\bibitem{Kry94(PTRF)}
N.~Krylov, \emph{A {$W^n_2$}-theory of the {D}irichlet problem for {SPDE}s in
  general smooth domains}, Probab. Theory Related Fields \textbf{98} (1994),
  no.~3, 389--421. \MR{1262972}

\bibitem{Liu13(SINUM)}
J.~Liu, \emph{Order of convergence of splitting schemes for both deterministic
  and stochastic nonlinear {S}chr\"odinger equations}, SIAM J. Numer. Anal.
  \textbf{51} (2013), no.~4, 1911--1932. \MR{3072234}

\bibitem{Liu13(JDE)}
W.~Liu, \emph{Well-posedness of stochastic partial differential equations with
  {L}yapunov condition}, J. Differential Equations \textbf{255} (2013), no.~3,
  572--592. \MR{3053478}

\bibitem{LR15}
W.~Liu and M.~R\"ockner, \emph{Stochastic partial differential equations: an
  introduction}, Universitext, Springer, Cham, 2015. \MR{3410409}

\bibitem{LQ19(IJNAM)}
Z.~Liu and Z.~Qiao, \emph{Wong-{Z}akai approximation of stochastic
  {A}llen-{C}ahn equation}, Int. J. Numer. Anal. Model. \textbf{16} (2019),
  no.~5, 681--694. \MR{3946652}

\bibitem{LQ20(IMA)}
\bysame, \emph{Strong approximation of monotone stochastic partial differential
  equations driven by white noise}, IMA J. Numer. Anal. \textbf{40} (2020),
  no.~2, 1074--1093. \MR{4092279}

\bibitem{MP18(CMAM)}
A.~Majee and A.~Prohl, \emph{Optimal strong rates of convergence for a
  space-time discretization of the stochastic {A}llen--{C}ahn equation with
  multiplicative noise}, Comput. Methods Appl. Math. \textbf{18} (2018), no.~2,
  297--311. \MR{3776047}

\bibitem{MR16(EM)}
C.~Marinelli and M.~R\"ockner, \emph{On the maximal inequalities of
  {B}urkholder, {D}avis and {G}undy}, Expo. Math. \textbf{34} (2016), no.~1,
  1--26. \MR{3463679}

\bibitem{MR07(FCM)}
T.~M\"uller-Gronbach and K.~Ritter, \emph{Lower bounds and nonuniform time
  discretization for approximation of stochastic heat equations}, Found.
  Comput. Math. \textbf{7} (2007), no.~2, 135--181. \MR{2324415}

\bibitem{NS20(SPDE)}
Neelima and D.~\v{S}i\v{s}ka, \emph{{$L^p$}-estimates and regularity for
  {SPDE}s with monotone semilinearity}, Stoch. Partial Differ. Equ. Anal.
  Comput. \textbf{8} (2020), no.~2, 422--459. \MR{4098873}

\bibitem{Paz83}
A.~Pazy, \emph{Semigroups of linear operators and applications to partial
  differential equations}, Applied Mathematical Sciences, vol.~44,
  Springer-Verlag, New York, 1983. \MR{710486}

\bibitem{Pro14}
A.~Prohl, \emph{Strong rates of convergence for a space-time discretization of
  the stochastic {A}llen--{C}ahn equation with multiplicative noise}, preprint,
  https://na.uni-tuebingen.de/preprints (2014).

\bibitem{QW19(JSC)}
R.~Qi and X.~Wang, \emph{Optimal error estimates of {G}alerkin finite element
  methods for stochastic {A}llen-{C}ahn equation with additive noise}, J. Sci.
  Comput. \textbf{80} (2019), no.~2, 1171--1194. \MR{3977202}

\bibitem{Wan18}
X.~Wang, \emph{An efficient explicit full discrete scheme for strong
  approximation of stochastic {A}llen--{C}ahn equation}, arXiv:1802.09413.

\end{thebibliography}

\end{document}